\newtheorem{theorem}{Theorem}
\newtheorem{corollary}[theorem]{Corollary}
\newtheorem{example}[theorem]{Example} 
\newtheorem{lemma}[theorem]{Lemma}
\newtheorem{proposition}[theorem]{Proposition}
\newenvironment{proof}[1][Proof]{\textbf{#1.} }{\ \rule{0.5em}{0.5em}} 
\begin{document} 
 
\title{Gaussian and non-Gaussian processes of zero power variation, and related 
stochastic calculus} 
\author{\textsc{Francesco Russo} \thanks{ENSTA-ParisTech. Unit\'{e} de 
Math\'{e}matiques appliqu\'{e}es, 828, bd des Mar\'{e}chaux, F-91120 Palaiseau 
(France)} \thanks{INRIA Rocquencourt Projet MathFi and Cermics Ecole des 
Ponts} \textsc{and}\ \textsc{Frederi VIENS }\thanks{Department of Statistics, 
Purdue University, 150 N. University St., West Lafayette, IN 47907-2067, USA }} 
\maketitle 
 
\begin{abstract} 
We consider a class of stochastic processes $X$ defined by $X\left(  t\right) 
=\int_{0}^{T}G\left(  t,s\right)  dM\left(  s\right)  $ for $t\in\lbrack0,T]$, 
where $M$ is a square-integrable continuous martingale and $G$ is a 
deterministic kernel. Let $m$ be an odd integer. Under the assumption that the 
quadratic variation $\left[  M\right]  $ of $M$ is differentiable with 
$\mathbf{E}\left[  \left\vert d\left[  M\right]  (t)/dt\right\vert 
^{m}\right]  $ finite, it is shown that the $m$th power variation%
\[ 
\lim_{\varepsilon\rightarrow0}\varepsilon^{-1}\int_{0}^{T}ds\left(  X\left( 
s+\varepsilon\right)  -X\left(  s\right)  \right)  ^{m}%
\] 
exists and is zero when a quantity $\delta^{2}\left(  r\right)  $ related to 
the variance of an increment of $M$ over a small interval of length $r$ 
satisfies $\delta\left(  r\right)  =o\left(  r^{1/(2m)}\right)  $. When $M$ is 
the Wiener process, $X$ is Gaussian; the class then includes fractional 
Brownian motion and other Gaussian processes with or without stationary 
increments. When $X$ is Gaussian and has stationary increments, $\delta$ is 
$X$'s univariate canonical metric, and the condition on $\delta$ is proved to 
be necessary. In the non-stationary Gaussian case, when $m=3$, the symmetric 
(generalized Stratonovich) integral is defined, proved to exist, and its 
It\^{o} formula is established for all functions of class $C^{6}$. 
 
\end{abstract} 
 

\textbf{KEY WORDS AND PHRASES}: Power variation, martingale, calculus via 
regularization, Gaussian processes, generalized Stratonovich integral, 
non-Gaussian processes.\bigskip 
 
\textbf{MSC Classification 2000}: 60G07; 60G15; 60G48; 60H05. 
 
\section{Introduction} 
 
The purpose of this article is to study wide classes of processes with zero 
cubic variation, and more generally, zero variation of any odd order. Before 
summarizing our results, we give a brief historical description of the topic 
of $p$-variations, as a basis for our motivations. 
 
\subsection{Historical background} 
 
The $p$-variation of a function $f:[0,T]\rightarrow\mathbf{R}$ is the supremum 
over all the possible partitions $\{0=t_{0}<\ldots<t_{N}=T\}$ of $[0,T]$ of 
the quantity $\sum_{i=0}^{N-1}|f(t_{i+1})-f({t_{i}})|^{p}.$ The analytic 
monograph \cite{bruneau} contains an interesting study on this concept, 
showing that a $p$-variation function is the composition of an increasing 
function and a H\"{o}lder-continuous function. The analytic notion of 
$p$-variation precedes stochastic calculus and processes (see \cite{bruneau}). 
 
It was rediscovered in stochastic analysis in the context of pathwise 
stochastic calculus, starting with $p=2$ as in the fundamental paper 
\cite{foellmer} of H. F\"{o}llmer. Dealings with $p$-variations and their 
stochastic applications, particularly to rough path and other recent 
integration techniques for fractional Brownian motion (fBm) and related 
processes, are described at length for instance in the books 
\cite{dudley-norvaisa}, \cite{FV}, and \cite{lyons-qian}, which also contain 
excellent bibliographies on the subject. Prior to this, power variations could 
be seen as related to oscillations of processes in \cite{AW}, and some 
specific cases had been treated, such as local time processes (see 
\cite{walsh}). 
 
The It\^{o} stochastic calculus for semimartingales defines a \emph{quadratic} 
variation of a semimartingale $S$, instead of its $2$-variation, by taking the 
limit in probability of $\sum_{i=0}^{N-1}|S(t_{i+1})-S({t_{i}})|^{2}$ over the 
smaller set of partitions whose mesh tends to $0$, instead of the supremum 
over all partitions. One defines the quadratic variation $[S]$ of $S$ as the 
limit \textit{in probability} of the expression above 
when the partition mesh goes to $0$, instead of considering pathwise the 
supremum over all partitions, in the hopes of making it more likely to have a 
finite limit; this is indeed the case for standard Brownian motion $M=B$, 
where its 2-variation $\left[  B\right]  $ is a.s. infinite, but its quadratic 
variation is equal to $T$. To reconcile $2$-variations with the finiteness of 
$[B]$, many authors have proposed restricting the supremum over dyadic 
partitions. But there is a fundamental difference between the deterministic 
and stochastic versions of \textquotedblleft variation\textquotedblright, 
since in It\^{o} calculus, quadratic variation is associated with the notion 
of covariation (also known as joint quadratic variation), something which is 
not present in analytic treatments of $2$-variation. The co-variation 
$[S^{1},S^{2}]$ of two semimartingales $S^{1},S^{2}$ is obtained by 
polarization, using again a limit in probability when the partition mesh goes 
to zero. 
 
To work with a general class of processes, the tools of It\^{o} calculus would 
nonetheless restrict the study of covariation to semimartingales. In 
\cite{RV}, the authors enlarged the notion of covariation to general 
processes, in an effort to create a more efficient stochastic calculus tool to 
go beyond semimartingales, by considering regularizations instead of 
discretizations. Drawing some inspiration from the classical fact that a 
continuous $f:[0,T]\rightarrow\mathbf{R}$ has finite variation (1-variation) 
if and only if $\lim_{\varepsilon\rightarrow0}\frac{1}{\varepsilon}\int 
_{0}^{T}|f(s+\varepsilon)-f(s)|ds$ exists, for two processes $X$ and $Y$, 
their covariation $[X,Y]\left(  t\right)  $ is the limit in probability, when 
$\varepsilon$ goes to zero, of 
\begin{equation} 
\left[  X,Y\right]  _{\varepsilon}\left(  t\right)  =\varepsilon^{-1}%
{\textstyle\int_{0}^{t}} 
\big(X(s+\varepsilon)-X(s)\big)\big(Y(s+\varepsilon)-Y(s)\big)ds;\quad t\geq0. 
\label{SIVR1Cn}%
\end{equation} 
$[X,Y]$ coincides with the classical covariation for continuous 
semimartingales. The processes $X$ such that $[X,X]$ exists are called finite 
quadratic variation processes, and were analyzed in \cite{flandoli-russo00, 
RV00}. 

The notion of covariation was extended in \cite{RE} to more than two 
processes: the $n$-covariation $[X^{1},X^{2},\cdots,X^{n}]$ of $n$ processes 
$X^{1},\ldots,X^{n}$ is as in formula (\ref{SIVR1Cn}), but with a product of 
$n$ increments, with specific analyses for $n=4$ for fBm with 
\textquotedblleft Hurst\textquotedblright\ parameter $H=1/4$ in \cite{GRV}. If 
$X=X^{1}=X^{2}=X^{3}$ we denote $[X;3]:=[X,X,X]$, which is called the 
\emph{cubic variation}, and is one of the main topics of investigation in our 
article. This variation is the limit in probability of 
\begin{equation} 
\label{X3def}\lbrack X,3]_{\varepsilon}\left(  t\right)  :=\varepsilon 
^{-1}{\textstyle\int_{0}^{t}} \left(  X\left(  s+\varepsilon\right)  -X\left( 
s\right)  \right)  ^{3}ds, 
\end{equation} 
when $\varepsilon\rightarrow0$. \eqref{X3def} involves the signed cubes 
$(X\left(  s+\varepsilon\right)  - X(s))^{3}$, which has the same sign as the 
increment $X\left(  s+\varepsilon\right)  -X(s)$, unlike the case of quadratic 
or $2$-variation, or of the so-called \emph{strong} cubic variation, where 
absolute values are used inside the cube function. Consider the case where $X$ 
is a fBm $B^{H}$ with Hurst parameter $H\in\left(  0,1\right)  $. For fBm, 
\cite{GNRV} establish that $[X,3]\equiv0$ if $H>1/6$ and $[X,3]$ does not 
exist if $H<1/6$, while for $H=1/6$, the regularization approximation $\left[ 
X,3\right]  _{\varepsilon}\left(  t\right)  $ converges in law to a normal law 
for every $t>0$. This phenomenon was confirmed for the related 
finite-difference approximating sequence of $[X,3]\left(  t\right)  $ which 
also converges in law to a Gaussian variable; this was proved in \cite[Theorem 
10]{NOT} by using the the so-called Breuer-Major central limit theorem for 
stationary Gaussian sequences \cite{BM}. 

A practical significance of the cubic variation lies in its well-known ability 
to guarantee the existence of (generalized symmetric) Stratonovich integrals, 
and their associated It\^{o}-Stratonovich formula, 
for various highly irregular processes. This was established in \cite{GNRV} in 
significant generality; technical conditions therein were proved to apply to 
fBm with $H>1/6$, and can extend to similar Gaussian cases with canonical 
metrics that are bounded above and below by multiples of the fBm's, for 
instance the bi-fractional Brownian motion treated in \cite{TR}. A variant on 
\cite{GNRV}'s It\^{o} formula was established previously in \cite{RE} for less 
irregular processes: if $X$ (not necessarily Gaussian) has a finite strong 
cubic variation, so that $[X,3]$ exists (but may not be zero), for $f\in 
C^{3}\left(  \mathbf{R}\right)  $, $f(X_{t})=f(X_{0})+\int_{0}^{t}f^{\prime 
}(X_{s})d^{\circ}X-\frac{1}{12}\int_{0}^{t}f^{\prime\prime\prime}%
(X_{s})d[X,3]\left(  s\right)  $, which involves the symmetric-Stratonovich 
integral of \cite{RV2}, and a Lebesgue-Stieltjes integral. In \cite{NRS}, an 
analogous formula is obtained for fBm with $H=1/6$, but in the sense of 
distribution laws only: $\int_{0}^{t}f^{\prime}(X_{s})d^{\circ}X$ exist only 
in law, and $\int_{0}^{t}f^{\prime\prime\prime}(X_{s})d[X,3]\left(  s\right) 
$ is replaced by a conditionally Wiener integral defined in law by replacing 
$[X,3]$ with a term $\kappa W$, where $W$ is the independent Wiener process 
identified in \cite{NOT}. 
 
\subsection{Specific motivations} 
 
Our work herein is motivated by the properties described in the previous 
paragraph, particularly as in \cite{GNRV}. We want to avoid situations where 
It\^{o} formulas can only be established in law, i.e. involving conditionally 
Wiener integrals defined as limits in a weak sense. Thus we study scales where 
this term vanishes in a strong sense, while staying as close to the threshold 
$H=1/6$ as possible. Other types of stochastic integrals for fBm and related 
irregular Gaussian processes make use of the Skorohod integral, identified as 
a divergence operator on Wiener space (see \cite{Nbook} and also \cite{alos, 
biagini, carmona, MV, kruk}), and rough path theory (see \cite{FV, 
lyons-qian}). The former method is not restrictive in how small $H$ can be 
(see \cite{MV}), but is known not to represent a pathwise notion of integral; 
the latter is based in a true pathwise strategy 
and it is based on giving a L\'{e}vy-type area or iterated integrals \emph{a 
priori}. In principal the objective of the rough path approach is not to link 
any discretization (or other approximation) scheme. These provide additional 
motivations for studying the regularlization methodology of \cite{RV} or 
\cite{RV2}, which does not feature these drawbacks for $H>1/6$. 
 
We come back to the cubic variation approximation $[X,3]$ defined via the 
limit of \eqref{X3def}. 
The reasons for which $\left[  X,3\right]  =0$ for fBm with $H>1/6$, which is 
considerably less regular than the threshold $H>1/3$ one has for 
$H$-H\"{o}lder-continuous deterministic functions, are the odd symmetry of the 
cube function, and the accompanying probabilistic symmetries of the process 
$X$ itself (e.g. Gaussian property). This doubling improvement over the 
deterministic case does not typically hold for non-symmetric variations: $H$ 
needs to be larger to guarantee existence of the variation; for instance, when 
$X$ is fBm, its strong cubic variation, defined as the limit in probability of 
$\varepsilon^{-1}\int_{0}^{t}\left\vert X\left(  s+\varepsilon\right) 
-X\left(  s\right)  \right\vert ^{3}ds$, exists for $H\geq1/3$ only. 
 
Finally, some brief notes in the case where $X$ is fBm with $H=1/6$. This 
threshold is a critical value since, as mentioned above, whether in the sense 
of regularization or of finite-difference, the approximating sequences of 
$[X,3]\left(  t\right)  $ converge in law to Gaussian laws. In contrast to 
these normal convergences, in our article, we show as a preliminary result 
(Proposition \ref{ChaosConv} herein), that $[X,3]_{\varepsilon}$ does not 
converge in probability for $H=1/6$; the non-convergence of 
$[X,3]_{\varepsilon}$ in probability for $H<1/6$ was known previously. 
 
\subsection{Summary of results and structure of article} 
 
This article investigates the properties of cubic and other odd power 
variations for processes which may not be self-similar, or have stationary 
increments, or be Gaussian, when they have $\alpha$-H\"{o}lder-continuous 
paths; this helps answer to what extent the threshold $\alpha>1/6$ is sharp 
for $[X,3]=0$. We consider processes $X$ defined on $[0,T]$ by a Volterra 
representation%
\begin{equation} 
X\left(  t\right)  =\int_{0}^{T}G\left(  t,s\right)  dM\left(  s\right)  , 
\label{defX}%
\end{equation} 
where $M$ is a square-integrable martingale on $[0,T]$, and $G$ is a 
non-random measurable function on $[0,T]^{2}$, which is square-integrable in 
$s$ with respect to $d\left[  M\right]  _{s}$ for every fixed $t$. The 
quadratic variations of these martingale-based convolutions was studied in 
\cite{errami-russoCRAS}. The \textquotedblleft Gaussian\textquotedblright%
\ case results when $M$ is the standard Wiener process (Brownian motion) $W$. 
 
In this paper, we concentrate on processes $X$ which are not more regular than 
standard Brownian motion; this irregularity is expressed via a concavity 
condition on the squared canonical metric $\delta^{2}\left(  s,t\right) 
=\mathbf{E}\left[  \left(  X\left(  t\right)  -X\left(  s\right)  ^{2}\right) 
\right]  $. It is not a restriction since the main interest of our results 
occurs around the H\"{o}lder exponent $1/(2m)$ for odd $m\geq3$, and processes 
which are more regular than Brownian motion can be treated using classical 
non-probabilistic tools such as the Young integral. 
 
After providing some definitions [Section \ref{DEF}], our first main finding 
is that the processes with zero odd $m$th variation (same definition as for 
$[X,3]=0$ in (\ref{X3def}) but with $m$ replacing $3$) are those which are 
better than $1/(2m)$-H\"{o}lder-continuous in the $L^{2}\left(  \Omega\right) 
$-sense, whether for Gaussian processes [Section \ref{GAUSS}], or non-Gaussian 
ones [Section \ref{NGC}]. Specifically, 
 
\begin{itemize} 
\item for $X$ \emph{Gaussian with stationary increments} (i.e. $\delta\left( 
s,t\right)  =\delta\left(  t-s\right)  $), for any odd integer $m\geq3$, 
$\left[  X,m\right]  =0$ if and only if $\delta\left(  r\right)  =o\left( 
r^{1/\left(  2m\right)  }\right)  $ for $r$ near $0$ [Theorem \ref{HomogGauss} 
on page \pageref{HomogGauss}]; 
 
\item for $X$ \emph{Gaussian} but not necessarily with stationary increments, 
for any odd integer $m\geq3$, $\left[  X,m\right]  =0$ if $\delta^{2}\left( 
s,s+r\right)  =o\left(  r^{1/(2m)}\right)  $ for $r$ near $0$ uniformly in 
$s$. [Theorem \ref{nonhomoggauss} on page \pageref{nonhomoggauss}; this holds 
under a technical non-explosion condition on the mixed partial derivative of 
$\delta^{2}$ near the diagonal; see Examples \ref{RL} and \ref{RLgen} on page 
\pageref{RL} for a wide class of Volterra-convolution-type Gaussian processes 
with non-stationary increments which satisfy the condition]. 
 
\item for $X$ \emph{non-Gaussian} based on a martingale $M,$ for any odd 
integer $m\geq3$, with $\Gamma\left(  t\right)  :=\left(  \mathbf{E}\left[ 
\left(  d\left[  M\right]  /dt\right)  ^{m}\right]  \right)  ^{1/(2m)}$ if it 
exists, we let $Z\left(  t\right)  :=\int_{0}^{T}\Gamma\left(  s\right) 
G\left(  t,s\right)  dW\left(  s\right)  $. This $Z$ is a Gaussian process; if 
it satisfies the conditions of Theorem \ref{HomogGauss} or Theorem 
\ref{nonhomoggauss}, then $\left[  X,m\right]  =0$. [Section \ref{NGC}, 
Theorem \ref{MartThm} on page \pageref{MartThm}; Proposition \ref{Ex} on page 
\pageref{Ex} provides examples of wide classes of martingales and kernels for 
which the assumptions of Theorem \ref{MartThm} are satisfied, with details on 
how to construct examples and study their regularity properties on page 
\ref{pex}.]. 
\end{itemize} 
 
Our results shows how broad a class of processes, based on martingale 
convolutions with only $m$ moments, one can construct which have zero odd 
$m$th variation, under conditions which are the same in terms of regularity as 
in the case of Gaussian processes with stationary increments, where we prove 
sharpness. Note that $X$ itself can be far from having the martingale 
property, just as it is generally far from standard Brownian motion in the 
Gaussian case. Our second main result is an application to weighted 
variations, generalized Stratonovich integration, and an It\^{o} formula 
[Section \ref{STOCH}.] 
 
\begin{itemize} 
\item Under the conditions of Theorem \ref{nonhomoggauss} (general Gaussian 
case), and an additional coercivity condition, for every bounded measurable 
function $g$ on $\mathbf{R}$, 
\[ 
\lim_{\varepsilon\rightarrow0}\frac{1}{\varepsilon^{2}}\mathbf{E}\left[ 
\left(  \int_{0}^{T}du\left(  X_{u+\varepsilon}-X_{u}\right)  ^{m}g\left( 
\frac{X_{u+\varepsilon}+X_{u}}{2}\right)  \right)  ^{2}\right]  =0. 
\] 
If $m=3$, by results in \cite{GNRV}, Theorem \ref{ForItoThm} implies that for 
any $f\in C^{6}\left(  \mathbf{R}\right)  $ and $t\in\lbrack0,T]$, the It\^{o} 
formula $f\left(  X_{t}\right)  =f\left(  X_{0}\right)  +\int_{0}^{t}%
f^{\prime}\left(  X_{u}\right)  d^{\circ}X_{u}$ holds, where the integral is 
in the symmetric (generalized Stratonovich) sense. [Theorem \ref{ForItoThm} 
and its Corollary \ref{coroll}, on page \pageref{coroll}.] 
\end{itemize} 
 
Most of the proofs of our theorems are relegated to the Appendix [Section 
\ref{APP}]. 
 
\subsection{Relation with other recent work} 
 
The authors of the paper \cite{HNS} consider, as we do, stochastic processes 
which can be written as Volterra integrals with respect to martingales. Their 
\textquotedblleft fractional martingale\textquotedblright, which generalizes 
Riemann-Liouville fBm, is a special case of the processes we consider in 
Section \ref{NGC}, with $K\left(  t,s\right)  =\left(  t-s\right)  ^{H-1/2}$. 
The authors' motivation is to prove an analogue of the famous L\'evy 
characterization of Brownian motion as the only continuous square-integrable 
martingale with a quadratic variation equal to $t$. They provide similar 
necessary and sufficient conditions based on the $1/H$-variation for a process 
to be fBm. This is a different aspect of the theory than our motivation to 
study necessary and sufficient conditions for a process to have vanishing 
(odd) cubic variation, and its relation to stochastic calculus. The value 
$H=1/6$ is mentioned in the context of the stochastic heat equation driven by 
space-time white-noise, in which discrete trapezoidal sums converge in 
distribution (not in probability) to a conditionally independent Brownian 
motion: see \cite{BS} and \cite{NOT}. 
 
To find a similar motivation to ours, one may look at the recent result of 
\cite{NNT}, where the authors study the central and non-central behavior of 
weighted Hermite variations for fBm. Using the Hermite polynomial of order $m$ 
rather than the power-$m$ function, they show that the threshold value 
$H=1/\left(  2m\right)  $ poses an interesting open problem, since above this 
threshold (but below $H=1-1/\left(  2m\right)  $) one obtains Gaussian limits 
(these limits are conditionally Gaussian when weights are present, and can be 
represented as Wiener integrals with respect to an independent Brownian 
motion), while below the threshold, degeneracy occurs. The behavior at the 
threshold was worked out for $H=1/4,m=2$ in \cite{NNT}, boasting an exotic 
correction term with an independent Brownian motion, while the general open 
problem of Hermite variations with $H=1/\left(  2m\right)  $ was settled in 
\cite{NN}. More questions arise, for instance, with a similar result in 
\cite{N2} for $H=1/4$, but this time with bidimensional fBm, in which two 
independent Brownian motions are needed to characterize the exotic correction 
term. Compared to the above works, our work situates itself by 
 
\begin{itemize} 
\item establishing necessary and sufficient conditions for nullity of the 
$m$th power variation, around the threshold regularity value $H=1/(2m)$, for 
general Gaussian processes with stationary increments, showing in particular 
that self-similarity is not related to this nullity, and that the result works 
for all odd integers, thanks only to the problem's symmetries; 
 
\item showing that our method is able to consider processes that are far from 
Gaussian and still yield sharp sufficient conditions for nullity of odd power 
variations, since our base noise may be a generic martingale with only a few 
moments; our ability to prove an It\^{o} formula for such processes attests to 
our method's power. 
\end{itemize} 
 
\section{Definitions\label{DEF}} 
 
We recall our process $X$ defined for all $t\in\lbrack0,T]$ by (\ref{defX}). 
For any integer $m\geq2$, let the \emph{odd }$\varepsilon$\emph{-}$m$\emph{-th 
variation} of $X$ be defined by%
\begin{equation} 
\lbrack X,m]_{\varepsilon}\left(  T\right)  :=\frac{1}{\varepsilon}\int 
_{0}^{T}ds\left\vert X\left(  s+\varepsilon\right)  -X\left(  s\right) 
\right\vert ^{m}\mbox{sgn}\left(  X\left(  s+\varepsilon\right)  -X\left( 
s\right)  \right)  . \label{defXm}%
\end{equation} 
The odd variation is different from the absolute (or strong) variation because 
of the presence of the sign function, making the function $\left\vert 
x\right\vert ^{m}\mbox{sgn}\left(  x\right)  $ an odd function. In the sequel, 
in order to lighten the notation, we will write $\left(  x\right)  ^{m}$ for 
$\left\vert x\right\vert ^{m}\mbox{sgn}\left(  x\right)  $. We say that $X$ 
has \emph{zero odd }$m$\emph{-th variation} (in the mean-squared sense) if the 
limit $\lim_{\varepsilon\rightarrow0}[X,m]_{\varepsilon}\left(  T\right)  =0$ 
holds in $L^{2}\left(  \Omega\right)  $. 
 
The \emph{canonical metric} $\delta$ of a stochastic process $X$ is defined as 
the pseudo-metric on $[0,T]^{2}$ given by $\delta^{2}\left(  s,t\right) 
=\mathbf{E}\left[  \left(  X\left(  t\right)  -X\left(  s\right)  \right) 
^{2}\right]  $. The \emph{covariance function} of $X$ is defined by $Q\left( 
s,t\right)  =\mathbf{E}\left[  X\left(  t\right)  X\left(  s\right)  \right] 
$. The special case of a centered Gaussian process is of primary importance; 
then the process's entire distribution is characterized by $Q$, or alternately 
by $\delta$ and the variances $var\left(  X\left(  t\right)  \right) 
=Q\left(  t,t\right)  $, since we have $Q\left(  s,t\right)  =\frac{1}%
{2}\left(  Q\left(  s,s\right)  +Q\left(  t,t\right)  -\delta^{2}\left( 
s,t\right)  \right)  $. We say that $\delta$ has \emph{stationary increments} 
if there exists a function on $[0,T]$ which we also denote by $\delta$ such 
that $\delta\left(  s,t\right)  =\delta\left(  \left\vert t-s\right\vert 
\right)  $. Below, we will refer to this situation as the \emph{stationary 
case}. This is in contrast to usual usage of this appellation, which is 
stronger, since for example in the Gaussian case, it refers to the fact that 
$Q\left(  s,t\right)  $ depends only on the difference $s-t$; this would not 
apply to, say, standard or fBm, while our definition does. In non-Gaussian 
settings, the usual way to interpret the \textquotedblleft 
stationary\textquotedblright\ property is to require that the processes 
$X\left(  t+\cdot\right)  $ and $X\left(  \cdot\right)  $ have the same law, 
which is typically much more restrictive than our definition. 
 
The goal of the next two sections is to define various general conditions 
under which a characterization of $\lim_{\varepsilon\rightarrow0}%
[X,m]_{\varepsilon}\left(  T\right)  =0$ can be established. In particular, we 
aim to show that $X$ has zero odd $m$-th variation for well-behaved $M$'s and 
$G$'s if -- and in some cases only if --%
\begin{equation} 
\delta\left(  s,t\right)  =o(\left\vert t-s\right\vert ^{1/\left(  2m\right) 
}). \label{condelta}%
\end{equation}

\section{Gaussian case\label{GAUSS}} 
 
We assume that $X$ is centered Gaussian. Then we can write $X$ as in formula 
(\ref{defX}) with $M=W$ a standard Brownian motion. We have the following 
elementary result. 
 
\begin{lemma} 
\label{lemma1}If $m$ is an odd integer $\geq3$, we have $\mathbf{E}\left[ 
\left(  [X,m]_{\varepsilon}\left(  T\right)  \right)  ^{2}\right]  =\sum 
_{j=0}^{\left(  m-1\right)  /2}J_{j}$ where%
\[ 
J_{j}:=\frac{1}{\varepsilon^{2}}\sum_{j=0}^{\left(  m-1\right)  /2}c_{j}%
\int_{0}^{T}\int_{0}^{t}dtds\Theta^{\varepsilon}\left(  s,t\right) 
^{m-2j}\ Var\left[  X\left(  t+\varepsilon\right)  -X\left(  t\right) 
\right]  ^{j}\ Var\left[  X\left(  s+\varepsilon\right)  -X\left(  s\right) 
\right]  ^{j}, 
\] 
the $c_{j}$'s are positive constants depending only on $j$, and 
\[ 
\Theta^{\varepsilon}\left(  s,t\right)  :=\mathbf{E}\left[  \left(  X\left( 
t+\varepsilon\right)  -X\left(  t\right)  \right)  \left(  X\left( 
s+\varepsilon\right)  -X\left(  s\right)  \right)  \right]  . 
\] 
 
\end{lemma}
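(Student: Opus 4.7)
The plan is to expand the square of the odd $\varepsilon$-$m$-th variation as a double integral and then to apply Wick's (Isserlis') theorem to the jointly Gaussian pair of increments that appears inside. Note first that because $m$ is odd, $(x)^m = |x|^m \mbox{sgn}(x) = x^m$ in the ordinary sense, so there are no sign issues to worry about and every quantity we handle is a polynomial in Gaussian variables.

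First I would write
\[
\left([X,m]_\varepsilon(T)\right)^2 = \frac{1}{\varepsilon^2}\int_0^T\int_0^T ds\,dt\,\bigl(X(t+\varepsilon)-X(t)\bigr)^m\bigl(X(s+\varepsilon)-X(s)\bigr)^m,
\]
apply Fubini to push the expectation inside, and use symmetry in $(s,t)$ to reduce the integration region to $\{s<t\}$ at the cost of a factor of $2$. Fix such a pair $(s,t)$ and set $U := X(t+\varepsilon)-X(t)$, $V := X(s+\varepsilon)-X(s)$; these form a centered Gaussian vector with $\mathrm{Var}(U)$, $\mathrm{Var}(V)$, and covariance $\Theta^\varepsilon(s,t)$.

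The heart of the argument is the identity, for centered jointly Gaussian $(U,V)$ and odd $m$,
\[
\mathbf{E}[U^m V^m] = \sum_{j=0}^{(m-1)/2} c_j\,\bigl(\mathbf{E}[UV]\bigr)^{m-2j}\bigl(\mathrm{Var}\,U\bigr)^{j}\bigl(\mathrm{Var}\,V\bigr)^{j},
\]
which I would derive by Isserlis/Wick: $\mathbf{E}[U^mV^m]$ equals the sum over all complete pairings of the $2m$ labelled variables (the $m$ copies of $U$ and $m$ copies of $V$) of the product of covariances of paired entries. Classify pairings by the number $k$ of mixed $U$-$V$ pairs; then there are $(m-k)/2$ pure $U$-pairs and $(m-k)/2$ pure $V$-pairs, so $k$ must have the parity of $m$, namely odd, and one sets $k=m-2j$ with $j\in\{0,1,\dots,(m-1)/2\}$. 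A direct count of pairings of each type yields
\[
c_j = \binom{m}{m-2j}^{\!2}(m-2j)!\left(\frac{(2j)!}{2^{j}\,j!}\right)^{\!2},
\]
which is a positive constant depending only on $j$ (and $m$). Substituting this expansion back, interchanging summation and the $ds\,dt$ integration, and renaming the dummy variable yields exactly the stated formula $\sum_j J_j$.

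The only delicate point is the combinatorial bookkeeping of the Wick pairings that produces the constants $c_j$ with the correct exponents on $\Theta^\varepsilon$ and on the two variances; everything else is Fubini plus the fact that for odd $m$ the signed power $(x)^m$ is the ordinary power $x^m$, so Gaussian moment formulas apply without modification. Once the pairing count is in place, the remainder of the lemma is simply a matter of collecting like terms.
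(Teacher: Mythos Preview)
Your proof is correct and follows the same overall structure as the paper's: expand the square as a double integral, apply Fubini, and reduce to the Gaussian moment identity $\mathbf{E}[U^mV^m]=\sum_j c_j(\mathbf{E}[UV])^{m-2j}(\mathrm{Var}\,U)^j(\mathrm{Var}\,V)^j$ for a centered Gaussian pair. The only genuine difference is in how that identity (and the positivity of the $c_j$) is obtained. You derive it directly from the Isserlis/Wick pairing expansion, classifying pairings by the number of mixed $U$--$V$ pairs and counting to get the explicit combinatorial expression for $c_j$, which is manifestly positive. The paper instead cites the identity from Lemma~5.2 of \cite{GNRV} and proves positivity by expanding $x^m$ in the basis of odd Hermite polynomials, $x^m=\sum_{j}a_{2j+1}H_{2j+1}(x)$, and using orthogonality of Hermite polynomials of jointly Gaussian variables to conclude $c_j=(a_{2j+1})^2\ge 0$. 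Your combinatorial route is self-contained and yields an explicit formula for the constants; the paper's Hermite argument is shorter once the cited lemma is granted and makes the positivity transparent as a square. Either approach is perfectly adequate here.
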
 
 
Using $Q$ and $\delta$, $\Theta^{\varepsilon}\left(  s,t\right)  $ computes as 
the opposite of the planar increment of the canonical metric over the 
rectangle defined by its corners $\left(  s,t\right)  $ and $\left( 
s+\varepsilon,t+\varepsilon\right)  $:%
\begin{equation} 
\Theta^{\varepsilon}\left(  s,t\right)  =\frac{1}{2}\left[  -\delta^{2}\left( 
t+\varepsilon,s+\varepsilon\right)  +\delta^{2}\left(  t,s+\varepsilon\right) 
+\delta^{2}\left(  s,t+\varepsilon\right)  -\delta^{2}\left(  s,t\right) 
\right]  =:-\frac{1}{2}\Delta_{\left(  s,t\right)  ;\left(  s+\varepsilon 
,t+\varepsilon\right)  }\delta^{2}. \label{defDelta}%
\end{equation}

\subsection{The case of critical fBm} 
 
Before finding sufficient and possibly necessary conditions for various 
Gaussian processes to have zero cubic (or $m$th) variation, we discuss the 
threshold case for the cubic variation of fBm. Recall that when $X$ is fBm 
with parameter $H=1/6$, as mentioned in the Introduction, it is known from 
\cite[Theorem 4.1 part (2)]{GNRV} that $[X,3]_{\varepsilon}\left(  T\right)  $ 
converges in distribution to a non-degenerate normal law. However, there does 
not seem to be any place in the literature specifying whether the convergence 
may be any stronger than in distribution. We address this issue here. 
 
\begin{proposition} 
\label{ChaosConv}Let $X$ be an fBm with Hurst parameter $H=1/6$. Then $X$ does 
not have a cubic variation (in the mean-square sense), by which we mean that 
$[X,3]_{\varepsilon}\left(  T\right)  $ has no limit in $L^{2}\left( 
\Omega\right)  $ as $\varepsilon\rightarrow0$. In fact more is true: 
$[X,3]_{\varepsilon}\left(  T\right)  $ has no limit in probability as 
$\varepsilon\rightarrow0$. 
\end{proposition}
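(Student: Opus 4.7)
My plan is to argue by contradiction in two stages: first, upgrade any hypothetical convergence in probability to $L^{2}(\Omega)$ convergence via hypercontractivity, then rule out the latter by an explicit second-moment computation.

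Decompose $F_{\varepsilon}:=[X,3]_{\varepsilon}(T)$ via the Hermite identity $x^{3}=H_{3}(x)+3x$ applied to $X(s+\varepsilon)-X(s)$, whose variance is $\varepsilon^{1/3}$. This yields $F_{\varepsilon}=F_{\varepsilon}^{(3)}+F_{\varepsilon}^{(1)}$ with $F_{\varepsilon}^{(1)}=3\varepsilon^{-2/3}\int_{0}^{T}[X(s+\varepsilon)-X(s)]\,ds$ in the first Wiener chaos of the driving Brownian motion, and $F_{\varepsilon}^{(3)}$ in the third. Since $\mathbf{E}[F_{\varepsilon}^{2}]\to\sigma^{2}\in(0,\infty)$ by \cite[Theorem 4.1(2)]{GNRV}, Nelson's hypercontractivity on these two fixed chaoses yields uniform $L^{p}(\Omega)$-boundedness of $F_{\varepsilon}$ for every $p<\infty$, hence uniform integrability. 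Any limit in probability would therefore be an $L^{2}$-limit, so it suffices to show $\{F_{\varepsilon}\}$ is not $L^{2}$-Cauchy. The first-chaos contribution is negligible: the telescoping $\int_{0}^{T}[X(s+\varepsilon)-X(s)]\,ds=\int_{T}^{T+\varepsilon}X(u)\,du-\int_{0}^{\varepsilon}X(u)\,du$ has $L^{2}$-norm $O(\varepsilon)$ by the $1/6$-regularity of fBm, giving $\|F_{\varepsilon}^{(1)}\|_{L^{2}}=O(\varepsilon^{1/3})\to 0$, so we may work with $F_{\varepsilon}^{(3)}$ alone.

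For jointly centred Gaussian $(\xi,\eta)$ with variances $\sigma_{\xi}^{2},\sigma_{\eta}^{2}$ and covariance $c$, Isserlis' formula gives $\mathbf{E}[\xi^{3}\eta^{3}]=9c\sigma_{\xi}^{2}\sigma_{\eta}^{2}+6c^{3}$; only the $6c^{3}$ piece lies in the third chaos, so with $\Theta$ as in \eqref{defDelta},
\[
\mathbf{E}\bigl[F_{\varepsilon}^{(3)}F_{\varepsilon'}^{(3)}\bigr]=\frac{6}{\varepsilon\varepsilon'}\int_{0}^{T}\!\!\int_{0}^{T}\Theta(s,t;\varepsilon,\varepsilon')^{3}\,ds\,dt.
\]
Setting $\varepsilon'=\lambda\varepsilon$ with $\lambda>0$ fixed and substituting $s=\varepsilon u$, $t=\varepsilon v$, the self-similarity of fBm at $H=1/6$ gives $\Theta(\varepsilon u,\varepsilon v;\varepsilon,\lambda\varepsilon)=\varepsilon^{1/3}\theta_{\lambda}(v-u)$ with
\[
\theta_{\lambda}(w)=\tfrac{1}{2}\bigl[|w-1|^{1/3}+|w+\lambda|^{1/3}-|w+\lambda-1|^{1/3}-|w|^{1/3}\bigr].
\]
A Taylor expansion at infinity yields $\theta_{\lambda}(w)=O(|w|^{-5/3})$ (it is a mixed second-order difference of $|\cdot|^{1/3}$), so $\theta_{\lambda}^{3}\in L^{1}(\mathbf{R})$ and a standard Riemann-sum argument produces $K(\lambda):=\lim_{\varepsilon\to 0}\mathbf{E}\bigl[F_{\varepsilon}^{(3)}F_{\lambda\varepsilon}^{(3)}\bigr]=\tfrac{6T}{\lambda}\int_{\mathbf{R}}\theta_{\lambda}(w)^{3}\,dw$.

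If $F_{\varepsilon}$ were $L^{2}$-Cauchy then necessarily $K(\lambda)\equiv K(1)=\sigma^{2}>0$. The main obstacle is to rule this out by proving non-constancy of $K$. A clean route is to let $\lambda\to 0^{+}$: a first-order Taylor expansion shows $\theta_{\lambda}(w)=O(\lambda)$ pointwise, so $\lambda^{-1}\theta_{\lambda}^{3}=O(\lambda^{2})$ and dominated convergence gives $K(\lambda)\to 0$, contradicting constancy since $K(1)>0$. Finally, picking any $\lambda_{0}>0$ with $K(\lambda_{0})<\sigma^{2}$ and any $\varepsilon_{n}\to 0$ yields $\mathbf{E}[(F_{\varepsilon_{n}}-F_{\lambda_{0}\varepsilon_{n}})^{2}]\to 2\sigma^{2}-2K(\lambda_{0})>0$, contradicting $L^{2}$-Cauchy-ness and therefore ruling out convergence in probability. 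The most delicate technical points lie in justifying the Riemann-sum limit and handling the mild singularities of $\theta_{\lambda}$ at $w\in\{0,1,-\lambda,1-\lambda\}$, together with the dominated-convergence bound as $\lambda\to 0^{+}$.
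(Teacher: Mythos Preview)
Your proof is correct and shares with the paper the same overall architecture: the chaos decomposition $F_{\varepsilon}=\mathcal{I}_{1}+\mathcal{I}_{3}$, the observation that $\mathcal{I}_{1}\to 0$ in $L^{2}(\Omega)$ via the telescoping identity, and the hypercontractivity/uniform-integrability step upgrading any hypothetical convergence in probability to $L^{2}$-convergence. Where you genuinely diverge from the paper is in ruling out $L^{2}$-convergence of the third-chaos part. The paper argues indirectly: it invokes the full distributional limit from \cite{GNRV} to say that $\mathcal{I}_{3}$ converges in law to a non-degenerate normal, and then notes that any $L^{2}$-limit of $\mathcal{I}_{3}$ would have to remain in the (closed) third Wiener chaos, which contains no non-trivial Gaussian element --- a contradiction. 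You instead compute the cross-moments $\mathbf{E}[\mathcal{I}_{3}(\varepsilon)\mathcal{I}_{3}(\lambda\varepsilon)]$ explicitly, exploit self-similarity to reduce to the scaling function $K(\lambda)=\tfrac{6T}{\lambda}\int_{\mathbf{R}}\theta_{\lambda}^{3}$, and show $K$ is non-constant by sending $\lambda\to 0^{+}$, thereby defeating the Cauchy criterion directly. Your route is more computational but also more self-contained: from \cite{GNRV} you use only the positivity of $K(1)=\sigma^{2}$, not the convergence in law itself. The paper's route is slicker, trading computation for the structural fact about non-Gaussianity of higher chaoses, at the price of importing the full CLT. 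One small caution on your side: near $w\in\{0,1\}$ the quantity $\lambda^{-1}\theta_{\lambda}(w)^{3}$ does not admit a single $\lambda$-uniform integrable majorant (the naive bound $\lambda^{2}|w|^{-2}$ is non-integrable at $0$); the clean fix is the $\lambda$-dependent splitting $|w|\lessgtr\lambda$, combining the H\"older bound $|\theta_{\lambda}|\leq C\lambda^{1/3}$ on $|w|<\lambda$ with the mean-value bound on $|w|>\lambda$, which you rightly flag among the delicate points.
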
 
 
In order to prove the proposition, we study the Wiener chaos representation 
and moments of $[X,3]_{\varepsilon}\left(  T\right)  $ when $X$ is fBm; $X$ is 
given by (\ref{defX}) where $W$ is Brownian motion and the kernel $G$ is 
well-known (see Chapters 1 and 5 of the textbook \cite{Nbook}). 
 
\begin{lemma} 
\label{X3eExact}Fix $\varepsilon>0$. Let $\Delta G_{s}\left(  u\right) 
:=G\left(  s+\varepsilon,u\right)  -G\left(  s,u\right)  $. Then 
$[X,3]_{\varepsilon}\left(  T\right)  =\mathcal{I}_{1}+\mathcal{I}_{3}$ where%
\begin{align} 
\mathcal{I}_{1}  &  :=\frac{3}{\varepsilon}\int_{0}^{T}ds\int_{0}^{T}\Delta 
G_{s}\left(  u\right)  dW\left(  u\right)  \left(  \int_{0}^{T}\left\vert 
\Delta G_{s}\left(  v\right)  \right\vert ^{2}dv\right)  ,\label{firstchaos}\\ 
\mathcal{I}_{3}  &  :=\frac{6}{\varepsilon}\int_{0}^{T}dW\left(  s_{3}\right) 
\int_{0}^{s_{3}}dW\left(  s_{2}\right)  \int_{0}^{s_{2}}dW\left( 
s_{1}\right)  \int_{0}^{T}\left[  \prod_{k=1}^{3}\Delta G_{s}\left( 
s_{k}\right)  \right]  ds. \label{thirdchaos}%
\end{align} 
 
\end{lemma}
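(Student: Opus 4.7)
The plan is to use the Hermite/Wiener chaos decomposition of the cube of a centered Gaussian random variable, applied pointwise in $s$ to the increment $X(s+\varepsilon)-X(s)$, and then integrate over $s$.

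First, I would observe that, since $X(t)=\int_0^T G(t,u)\,dW(u)$, the increment admits the representation
\[
X(s+\varepsilon)-X(s)=\int_0^T \Delta G_s(u)\,dW(u)=I_1(\Delta G_s),
\]
a centered Gaussian random variable with variance $\sigma_s^2:=\int_0^T|\Delta G_s(v)|^2 dv$. Using the classical identity $H_3(y)=y^3-3y$ for the third Hermite polynomial in the standardized variable $y=I_1(\Delta G_s)/\sigma_s$, and the fact that $\sigma_s^3 H_3\bigl(I_1(\Delta G_s)/\sigma_s\bigr)=I_3(\Delta G_s^{\otimes 3})$, one obtains the chaos decomposition
\[
\bigl(X(s+\varepsilon)-X(s)\bigr)^{3}=I_3\bigl(\Delta G_s^{\otimes 3}\bigr)+3\,\sigma_s^2\,I_1(\Delta G_s).
\]
This is the key identity; it is just the cube-Hermite formula written in Wiener-chaos language.

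Next I would divide by $\varepsilon$ and integrate in $s$ from $0$ to $T$. The $I_1$ term immediately yields
\[
\frac{3}{\varepsilon}\int_0^T ds\,\sigma_s^2\,I_1(\Delta G_s)
=\frac{3}{\varepsilon}\int_0^T ds\Bigl(\int_0^T \Delta G_s(u)\,dW(u)\Bigr)\Bigl(\int_0^T|\Delta G_s(v)|^2 dv\Bigr),
\]
which is exactly $\mathcal{I}_1$. For the $I_3$ term I would rewrite the symmetric multiple Wiener-Itô integral as an iterated Itô integral over the simplex, picking up the combinatorial factor $3!=6$:
\[
I_3\bigl(\Delta G_s^{\otimes 3}\bigr)=6\int_0^T dW(s_3)\int_0^{s_3}dW(s_2)\int_0^{s_2}dW(s_1)\,\prod_{k=1}^{3}\Delta G_s(s_k),
\]
and then use a stochastic Fubini theorem to interchange the Lebesgue integration in $s$ with the triple iterated Itô integration, producing the desired expression for $\mathcal{I}_3$.

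The only nontrivial step is justifying the stochastic Fubini swap; this requires checking a square-integrability condition on $(s,s_1,s_2,s_3)\mapsto \varepsilon^{-1}\prod_{k=1}^{3}\Delta G_s(s_k)\,\mathbf{1}_{\{s_1<s_2<s_3\}}$. For fixed $\varepsilon>0$ this is finite thanks to the $L^2$ regularity of the fBm kernel $G$ (in fact $\int_0^T\int_0^T|\Delta G_s(u)|^2 du\,ds$ is a multiple of $\int_0^T\delta^2(s,s+\varepsilon)\,ds<\infty$, and the triple product is controlled similarly by Cauchy--Schwarz), so the interchange is legitimate. Collecting the two pieces yields $[X,3]_\varepsilon(T)=\mathcal{I}_1+\mathcal{I}_3$ as claimed; since $I_1$ and $I_3$ live in orthogonal Wiener chaoses, this decomposition is moreover the orthogonal chaos expansion of $[X,3]_\varepsilon(T)$, which is what will be exploited in Proposition~\ref{ChaosConv}.
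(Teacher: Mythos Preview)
Your proof is correct and follows essentially the same approach as the paper, which simply says the result ``follows from two uses of the multiplication formula for Wiener integrals \cite[Proposition 1.1.3]{Nbook}'' and leaves all details to the reader. Your Hermite-polynomial derivation of the identity $I_1(f)^3=I_3(f^{\otimes 3})+3\|f\|^2 I_1(f)$ is exactly that product formula applied twice (indeed the paper restates this identity later as Lemma~\ref{cubechaos}), and your explicit stochastic-Fubini justification supplies the details the paper omits.
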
 
 
The above lemma indicates the Wiener chaos decomposition of 
$[X,3]_{\varepsilon}\left(  T\right)  $ into the term $\mathcal{I}_{1}$ of 
line (\ref{firstchaos}) which is in the first Wiener chaos (i.e. a Gaussian 
term), and the term $\mathcal{I}_{3}$ of line (\ref{thirdchaos}), in the third 
Wiener chaos. The next two lemmas contain information on the behavior of each 
of these two terms, as needed to prove Proposition \ref{ChaosConv}. 
 
\begin{lemma} 
\label{I1pro}$\mathcal{I}_{1}$ converges to $0$ in $L^{2}\left( 
\Omega\right)  $ as $\varepsilon\rightarrow0$. 
\end{lemma}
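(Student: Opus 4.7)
The plan is to exploit the stationary-increment structure of fBm to pull a deterministic factor out of $\mathcal{I}_1$, reducing the problem to a telescoping integral. Observe that $X(s+\varepsilon)-X(s) = \int_0^T \Delta G_s(u)\, dW(u)$, so by the It\^o isometry the parenthesis in (\ref{firstchaos}) satisfies $\int_0^T |\Delta G_s(v)|^2\, dv = \mathbf{E}[(X(s+\varepsilon)-X(s))^2]$. This is exactly $\delta^2(s,s+\varepsilon)$, which for fBm with $H=1/6$ is the constant $\varepsilon^{2H} = \varepsilon^{1/3}$, independent of $s$. Substituting into (\ref{firstchaos}) gives
\[
\mathcal{I}_1 \;=\; 3\,\varepsilon^{-2/3}\int_0^T \bigl(X(s+\varepsilon)-X(s)\bigr)\,ds \;=:\; 3\,\varepsilon^{-2/3}\, Y_\varepsilon.
\]

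Next I would telescope $Y_\varepsilon$. The change of variable $u=s+\varepsilon$ in the first half immediately yields
\[
Y_\varepsilon \;=\; \int_T^{T+\varepsilon} X(u)\,du \;-\; \int_0^\varepsilon X(u)\,du,
\]
which exposes the crucial cancellation: only two boundary integrals of length $\varepsilon$ remain. To bound $\mathbf{E}[Y_\varepsilon^2]$, I would apply the triangle inequality in $L^2(\Omega)$ together with Cauchy--Schwarz on each boundary term, using $\mathbf{E}[X(u)^2] = u^{2H}$ bounded uniformly on $[0,T+1]$ by a constant $C_0$:
\[
\mathbf{E}\Bigl[\bigl(\textstyle\int_a^{a+\varepsilon} X(u)\,du\bigr)^2\Bigr] \;\le\; \varepsilon \int_a^{a+\varepsilon}\mathbf{E}[X(u)^2]\,du \;\le\; C_0\,\varepsilon^2.
\]
Hence $\mathbf{E}[Y_\varepsilon^2] \le 4 C_0\,\varepsilon^2$, and
\[
\mathbf{E}[\mathcal{I}_1^2] \;=\; 9\,\varepsilon^{-4/3}\,\mathbf{E}[Y_\varepsilon^2] \;\le\; 36\,C_0\,\varepsilon^{2/3} \;\longrightarrow\; 0.
\]

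The only real subtlety is recognizing that one must retain the cancellation built into $Y_\varepsilon$ rather than estimating pointwise in $s$. The naive bound $\|X(s+\varepsilon)-X(s)\|_{L^2(\Omega)} = \varepsilon^{1/6}$ integrated over $s\in[0,T]$ would produce $\|\mathcal{I}_1\|_{L^2(\Omega)} \lesssim \varepsilon^{-2/3}\cdot T\cdot \varepsilon^{1/6} \to \infty$, so it is essential to use the telescoping identity, which shows that $Y_\varepsilon$ is of order $\varepsilon$ rather than of order $\varepsilon^{1/6}$. Once this observation is in hand, the estimate is straightforward and uses nothing beyond the It\^o isometry, the $s$-independence of the variance increment for stationary Gaussian processes, and Cauchy--Schwarz.
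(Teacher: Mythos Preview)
Your proof is correct and follows essentially the same approach as the paper: both exploit that for fBm the variance factor $\int_0^T|\Delta G_s(v)|^2\,dv=\varepsilon^{2H}$ is constant in $s$, then telescope $\int_0^T(X(s+\varepsilon)-X(s))\,ds$ into two boundary integrals of length $\varepsilon$, yielding $\mathbf{E}[\mathcal{I}_1^2]=O(\varepsilon^{4H})$. Your version simply pulls the constant factor out before squaring, whereas the paper squares first and then recognizes the resulting covariance double integral as a variance; the substance is identical.
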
 
 
\begin{lemma} 
\label{I3pro}$\mathcal{I}_{3}$ is bounded in $L^{2}\left(  \Omega\right)  $ 
for all $\varepsilon>0$, and does not converge in $L^{2}\left(  \Omega\right) 
$ as $\varepsilon\rightarrow0$. 
\end{lemma}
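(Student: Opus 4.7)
The plan is to combine a direct isometry computation with a chaos-closedness argument. First I would identify $\mathcal{I}_3$ as a multiple Wiener integral: since the kernel
\[
\phi_{\varepsilon}(s_1,s_2,s_3):=\frac{1}{\varepsilon}\int_0^T\prod_{k=1}^3\Delta G_s(s_k)\,ds
\]
is already symmetric in $(s_1,s_2,s_3)$, the factor $6$ in \eqref{thirdchaos} converts the iterated It\^o integral on the simplex into the full triple Wiener integral, so $\mathcal{I}_3=I_3(\phi_\varepsilon)$. The isometry then gives $\mathbf{E}[\mathcal{I}_3^2]=3!\,\|\phi_\varepsilon\|_{L^2([0,T]^3)}^2$, and expanding the square and applying Fubini to bring the $ds\,dt$ integrals outside collapses the inner triple integral: recognizing $\int_0^T\Delta G_s(u)\Delta G_t(u)\,du=\Theta^\varepsilon(s,t)$ yields the clean identity
\[
\mathbf{E}[\mathcal{I}_3^2]=\frac{6}{\varepsilon^2}\int_0^T\!\!\int_0^T\Theta^\varepsilon(s,t)^3\,ds\,dt.
\]

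Next I would specialize to fBm with $H=1/6$, where $\Theta^\varepsilon(s,t)=\tfrac12\bigl[|t-s+\varepsilon|^{2H}+|t-s-\varepsilon|^{2H}-2|t-s|^{2H}\bigr]$. The scaling substitution $u=(t-s)/\varepsilon$ extracts a factor $\varepsilon^{6H}\cdot\varepsilon=\varepsilon^{6H+1}$; since $6H+1=2$, the prefactor $1/\varepsilon^2$ cancels exactly, and I would pass to the limit via dominated convergence to obtain
\[
\mathbf{E}[\mathcal{I}_3^2]\ \longrightarrow\ 6T\int_{-\infty}^{\infty}\Bigl[\tfrac12\bigl(|u+1|^{1/3}+|u-1|^{1/3}-2|u|^{1/3}\bigr)\Bigr]^3\,du=:6TC_H.
\]
Integrability at infinity comes from the Taylor expansion $\Theta^1(0,u)\sim c|u|^{2H-2}=c|u|^{-5/3}$, giving integrand decay like $|u|^{-5}$; local boundedness near $u=0,\pm 1$ is immediate. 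A positivity argument for $C_H$ (the integrand is nonnegative up to sign and not identically zero) establishes $C_H>0$, which already proves the boundedness half of the lemma.

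For the non-convergence half I would run a chaos-closedness argument. Suppose, for contradiction, that $\mathcal{I}_3$ converges in probability to some $Y$ as $\varepsilon\to 0$. Each $\mathcal{I}_3$ lives in the third Wiener chaos, and by hypercontractivity the $L^2$-bound from the previous step upgrades to uniform $L^p$-bounds for every $p<\infty$; uniform integrability then promotes convergence in probability to convergence in $L^2$. Since the third chaos is closed in $L^2$, the limit $Y$ lies in the third chaos, and by $L^2$-convergence $\mathbf{E}[Y^2]=6TC_H>0$. On the other hand, combining Lemma \ref{I1pro} (so $\mathcal{I}_1\to 0$ in $L^2$, hence in distribution) with the cited result of \cite{GNRV} (so $[X,3]_\varepsilon=\mathcal{I}_1+\mathcal{I}_3$ converges in distribution to a non-degenerate Gaussian), $Y$ must be Gaussian. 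But a non-zero element of the third Wiener chaos is never Gaussian, a contradiction. This rules out convergence in probability, and in particular in $L^2$.

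The main technical obstacle is step two: justifying the scaling limit of the double integral of $\Theta^\varepsilon(s,t)^3$, where one must carefully handle the integrable singularity of the rescaled integrand at $u=\pm 1,0$ and control the behavior of the boundary terms that arise because the $u$-integration domain $[-s/\varepsilon,(T-s)/\varepsilon]$ is $s$-dependent. The chaos-Gaussianity dichotomy in the final paragraph is standard once the $L^2$-limit value is shown to be strictly positive, so essentially all the analytic work is concentrated in verifying the limit $6TC_H>0$ and applying dominated convergence uniformly in $s\in[0,T]$.
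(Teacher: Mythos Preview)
Your proof is correct and follows essentially the same route as the paper's: compute $\mathbf{E}[\mathcal{I}_3^2]=\frac{6}{\varepsilon^2}\iint\Theta^\varepsilon(s,t)^3\,ds\,dt$ via the isometry, exploit the exact scaling at $H=1/6$ to see this quantity stays bounded (the paper splits into diagonal and off-diagonal pieces and bounds each separately rather than running a single dominated-convergence argument, but the substance is the same), and then derive non-convergence from closedness of the third chaos together with the non-degenerate Gaussian limit of \cite{GNRV}.

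One correction: your parenthetical justification that $C_H>0$ because ``the integrand is nonnegative up to sign'' is wrong as stated, since the function $u\mapsto |u+1|^{1/3}+|u-1|^{1/3}-2|u|^{1/3}$ changes sign (it is positive near $u=0$ and negative for $|u|>1$ by concavity of $x\mapsto x^{1/3}$), so its cube does too. Fortunately you do not need an independent proof of $C_H>0$. Nonnegativity is automatic since $6TC_H=\lim\mathbf{E}[\mathcal{I}_3^2]\geq 0$, and in the contradiction step the non-degeneracy of the Gaussian limit from \cite{GNRV} already forces $Y\neq 0$, which is all the third-chaos argument requires. The paper proceeds exactly this way and never attempts to show $C_H>0$ directly.
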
 
 
\begin{proof} 
[Proof of Proposition \ref{ChaosConv}]We prove the proposition by 
contradiction. Assume $[X,3]_{\varepsilon}\left(  T\right)  $ converges in 
probability. For any $p>2$, there exists $c_{p}$ depending only on $p$ such 
that $\mathbf{E}\left[  \left\vert \mathcal{I}_{1}\right\vert ^{p}\right] 
\leq c_{p}\left(  \mathbf{E}\left[  \left\vert \mathcal{I}_{1}\right\vert 
^{2}\right]  \right)  ^{p/2}$ and $\mathbf{E}\left[  \left\vert \mathcal{I}%
_{3}\right\vert ^{p}\right]  \leq c_{p}\left(  \mathbf{E}\left[  \left\vert 
\mathcal{I}_{3}\right\vert ^{2}\right]  \right)  ^{p/2}$; this is a general 
fact about random variables in fixed Wiener chaos, and can be proved directly 
using Lemma \ref{X3eExact} and the Burkh\"{o}lder-Davis-Gundy inequalities. 
Also see \cite{Borell}. Therefore, since we have $\sup_{\varepsilon 
>0}(\mathbf{E}\left[  \left\vert \mathcal{I}_{1}\right\vert ^{2}\right] 
+\mathbf{E}\left[  \left\vert \mathcal{I}_{3}\right\vert ^{2}\right] 
)<\infty$ by Lemmas \ref{I1pro} and \ref{I3pro}, we also get $\sup 
_{\varepsilon>0}(\mathbf{E}\left[  \left\vert \mathcal{I}_{1}+\mathcal{I}%
_{3}\right\vert ^{p}\right]  )<\infty$ for any $p$. Therefore, by uniform 
integrability, $[X,3]_{\varepsilon}\left(  T\right)  =\mathcal{I}%
_{1}+\mathcal{I}_{3}$ converges in $L^{2}\left(  \Omega\right)  $. In 
$L^{2}\left(  \Omega\right)  $, the terms $\mathcal{I}_{1}$ and $\mathcal{I}%
_{3}$ are orthogonal. Therefore, $\mathcal{I}_{1}$ and $\mathcal{I}_{3}$ must 
converge in $L^{2}\left(  \Omega\right)  $ separately. This contradicts the 
non-convergence of $\mathcal{I}_{3}$ in $L^{2}\left(  \Omega\right)  $ 
obtained in Lemma \ref{I3pro}. Thus $[X,3]_{\varepsilon}\left(  T\right)  $ 
does not converge in probability. 
\end{proof} 
 
\subsection{The case of stationary increments\label{HomogGaussSect}} 
 
We prove a necessary and sufficient condition for having a zero odd $m$-th 
variation for Gaussian processes with stationary increments. 
 
\begin{theorem} 
\label{HomogGauss}Let $m>1$ be an odd integer. Let $X$ be a centered Gaussian 
process on $[0,T]$ with stationary increments; its canonical metric is%
\[ 
\delta^{2}\left(  s,t\right)  :=\mathbf{E}\left[  \left(  X\left(  t\right) 
-X\left(  s\right)  \right)  ^{2}\right]  =\delta^{2}\left(  \left\vert 
t-s\right\vert \right) 
\] 
where the univariate function $\delta^{2}$ is assumed to be increasing and 
concave on $[0,T]$. Then $X$ has zero $m$th variation if and only if 
$\delta\left(  r\right)  =o\left(  r^{1/\left(  2m\right)  }\right)  $. 
\end{theorem}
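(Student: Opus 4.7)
The plan is to combine Lemma \ref{lemma1} with the simplification of stationarity: $\mathbf{E}[(X(t+\varepsilon)-X(t))^2] = \delta^2(\varepsilon)$ is a constant, and by (\ref{defDelta}), $\Theta^\varepsilon(s,t)$ depends on $s,t$ only through $r:=|t-s|$, namely
\[\Theta^\varepsilon(r) = \tfrac{1}{2}\bigl[\delta^2(r+\varepsilon) + \delta^2(|r-\varepsilon|) - 2\delta^2(r)\bigr],\]
which is non-positive for $r \geq \varepsilon$ by concavity of $\delta^2$. Thus the whole problem reduces to two-sided estimates on $\int_0^T |\Theta^\varepsilon(r)|^{m-2j}\,dr$ for $j=0,\ldots,(m-1)/2$.

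\textbf{Sufficiency.} Two elementary bounds drive the argument. First, Cauchy--Schwarz gives $|\Theta^\varepsilon(r)|\leq\delta^2(\varepsilon)$ for all $r$. Second, a direct telescoping of integrals together with $\delta^2(2\varepsilon)\leq 2\delta^2(\varepsilon)$ (a consequence of concavity and $\delta^2(0)=0$) yields
\[\int_0^T |\Theta^\varepsilon(r)|\,dr \leq C\,\varepsilon\,\delta^2(\varepsilon)\]
for a constant $C$ depending only on $T$. Substituting these into each $J_j$---the first bound raised to the $(m-2j-1)$-th power and the integral bound for the one remaining factor---gives $|J_j|\leq C'\,\delta^{2m}(\varepsilon)/\varepsilon$ uniformly in $j$. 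Consequently $\mathbf{E}\bigl[[X,m]_\varepsilon(T)^2\bigr] \leq C''\,\delta^{2m}(\varepsilon)/\varepsilon$, which tends to zero precisely under the assumption $\delta(\varepsilon)=o(\varepsilon^{1/(2m)})$.

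\textbf{Necessity.} Standardize the increments by setting $\xi(s,\varepsilon):=(X(s+\varepsilon)-X(s))/\delta(\varepsilon)$, a Gaussian field of unit variance, and use the Hermite expansion $\xi^m = \sum_{k=0}^{(m-1)/2} a_{k,m} H_{m-2k}(\xi)$ (with $a_{0,m}=1$). This decomposes $[X,m]_\varepsilon(T)$ as a sum of components lying in distinct Wiener chaoses, hence $L^2(\Omega)$-orthogonal; convergence to $0$ in $L^2(\Omega)$ therefore forces every chaos component to vanish, in particular the top one:
\[\frac{m!\,\delta^{2m}(\varepsilon)}{\varepsilon^2}\int_0^T\!\int_0^T \rho^\varepsilon(s,t)^m\,ds\,dt \longrightarrow 0, \qquad \rho^\varepsilon:=\Theta^\varepsilon/\delta^2(\varepsilon).\]
To conclude, one establishes the matching lower bound $\int_0^T\int_0^T \rho^\varepsilon(s,t)^m\,ds\,dt \geq c\,\varepsilon$ uniformly as $\varepsilon\to 0$; combined with the previous display this forces $\delta^{2m}(\varepsilon)/\varepsilon\to 0$, equivalently $\delta(r)=o(r^{1/(2m)})$. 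The lower bound will follow from a change of variables $r=(t-s)/\varepsilon$ reducing the double integral to approximately $T\varepsilon\int_{\mathbf{R}}(\rho^\varepsilon_{\mathrm{resc}}(r))^m\,dr$, plus a concavity-based argument showing that the positive mass of $(\rho^\varepsilon_{\mathrm{resc}})^m$ near $r=0$ (where $\rho^\varepsilon_{\mathrm{resc}}\to 1$) is not cancelled by its tail.

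\textbf{Expected main obstacle.} Sufficiency is essentially the assembly of the two displayed bounds, the telescoping identity for $\int|\Theta^\varepsilon|$ being the only genuine calculation. The delicate step is the uniform lower bound on $\int\int(\rho^\varepsilon)^m$ in the necessity direction: since $\rho^\varepsilon$ is negative for $|t-s|>\varepsilon$ and $m$ is odd, signed cancellation could in principle make this double integral arbitrarily small. Showing that the near-diagonal positive contribution dominates, uniformly over all admissible concave $\delta^2$, is exactly where concavity of $\delta^2$ is genuinely used and where the critical exponent $1/(2m)$ gets pinned down.
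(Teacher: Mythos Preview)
Your sufficiency argument is correct and in fact slightly different from the paper's. The paper observes that by concavity $\Theta^\varepsilon(s,t)\le 0$ on the off-diagonal, and since each power $m-2j$ is odd the whole off-diagonal contribution to $\mathbf{E}[([X,m]_\varepsilon(T))^2]$ is \emph{negative}; it then simply bounds the diagonal by $cst\cdot\varepsilon^{-1}\delta^{2m}(\varepsilon)$. Your route---the $L^1$ bound $\int_0^T|\Theta^\varepsilon(r)|\,dr\le C\varepsilon\delta^2(\varepsilon)$ via telescoping and $\delta^2(2\varepsilon)\le 2\delta^2(\varepsilon)$, then peeling off $m-2j-1$ factors by the sup-bound---is equally valid and arguably more robust, since it does not rely on the sign.

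The necessity direction, however, has a genuine gap, and the paper's proof shows precisely where. You correctly use chaos orthogonality to reduce to a single chaos component, but you pick the \emph{top} chaos (power $m$ of $\Theta^\varepsilon$, the term the paper calls $J_0$). This forces you to prove a uniform lower bound $\iint_{D_\varepsilon}\rho^\varepsilon(s,t)^m\,ds\,dt\ge c\varepsilon$; you acknowledge that signed cancellation between the near-diagonal ($\rho^\varepsilon>0$) and the tail ($\rho^\varepsilon<0$, $m$ odd) is the obstacle, and offer only a vague ``concavity-based argument'' to handle it. With $\Theta^\varepsilon$ raised to the $m$-th power, no telescoping is available and this lower bound is genuinely hard to establish uniformly over all concave increasing $\delta^2$.

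The paper's key insight is to target instead the \emph{first} Wiener chaos, i.e.\ the term $J_{(m-1)/2}$ carrying $\Theta^\varepsilon$ to the first power (written as $J_1$ in the paper's $m=3$ computation). With a linear power, the double integral $\int\int\Theta^\varepsilon(s,t)\,ds\,dt$ telescopes exactly; after carefully splitting into diagonal and off-diagonal pieces (the paper's Steps 4.3--4.4) and using concavity, one extracts
\[
J_1 \;\ge\; cst\cdot\delta^{2m-2}(\varepsilon)\,(\delta^2)'(2\varepsilon)\;=\;cst\cdot\bigl((\delta^2)^m\bigr)'(2\varepsilon),
\]
and the implication $\bigl((\delta^2)^m\bigr)'(\varepsilon)\to 0\Rightarrow \varepsilon^{-1}\delta^{2m}(\varepsilon)\to 0$ is then elementary. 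Switching your necessity argument from $J_0$ to $J_{(m-1)/2}$ is exactly what is needed to close the gap.
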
 
 
\begin{proof} 
\noindent\emph{Step 0: setup.} The derivative $d\delta^{2}$ of $\delta^{2}$, 
in the sense of measures, is positive and bounded on $[0,T]$. By stationarity, 
$Var\left[  X\left(  t+\varepsilon\right)  -X\left(  t\right)  \right] 
=\delta^{2}\left(  \varepsilon\right)  .$ Using the notation in Lemma 
\ref{lemma1}, we get%
\[ 
J_{j}=\varepsilon^{-2}\delta^{4j}\left(  \varepsilon\right)  c_{j}\int_{0}%
^{T}dt\int_{0}^{t}ds\Theta^{\varepsilon}\left(  s,t\right)  ^{m-2j}. 
\] 
\vspace{0.1in} 
 
\noindent\emph{Step 1: diagonal. }We define the $\varepsilon$-diagonal 
$D_{\varepsilon}:=\left\{  0\leq t-\varepsilon<s<t\leq T\right\}  $. Trivially 
using the Cauchy-Schwarz's inequality,%
\[ 
\left\vert \Theta^{\varepsilon}\left(  s,t\right)  \right\vert \leq 
\sqrt{Var\left[  X\left(  t+\varepsilon\right)  -X\left(  t\right)  \right] 
Var\left[  X\left(  s+\varepsilon\right)  -X\left(  s\right)  \right] 
}=\delta^{2}\left(  \varepsilon\right)  . 
\] 
Hence, according to Lemma \ref{lemma1}, the diagonal portion $\sum 
_{j=0}^{\left(  m-1\right)  /2}J_{j,D_{\varepsilon}}$ of $\mathbf{E}\left[ 
\left(  [X,m]_{\varepsilon}\left(  T\right)  \right)  ^{2}\right]  $ can be 
bounded above, in absolute value, as:%
\begin{align*} 
\left\vert \sum_{j=0}^{\left(  m-1\right)  /2}J_{j,D_{\varepsilon}%
}\right\vert  &  :=\left\vert \sum_{j=0}^{\left(  m-1\right)  /2}%
\varepsilon^{-2}\delta^{4j}\left(  \varepsilon\right)  c_{j}\int_{\varepsilon 
}^{T}dt\int_{t-\varepsilon}^{t}ds\Theta^{\varepsilon}\left(  s,t\right) 
^{m-2j}\right\vert \\ 
&  \leq\frac{1}{\varepsilon^{2}}\sum_{j=0}^{\left(  m-1\right)  /2}c_{j}%
\int_{\varepsilon}^{T}dt\int_{t-\varepsilon}^{t}ds\delta^{2m}\left( 
\varepsilon\right)  \leq cst\cdot\varepsilon^{-1}\delta^{2m}\left( 
\varepsilon\right) 
\end{align*} 
where $cst$ denotes a constant (here depending only on $\delta$ and $m$) whose 
value may change in the remainder of the article's proofs. The hypothesis on 
$\delta^{2}$ implies that the above converges to $0$ as $\varepsilon$ tends to 
$0$.\vspace{0.1in} 
 
\noindent\emph{Step 2: small }$t$\emph{ term . }The term for $t\in 
\lbrack0,\varepsilon]$ and any $s\in\lbrack0,t]$ can be dealt with similarly, 
and is of a smaller order than the one in Step 1. Specifically we have%
\[ 
\left\vert J_{j,S}\right\vert :=\varepsilon^{-2}\delta^{4j}\left( 
\varepsilon\right)  c_{j}\left\vert \int_{0}^{\varepsilon}dt\int_{0}%
^{t}ds\Theta^{\varepsilon}\left(  s,t\right)  ^{m-2j}\right\vert 
\leq\varepsilon^{-2}\delta^{4j}\left(  \varepsilon\right)  c_{j}%
\delta^{2\left(  m-2j\right)  }\left(  \varepsilon\right)  \varepsilon 
^{2}=c_{j}\delta^{2m}\left(  \varepsilon\right)  , 
\] 
which converges to $0$ like $o\left(  \varepsilon\right)  $.\vspace{0.1in} 
 
\noindent\emph{Step 3: off-diagonal. }By stationarity, from (\ref{defDelta}), 
for any $s,t$ in the $\varepsilon$-off diagonal set $OD_{\varepsilon 
}:=\left\{  0\leq s<t-\varepsilon<t\leq T\right\}  $,%
\begin{align} 
\Theta^{\varepsilon}\left(  s,t\right)   &  =\left(  \delta^{2}\left( 
t-s+\varepsilon\right)  -\delta^{2}\left(  t-s\right)  \right)  -\left( 
\delta^{2}\left(  t-s\right)  -\delta^{2}\left(  t-s-\varepsilon\right) 
\right) \nonumber\\ 
&  =\int_{t-s}^{t-s+\varepsilon}d\delta^{2}\left(  r\right)  -\int 
_{t-s-\varepsilon}^{t-s}d\delta^{2}\left(  r\right)  . \label{Thetadiffdelta}%
\end{align} 
By the concavity of $\delta^{2}$, we see that $\Theta^{\varepsilon}\left( 
s,t\right)  $ is negative in $OD_{\varepsilon}$. According to Lemma 
\ref{lemma1}, the off-diagonal portion $\sum_{j=0}^{\left(  m-1\right) 
/2}J_{j,OD_{\varepsilon}}$ of $\mathbf{E}\left[  \left(  [X,m]_{\varepsilon 
}\left(  T\right)  \right)  ^{2}\right]  $ is precisely equal to,%
\[ 
\sum_{j=0}^{\left(  m-1\right)  /2}J_{j,OD_{\varepsilon}}:=\sum_{j=0}^{\left( 
m-1\right)  /2}\varepsilon^{-2}\delta^{4j}\left(  \varepsilon\right) 
c_{j}\int_{\varepsilon}^{T}dt\int_{0}^{t-\varepsilon}ds\Theta^{\varepsilon 
}\left(  s,t\right)  ^{m-2j}. 
\] 
The negativity of $\Theta^{\varepsilon}$on $OD_{\varepsilon}$, odd power 
$m-2j$, and positivity of all other factors above implies that the entire 
off-diagonal contribution to $\mathbf{E}\left[  \left(  [X,m]_{\varepsilon 
}\left(  T\right)  \right)  ^{2}\right]  $ is negative. Combining this with 
the results of Steps 1 and 2, we obtain that%
\[ 
\mathbf{E}\left[  \left(  [X,m]_{\varepsilon}\left(  T\right)  \right) 
^{2}\right]  \leq cst\cdot\varepsilon^{-1}\delta^{2m}\left(  2\varepsilon 
\right) 
\] 
which implies the sufficient condition in the theorem.\vspace{0.1in} 
 
\noindent\emph{Step 4: necessary condition.} The proof of this part is more 
delicate than the above: it requires an excellent control of the off-diagonal 
term, since it is negative and turns out to be of the same order of magnitude 
as the diagonal term. We spell out the proof here for $m=3$. The general case 
is similar, and is left to the reader.\vspace{0.1in} 
 
\noindent\emph{Step 4.1: positive representation.} The next elementary lemma 
(see the product formula in \cite[Prop. 1.1.3]{Nbook}, or \cite[Thm 
9.6.9]{HHK}) uses the following chaos integral notation: for any 
$n\in\mathbf{N}$, for $g\in L^{2}\left(  [0,T]^{n}\right)  $, $g$ symmetric in 
its $n$ variables, then $I_{n}\left(  g\right)  $ is the multiple Wiener 
integral of $g$ over $[0,T]^{n}$ with respect to $W$. 
 
\begin{lemma} 
\label{cubechaos}Let $f\in L^{2}\left(  [0,T]\right)  $. Then $I_{1}\left( 
f\right)  ^{3}=3\left\vert f\right\vert _{L^{2}\left(  [0,T]\right)  }%
^{2}I_{1}\left(  f\right)  +I_{3}\left(  f\otimes f\otimes f\right)  $ 
\end{lemma} 
 
Using this lemma, as well as definitions (\ref{defX}) and (\ref{defXm}), 
recalling the notation $\Delta G_{s}\left(  u\right)  :=G\left( 
s+\varepsilon,u\right)  -G\left(  s,u\right)  $ already used in Lemma 
\ref{X3eExact}, and exploiting the fact that the covariance of two multiple 
Wiener integrals of different orders is $0$, we can write 
\begin{align*} 
&  \mathbf{E}\left[  \left(  [X,3]_{\varepsilon}\left(  T\right)  \right) 
^{2}\right]  =\frac{9}{\varepsilon^{2}}\int_{0}^{T}ds\int_{0}^{T}%
dt\mathbf{E}\left[  I_{1}\left(  \Delta G_{s}\right)  I_{1}\left(  \Delta 
G_{t}\right)  \right]  \left\vert \Delta G_{s}\right\vert _{L^{2}\left( 
[0,T]\right)  }^{2}\left\vert \Delta G_{t}\right\vert _{L^{2}\left( 
[0,T]\right)  }^{2}\\ 
&  +\frac{1}{\varepsilon^{2}}\int_{0}^{T}ds\int_{0}^{T}dt\mathbf{E}\left[ 
I_{3}\left(  \left(  \Delta G_{s}\right)  ^{\otimes3}\right)  I_{3}\left( 
\left(  \Delta G_{t}\right)  ^{\otimes3}\right)  \right]  . 
\end{align*} 
Now we use the fact that $\mathbf{E}\left[  I_{3}\left(  h\right) 
I_{3}\left(  \ell\right)  \right]  =\left\langle h,\ell\right\rangle 
_{L^{2}\left(  [0,T]^{3}\right)  }$, plus the fact that in our stationary 
situation $\left\vert \Delta G_{s}\right\vert _{L^{2}\left(  [0,T]\right) 
}^{2}=\delta^{2}\left(  \varepsilon\right)  $ for any $s$. Hence the above 
equals%
\begin{align*} 
&  \frac{9\delta^{4}\left(  \varepsilon\right)  }{\varepsilon^{2}}\int_{0}%
^{T}ds\int_{0}^{T}dt\left\langle \Delta G_{s},\Delta G_{t}\right\rangle 
_{L^{2}\left(  [0,T]\right)  }+\frac{1}{\varepsilon^{2}}\int_{0}^{T}ds\int 
_{0}^{T}dt\left\langle \left(  \Delta G_{s}\right)  ^{\otimes3},\left(  \Delta 
G_{t}\right)  ^{\otimes3}\right\rangle _{L^{2}\left(  [0,T]^{3}\right)  }\\ 
&  =\frac{9\delta^{4}\left(  \varepsilon\right)  }{\varepsilon^{2}}\int 
_{0}^{T}ds\int_{0}^{T}dt\int_{0}^{T}du\Delta G_{s}\left(  u\right)  \Delta 
G_{t}\left(  u\right)  +\frac{1}{\varepsilon^{2}}\int_{0}^{T}ds\int_{0}^{T}dt%
{\displaystyle\iiint\limits_{[0,T]^{3}}} 
\prod_{i=1}^{3}\left(  du_{i}\Delta G_{s}\left(  u_{i}\right)  \Delta 
G_{t}\left(  u_{i}\right)  \right) \\ 
&  =\frac{9\delta^{4}\left(  \varepsilon\right)  }{\varepsilon^{2}}\int 
_{0}^{T}du\left\vert \int_{0}^{T}ds\Delta G_{s}\left(  u\right)  \right\vert 
^{2}+\frac{1}{\varepsilon^{2}}%
{\displaystyle\iiint\limits_{[0,T]^{3}}} 
du_{1}\ du_{2}\ du_{3}\left\vert \int_{0}^{T}ds\prod_{i=1}^{3}\left(  \Delta 
G_{s}\left(  u_{i}\right)  \right)  \right\vert ^{2}. 
\end{align*} 
\vspace{0.1in} 
 
\noindent\emph{Step 4.2: }$J_{1}$\emph{ as a lower bound}. The above 
representation is extremely useful because it turns out, as one readily 
checks, that of the two summands in the last expression above, the first is 
what we called $J_{1}$ and the second is $J_{0}$, and we can now see that both 
these terms are positive, which was not at all obvious before, since, as we 
recall, the off-diagonal contribution to either term is negative by our 
concavity assumption. Nevertheless, we may now have a lower bound on the 
$\varepsilon$-variation by finding a lower bound for the term $J_{1}$ alone. 
Reverting to our method of separating diagonal and off-diagonal terms, and 
recalling by Step 2 that we can restrict $t\geq\varepsilon$, we have%
\begin{align*} 
J_{1}  &  =\frac{9\delta^{4}\left(  \varepsilon\right)  }{\varepsilon^{2}%
}2\int_{\varepsilon}^{T}dt\int_{0}^{t}ds\int_{0}^{T}du\Delta G_{s}\left( 
u\right)  \Delta G_{t}\left(  u\right)  =\frac{9\delta^{4}\left( 
\varepsilon\right)  }{\varepsilon^{2}}2\int_{\varepsilon}^{T}dt\int_{0}%
^{t}ds\Theta_{\varepsilon}\left(  s,t\right) \\ 
&  =\frac{9\delta^{4}\left(  \varepsilon\right)  }{\varepsilon^{2}}%
\int_{\varepsilon}^{T}dt\int_{0}^{t}ds\left(  \delta^{2}\left( 
t-s+\varepsilon\right)  -\delta^{2}\left(  t-s\right)  -\left(  \delta 
^{2}\left(  t-s\right)  -\delta^{2}\left(  \left\vert t-s-\varepsilon 
\right\vert \right)  \right)  \right) \\ 
&  =J_{1,D}+J_{1,OD}%
\end{align*} 
where, performing the change of variables $t-s\mapsto s$%
\begin{align*} 
J_{1,D}  &  :=\frac{9\delta^{4}\left(  \varepsilon\right)  }{\varepsilon^{2}%
}\int_{\varepsilon}^{T}dt\int_{0}^{\varepsilon}ds\left(  \delta^{2}\left( 
s+\varepsilon\right)  -\delta^{2}\left(  s\right)  -\left(  \delta^{2}\left( 
s\right)  -\delta^{2}\left(  \varepsilon-s\right)  \right)  \right) \\ 
J_{1,OD}  &  :=\frac{9\delta^{4}\left(  \varepsilon\right)  }{\varepsilon^{2}%
}\int_{\varepsilon}^{T}dt\int_{\varepsilon}^{t}ds\left(  \delta^{2}\left( 
s+\varepsilon\right)  -\delta^{2}\left(  s\right)  -\left(  \delta^{2}\left( 
s\right)  -\delta^{2}\left(  s-\varepsilon\right)  \right)  \right)  . 
\end{align*} 
\vspace{0.1in} 
 
\noindent\emph{Step 4.3: Upper bound on }$\left\vert J_{1,OD}\right\vert $. We 
rewrite the planar increments of $\delta^{2}$ as in (\ref{Thetadiffdelta}) to 
show what cancellations occur: with the change of variable $s^{\prime 
}:=t-s-\varepsilon$, we get $-\Theta^{\varepsilon}\left(  s,t\right) 
=-\int_{s^{\prime}}^{s^{\prime}+\varepsilon}d\delta^{2}\left(  r\right) 
+\int_{s^{\prime}-\varepsilon}^{s^{\prime}}d\delta^{2}\left(  r\right)  $, 
and 
\begin{align*} 
\int_{\varepsilon}^{T}dt\int_{0}^{t-\varepsilon}ds\left(  -\Theta 
^{\varepsilon}\left(  s,t\right)  \right)   &  =\int_{\varepsilon}%
^{T}dt\left[  \int_{\varepsilon}^{t}ds^{\prime}\int_{s^{\prime}-\varepsilon 
}^{s^{\prime}}d\delta^{2}\left(  r\right)  -\int_{\varepsilon}^{t}ds^{\prime 
}\int_{s^{\prime}}^{s^{\prime}+\varepsilon}d\delta^{2}\left(  r\right) 
\right] \\ 
&  =\int_{\varepsilon}^{T}dt\left[  \int_{0}^{t-\varepsilon}ds^{\prime\prime 
}\int_{s^{\prime\prime}}^{s^{\prime\prime}+\varepsilon}d\delta^{2}\left( 
r\right)  -\int_{\varepsilon}^{t}ds^{\prime}\int_{s^{\prime}}^{s^{\prime 
}+\varepsilon}d\delta^{2}\left(  r\right)  \right] \\ 
&  =\int_{\varepsilon}^{T}dt\left[  \int_{0}^{\varepsilon}ds^{\prime\prime 
}\int_{s^{\prime\prime}}^{s^{\prime\prime}+\varepsilon}d\delta^{2}\left( 
r\right)  -\int_{t-\varepsilon}^{t}ds^{\prime}\int_{s^{\prime}}^{s^{\prime 
}+\varepsilon}d\delta^{2}\left(  r\right)  \right] 
\end{align*} 
where we also used the change $s^{\prime\prime}:=s^{\prime}-\varepsilon.$ Thus 
we have 
\[ 
J_{1,OD}=\frac{9\delta^{4}\left(  \varepsilon\right)  }{\varepsilon^{2}}%
\int_{\varepsilon}^{T}dt\left[  \int_{t-\varepsilon}^{t}ds\int_{s}%
^{s+\varepsilon}d\delta^{2}\left(  r\right)  -\int_{0}^{\varepsilon}ds\int 
_{s}^{s+\varepsilon}d\delta^{2}\left(  r\right)  \right]  =:K_{1}+K_{2}. 
\] 
We can already see that $K_{1}\geq0$ and $K_{2}\leq0$, so it is only necessary 
to find an upper bound on $\left\vert K_{2}\right\vert $; by Fubini on 
$\left(  r,s\right)  $, the integrand in $K_{2}$ is calculated as%
\[ 
\int_{0}^{\varepsilon}ds\int_{s}^{s+\varepsilon}d\delta^{2}\left(  r\right) 
=-\int_{0}^{\varepsilon}\delta^{2}\left(  r\right)  dr+\int_{\varepsilon 
}^{2\varepsilon}\delta^{2}\left(  r\right)  dr. 
\] 
In particular, because $\left\vert K_{1}\right\vert \ll\left\vert 
K_{2}\right\vert $ and $\delta^{2}$ is increasing, we get%
\begin{equation} 
\left\vert J_{1,OD}\right\vert \leq\frac{9\left(  T-\varepsilon\right) 
\delta^{4}\left(  \varepsilon\right)  }{\varepsilon^{2}}\left(  \int 
_{\varepsilon}^{2\varepsilon}\delta^{2}\left(  r\right)  dr-\int 
_{0}^{\varepsilon}\delta^{2}\left(  r\right)  dr\right)  . \label{J1ODlater}%
\end{equation} 
\vspace{0.1in} 
 
\noindent\emph{Step 4.4: Lower bound on }$J_{1,D}$. Note first that 
\[ 
\int_{0}^{\varepsilon}ds\left(  \delta^{2}\left(  s\right)  -\delta^{2}\left( 
\varepsilon-s\right)  \right)  =\int_{0}^{\varepsilon}ds\ \delta^{2}\left( 
s\right)  -\int_{0}^{\varepsilon}ds\ \delta^{2}\left(  \varepsilon-s\right) 
=0. 
\] 
Therefore%
\[ 
J_{1,D}=\frac{9\delta^{4}\left(  \varepsilon\right)  }{\varepsilon^{2}}%
\int_{\varepsilon}^{T}dt\int_{0}^{\varepsilon}ds\left(  \delta^{2}\left( 
s+\varepsilon\right)  -\delta^{2}\left(  s\right)  \right)  =\frac{9\delta 
^{4}\left(  \varepsilon\right)  }{\varepsilon^{2}}\left(  T-\varepsilon 
\right)  \int_{0}^{\varepsilon}ds\int_{s}^{s+\varepsilon}d\delta^{2}\left( 
r\right)  . 
\] 
We can also perform a Fubini on the integral in $J_{1,D}$, easily obtaining%
\[ 
J_{1,D}=\frac{9\delta^{4}\left(  \varepsilon\right)  }{\varepsilon^{2}}\left( 
T-\varepsilon\right)  \left(  \varepsilon\delta^{2}\left(  2\varepsilon 
\right)  -\int_{0}^{\varepsilon}\delta^{2}\left(  r\right)  dr\right)  . 
\]

\noindent\emph{Step 4.5: conclusion.} We may now compare $J_{1,D}$ and 
$\left\vert J_{1,OD}\right\vert $: by the results of Steps 4.1 and 4.2, 
\begin{align*} 
&  J_{1}=J_{1,D}-\left\vert J_{1,OD}\right\vert \geq\frac{9\delta^{4}\left( 
\varepsilon\right)  }{\varepsilon^{2}}\left(  T-\varepsilon\right)  \left( 
\varepsilon\delta^{2}\left(  2\varepsilon\right)  -\int_{0}^{\varepsilon 
}\delta^{2}\left(  r\right)  dr\right) \\ 
&  -\frac{9\delta^{4}\left(  \varepsilon\right)  }{\varepsilon^{2}}\left( 
T-\varepsilon\right)  \left(  \int_{\varepsilon}^{2\varepsilon}\delta 
^{2}\left(  r\right)  dr-\int_{0}^{\varepsilon}\delta^{2}\left(  r\right) 
dr\right)  =\frac{9\delta^{4}\left(  \varepsilon\right)  }{\varepsilon^{2}%
}\left(  T-\varepsilon\right)  \int_{\varepsilon}^{2\varepsilon}\left( 
\delta^{2}\left(  2\varepsilon\right)  -\delta^{2}\left(  r\right)  \right) 
dr. 
\end{align*} 
When $\delta$ is in the H\"{o}lder scale $\delta\left(  r\right)  =r^{H}$, the 
above quantity is obviously commensurate with $\delta^{6}\left( 
\varepsilon\right)  /\varepsilon$, which implies the desired result, but in 
order to be sure we are treating all cases, we now present a general proof 
which only relies on the fact that $\delta^{2}$ is increasing and concave. 
 
Below we use the notation $\left(  \delta^{2}\right)  ^{\prime}$ for the 
density of $d\delta^{2}$, which exists a.e. since $\delta^{2}$ is concave. The 
mean value theorem and the concavity of $\delta^{2}$ then imply that for any 
$r\in\lbrack\varepsilon,2\varepsilon]$, 
\[ 
\delta^{2}\left(  2\varepsilon\right)  -\delta^{2}\left(  r\right) 
\geq\left(  2\varepsilon-r\right)  \inf_{[\varepsilon,2\varepsilon]}\left( 
\delta^{2}\right)  ^{\prime}=\left(  2\varepsilon-r\right)  \left(  \delta 
^{2}\right)  ^{\prime}\left(  2\varepsilon\right)  . 
\] 
Thus we can write%
\begin{align*} 
J_{1}  &  \geq9(T-\varepsilon)\varepsilon^{-1}\delta^{4}\left(  \varepsilon 
\right)  \left(  \delta^{2}\right)  ^{\prime}\left(  2\varepsilon\right) 
\int_{\varepsilon}^{2\varepsilon}\left(  2\varepsilon-r\right) 
dr=9(T-\varepsilon)\varepsilon^{-1}\delta^{4}\left(  \varepsilon\right) 
\left(  \delta^{2}\right)  ^{\prime}\left(  2\varepsilon\right) 
\varepsilon^{2}/2\\ 
&  \geq cst\cdot\delta^{4}\left(  \varepsilon\right)  \cdot\left(  \delta 
^{2}\right)  ^{\prime}\left(  2\varepsilon\right)  . 
\end{align*} 
Since $\delta^{2}$ is concave, and $\delta\left(  0\right)  =0$, we have 
$\delta^{2}\left(  \varepsilon\right)  \geq\delta^{2}\left(  2\varepsilon 
\right)  /2$. Hence, with the notation $f\left(  x\right)  =\delta^{2}\left( 
2x\right)  $, we have%
\[ 
J_{1}\geq cst\cdot f^{2}\left(  \varepsilon\right)  f^{\prime}\left( 
\varepsilon\right)  =cst\cdot\left(  f^{3}\right)  ^{\prime}\left( 
\varepsilon\right)  . 
\] 
Therefore we have that $\lim_{\varepsilon\rightarrow0}\left(  f^{3}\right) 
^{\prime}\left(  \varepsilon\right)  =0$. We prove this implies $\lim 
_{\varepsilon\rightarrow0}\varepsilon^{-1}f^{3}\left(  \varepsilon\right) 
=0$. Indeed, fix $\eta>0$; then there exists $\varepsilon_{\eta}>0$ such that 
for all $\varepsilon\in(0,\varepsilon_{\eta}]$, $0\leq\left(  f^{3}\right) 
^{\prime}\left(  \varepsilon\right)  \leq\eta$ (we used the positivity of 
$\left(  \delta^{2}\right)  ^{\prime}$). Hence, also using $f\left(  0\right) 
=0$, for any $\varepsilon\in(0,\varepsilon_{\eta}]$, 
\[ 
0\leq\frac{f^{3}\left(  \varepsilon\right)  }{\varepsilon}=\frac 
{1}{\varepsilon}\int_{0}^{\varepsilon}\left(  f^{3}\right)  ^{\prime}\left( 
x\right)  dx\leq\frac{1}{\varepsilon}\int_{0}^{\varepsilon}\eta dx=\eta. 
\] 
This proves that $\lim_{\varepsilon\rightarrow0}\varepsilon^{-1}f^{3}\left( 
\varepsilon\right)  =0$, which is equivalent to the announced necessary 
condition, and finishes the proof of the theorem. 
\end{proof} 
 
\subsection{Non-stationary case\label{NonHomogGaussSect}} 
 
The concavity and stationarity assumptions were used heavily above for the 
proof of the necessary condition in Theorem \ref{HomogGauss}. We now show they 
can be considerably weakened while still resulting in a sufficient condition: 
we only need a weak uniformity condition on the variances, coupled with a 
natural bound on the second-derivative measure of $\delta^{2}$. 
 
\begin{theorem} 
\label{nonhomoggauss}Let $m>1$ be an odd integer. Let $X$ be a centered 
Gaussian process on $[0,T]$ with canonical metric%
\[ 
\delta^{2}\left(  s,t\right)  :=\mathbf{E}\left[  \left(  X\left(  t\right) 
-X\left(  s\right)  \right)  ^{2}\right]  . 
\] 
Define a univariate function on $[0,T]$, also denoted by $\delta^{2}$, via%
\[ 
\delta^{2}\left(  r\right)  :=\sup_{s\in\lbrack0,T]}\delta^{2}\left( 
s,s+r\right)  , 
\] 
and assume that for $r$ near $0$,%
\begin{equation} 
\delta\left(  r\right)  =o\left(  r^{1/2m}\right)  . \label{nhdeltacond}%
\end{equation} 
Assume that, in the sense of distributions, the derivative $\partial\delta 
^{2}/\left(  \partial s\partial t\right)  $ is a finite signed $\sigma$ finite 
measure $\mu$ on $[0,T]^{2} - \Delta$ where $\Delta$ is the diagonal $\{(s,s) 
\vert s \in[0,T]\}$. Denote the off-diagonal simplex by $OD=\{\left( 
s,t\right)  :0\leq s\leq t-\varepsilon\leq T\}$; assume $\mu$ satisfies, for 
some constant $c$ and for all $\varepsilon$ small enough,%
\begin{equation} 
\left\vert \mu\right\vert \left(  OD\right)  \leq c\varepsilon^{-(m-1)/m}, 
\label{nhmubirdiecond}%
\end{equation} 
where $\left\vert \mu\right\vert $ is the total variation measure of $\mu$. 
Then $X$ has zero $m$th variation. 
\end{theorem}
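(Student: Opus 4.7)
The strategy is to retrace Steps 1-3 of the proof of Theorem \ref{HomogGauss}, but replacing the stationarity-based arithmetic with estimates that invoke the univariate bound $\delta^{2}(r)=\sup_s\delta^{2}(s,s+r)$ and the measure-theoretic control furnished by hypothesis \eqref{nhmubirdiecond}. Concretely, I would start from Lemma \ref{lemma1} to write
\[
\mathbf{E}\bigl[([X,m]_{\varepsilon}(T))^{2}\bigr]=\sum_{j=0}^{(m-1)/2}J_{j},
\]
and split each $J_{j}$ as $J_{j,D}+J_{j,OD}$ according to the diagonal $D_{\varepsilon}=\{0\le t-\varepsilon<s<t\le T\}$ (together with the harmless small-$t$ piece $t\in[0,\varepsilon]$, handled as in Step 2 of Theorem \ref{HomogGauss}) and the off-diagonal simplex $OD_{\varepsilon}=\{0\le s<t-\varepsilon<t\le T\}$. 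Since $\mathrm{Var}[X(u+\varepsilon)-X(u)]\le\delta^{2}(\varepsilon)$ for every $u$ by the definition of the univariate $\delta^{2}$, Cauchy-Schwarz yields the uniform bound $|\Theta^{\varepsilon}(s,t)|\le\delta^{2}(\varepsilon)$.

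For the diagonal term, the $\varepsilon^{-2}\delta^{4j}(\varepsilon)c_{j}\int\!\!\int_{D_{\varepsilon}}|\Theta^{\varepsilon}|^{m-2j}\,ds\,dt$ integrand is at most $\delta^{2m}(\varepsilon)$, and the region has area $\le T\varepsilon$, giving
\[
|J_{j,D}|\le \mathrm{cst}\cdot\varepsilon^{-1}\delta^{2m}(\varepsilon),
\]
which is $o(1)$ directly from hypothesis \eqref{nhdeltacond}. The small-$t$ piece is of smaller order by the same argument.

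The novel part is the off-diagonal estimate, where stationarity and concavity no longer provide a sign; instead I would exploit the measure $\mu$. Recalling \eqref{defDelta}, one writes
\[
\Theta^{\varepsilon}(s,t)=-\tfrac{1}{2}\Delta_{(s,t);(s+\varepsilon,t+\varepsilon)}\delta^{2}=\tfrac{1}{2}\int_{s}^{s+\varepsilon}\!\!\int_{t}^{t+\varepsilon}\mu(du\,dv),
\]
so in particular $|\Theta^{\varepsilon}(s,t)|\le\tfrac{1}{2}|\mu|([s,s+\varepsilon]\times[t,t+\varepsilon])$. On the off-diagonal, I bound $m-2j-1$ copies of $|\Theta^{\varepsilon}|$ by $\delta^{2}(\varepsilon)$ (Cauchy-Schwarz), keeping one copy in measure form:
\[
\int\!\!\int_{OD_{\varepsilon}}|\Theta^{\varepsilon}(s,t)|^{m-2j}\,ds\,dt\le \tfrac12\delta^{2(m-2j-1)}(\varepsilon)\int\!\!\int_{OD_{\varepsilon}}ds\,dt\int_{s}^{s+\varepsilon}\!\!\int_{t}^{t+\varepsilon}d|\mu|(u,v).
\]
A Fubini swap shows each point $(u,v)$ of $OD_{\varepsilon}$ is covered by at most an $\varepsilon^{2}$-mass set of $(s,t)$, so the right side is bounded by $\tfrac12\delta^{2(m-2j-1)}(\varepsilon)\,\varepsilon^{2}\,|\mu|(OD_{\varepsilon})\le \mathrm{cst}\cdot\delta^{2(m-2j-1)}(\varepsilon)\,\varepsilon^{2}\,\varepsilon^{-(m-1)/m}$. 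Multiplying by the prefactor $\varepsilon^{-2}\delta^{4j}(\varepsilon)c_{j}$ yields
\[
|J_{j,OD}|\le \mathrm{cst}\cdot\delta^{2(m-1)}(\varepsilon)\,\varepsilon^{-(m-1)/m}=o\bigl(\varepsilon^{(m-1)/m}\bigr)\cdot\varepsilon^{-(m-1)/m}=o(1),
\]
the first equality using \eqref{nhdeltacond}. Summing $J_{j,D}+J_{j,OD}$ over the finitely many $j\in\{0,\dots,(m-1)/2\}$ gives the desired conclusion.

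The main obstacle I anticipate is making the measure representation and the Fubini step fully rigorous: one must check that the distributional mixed partial is indeed produced as a genuine Stieltjes-type planar increment on $OD_{\varepsilon}$, and that the coverage estimate for $|\mu|$ goes through despite $\mu$ being merely signed and $\sigma$-finite off the diagonal. Once this has been set up, the interaction $\delta^{2(m-1)}(\varepsilon)\cdot\varepsilon^{-(m-1)/m}$ is exactly the scaling balance that explains why the exponent $(m-1)/m$ appears in \eqref{nhmubirdiecond}, and why the regularity threshold $\delta(r)=o(r^{1/(2m)})$ remains sharp even without stationarity.
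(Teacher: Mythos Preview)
Your proposal is correct and follows essentially the same route as the paper: Lemma \ref{lemma1}, a diagonal/off-diagonal split, Cauchy--Schwarz on the diagonal, and on the off-diagonal the idea of retaining a single factor $|\Theta^{\varepsilon}|$ in its measure form $\int_{s}^{s+\varepsilon}\!\int_{t}^{t+\varepsilon}d|\mu|$ before applying Fubini to extract $\varepsilon^{2}|\mu|(OD)$. The only adjustment the paper makes that you should incorporate is to take the diagonal strip of width $2\varepsilon$ rather than $\varepsilon$, so that for $(s,t)$ in the off-diagonal the rectangle $[s,s+\varepsilon]\times[t,t+\varepsilon]$ lies entirely in the set $\{u\le v-\varepsilon\}=OD$ on which hypothesis \eqref{nhmubirdiecond} controls $|\mu|$.
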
 
 
\begin{example} 
\label{RL} A typical situation covered by the above theorem is that of the 
Riemann-Liouville fBm $B^{H,RL}$ and similar non-stationary processes. The 
process $B^{H,RL}$ is defined by $B^{H,RL}\left(  t\right)  =\int_{0}%
^{t}\left(  t-s\right)  ^{H-1/2}dW\left(  s\right)  $; it differs from the 
standard fBm by a bounded variation process, and as such it has zero $m$th 
variation for any $H>1/(2m)$. This can also be obtained via our theorem, 
because $B^{H,RL}$ is a member of the class of Gaussian processes whose 
canonical metric satisfies%
\begin{equation} 
\left\vert t-s\right\vert ^{H}\leq\delta\left(  s,t\right)  \leq2\left\vert 
t-s\right\vert ^{H}. \label{ERL}%
\end{equation} 
(see \cite{MV}). For any process satisfying (\ref{ERL}), our theorem's 
condition on variances is equivalent to $H>1/\left(  2m\right)  $, while for 
the other condition, a direct computation yields $\mu\left(  dsdt\right) 
/\left(  dsdt\right)  \asymp\left\vert t-s\right\vert ^{2H-2}dsdt$ off the 
diagonal, and therefore, for $H<1/2$,%
\[ 
\mu\left(  OD\right)  =\left\vert \mu\right\vert \left(  OD\right)  \asymp 
\int_{0}^{T}\int_{\varepsilon}^{t}s^{2H-2}dsdt\asymp\varepsilon^{2H-1}. 
\] 
This quantity is bounded above by $\varepsilon^{-1+1/m}$ as soon as 
$H\geq1/\left(  2m\right)  $, of course, so the strict inequality is 
sufficient to apply the theorem and conclude that $B^{H,RL}$ all other 
processes satisfying (\ref{ERL}) have zero $m$th variation. 
\end{example} 
 
\begin{example} 
\label{RLgen}One can generalize Example \ref{RL} to any Gaussian process with 
a Volterra-convolution kernel: let $\gamma^{2}$ be a univariate increasing 
concave function, differentiable everywhere except possibly at $0$, and define%
\begin{equation} 
X\left(  t\right)  =\int_{0}^{t}\left(  \frac{d\gamma^{2}}{dr}\right) 
^{1/2}\left(  t-r\right)  dW\left(  r\right)  . \label{volterra}%
\end{equation} 
Then one can show (see \cite{MV}) that the canonical metric $\delta^{2}\left( 
s,t\right)  $ of $X$ is bounded above by $2\gamma^{2}\left(  \left\vert 
t-s\right\vert \right)  $, so that we can use the univariate $\delta 
^{2}=2\gamma^{2}$, and also $\delta^{2}\left(  s,t\right)  $ is bounded below 
by $\gamma^{2}\left(  \left\vert t-s\right\vert \right)  $. Similar 
calculations to the above then easily show that $X$ has zero $m$th variation 
as soon as $\delta^{2}\left(  r\right)  =o\left(  r^{1/\left(  2m\right) 
}\right)  $. Hence there are processes with non stationary increments that are 
more irregular than fractional Brownian for any $H>1/\left(  2m\right)  $ 
which still have zero $m$th variation: use for instance the $X$ above with 
$\gamma^{2}\left(  r\right)  =r^{1/\left(  2m\right)  }/\log\left( 
1/r\right)  $. 
\end{example} 
 
\section{Non-Gaussian case\label{NGC}} 
 
Now assume that $X$ is given by (\ref{defX}) and $M$ is a square-integrable 
(non-Gaussian) continuous martingale, $m$ is an odd integer, and define a 
positive non-random measure $\mu$ for $\bar{s}=\left(  s_{1},s_{2}%
,\cdots,s_{m}\right)  \in\lbrack0,T]^{m}$ by%
\begin{equation} 
\mu\left(  d\bar{s}\right)  =\mu\left(  ds_{1}ds_{2}\cdots ds_{m}\right) 
=\mathbf{E}\left[  d\left[  M\right]  \left(  s_{1}\right)  d\left[  M\right] 
\left(  s_{2}\right)  \cdots d\left[  M\right]  \left(  s_{m}\right)  \right] 
, \label{mu}%
\end{equation} 
where $\left[  M\right]  $ is the quadratic variation process of $M$. We make 
the following assumption on $\mu$. 
 
\begin{description} 
\item[(A)] The non-negative measure $\mu$ is absolutely continuous with 
respect to the Lebesgue measure $d\bar{s}$ on $[0,T]^{m}$ and $K\left( 
\bar{s}\right)  :=d\mu/d\bar{s}$ is bounded by a tensor-power function: $0\leq 
K\left(  s_{1},s_{2},\cdots,s_{m}\right)  \leq\Gamma^{2}\left(  s_{1}\right) 
\Gamma^{2}\left(  s_{2}\right)  \cdots\Gamma^{2}\left(  s_{m}\right)  $ for 
some non-negative function $\Gamma$ on $[0,T]$. 
\end{description} 
 
A large class of processes satisfying (A) is the case where $M\left( 
t\right)  =\int_{0}^{t}H\left(  s\right)  dW\left(  s\right)  $ where $H\in 
L^{2}\left(  [0,T]\times\Omega\right)  $ and $W$ is a standard Wiener process, 
and we assume $\mathbf{E}\left[  H^{2m}\left(  t\right)  \right]  $ is finite 
for all $t\in\lbrack0,T]$. Indeed then by H\"{o}lder's inequality, since we 
can take $K\left(  \bar{s}\right)  =\mathbf{E}\left[  H^{2}\left( 
s_{1}\right)  H^{2}\left(  s_{2}\right)  \cdots H^{2}\left(  s_{m}\right) 
\right]  $, we see that $\Gamma\left(  t\right)  =$ $\left(  \mathbf{E}\left[ 
H^{2m}\left(  t\right)  \right]  \right)  ^{1/\left(  2m\right)  }$ works. 
 
We will show that the sufficient conditions for zero odd variation in the 
Gaussian cases generalize to the case of condition (A), by associating $X$ 
with the Gaussian process%
\begin{equation} 
Z\left(  t\right)  :=\int_{0}^{T}\tilde{G}\left(  t,s\right)  dW\left( 
s\right)  . \label{Zee}%
\end{equation} 
where $\tilde{G}\left(  t,s\right)  :=\Gamma\left(  s\right)  G\left( 
t,s\right)  $. We have the following. 
 
\begin{theorem} 
\label{MartThm}Let $m$ be an odd integer $\geq3$. Let $X$ and $Z$ be as 
defined in (\ref{defX}) and (\ref{Zee}). Assume $M$ satisfies condition 
\emph{(A)} and $Z$ is well-defined and satisfies the hypotheses of Theorem 
\ref{HomogGauss} or Theorem \ref{nonhomoggauss} relative to a univariate 
function $\delta$. Assume that for some constant $c>0$, and every small 
$\varepsilon>0$,%
\begin{equation} 
\int_{t=2\varepsilon}^{T}dt\int_{s=0}^{t-2\varepsilon}ds\int_{u=0}%
^{T}\left\vert \Delta\tilde{G}_{t}\left(  u\right)  \right\vert \left\vert 
\Delta\tilde{G}_{s}\left(  u\right)  \right\vert du\leq c\varepsilon\delta 
^{2}\left(  2\varepsilon\right)  , \label{additional}%
\end{equation} 
where we use the notation $\Delta\tilde{G}_{t}\left(  u\right)  =\tilde 
{G}\left(  t+\varepsilon,u\right)  -\tilde{G}\left(  t,u\right)  $. Then $X$ 
has zero $m$th variation. 
\end{theorem}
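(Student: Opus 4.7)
The plan is to parallel the proof of Theorem \ref{HomogGauss} by decomposing $\mathbf{E}[([X,m]_{\varepsilon}(T))^{2}]$ into a diagonal and an off-diagonal piece, using assumption (A) to pass from the martingale structure to moments of the associated Gaussian process $Z$, and invoking (\ref{additional}) only where it is really needed, namely on the off-diagonal region. First I would write
\begin{equation*}
\mathbf{E}\left[\left([X,m]_{\varepsilon}(T)\right)^{2}\right] = \frac{1}{\varepsilon^{2}}\int_{0}^{T}\!\!\int_{0}^{T}\mathbf{E}\!\left[\left(X(s+\varepsilon)-X(s)\right)^{m}\left(X(t+\varepsilon)-X(t)\right)^{m}\right]ds\,dt,
\end{equation*}
and split the $(s,t)$-integration into a diagonal $D_{\varepsilon}=\{|t-s|\leq 2\varepsilon\}$ and the off-diagonal set $OD_{\varepsilon}=\{0\leq s\leq t-2\varepsilon\leq T\}$ appearing in (\ref{additional}).

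The main tool for the diagonal is the Burkholder--Davis--Gundy inequality applied to the square-integrable martingale $u\mapsto \int_{0}^{u}\Delta G_{s}(v)\,dM(v)$, whose value at $T$ equals $X(s+\varepsilon)-X(s)$. Together with assumption (A) this yields
\begin{equation*}
\mathbf{E}\!\left[|X(s+\varepsilon)-X(s)|^{2m}\right] \leq C_{m}\!\int_{[0,T]^{m}}\prod_{i=1}^{m}|\Delta G_{s}(u_{i})|^{2}\,\mu(d\bar{u}) \leq C_{m}\!\left(\int_{0}^{T}|\Delta\tilde{G}_{s}(u)|^{2}du\right)^{\!m}\!\!\leq C_{m}\,\delta^{2m}(\varepsilon),
\end{equation*}
since the middle expression is $\bigl(\mathrm{Var}[Z(s+\varepsilon)-Z(s)]\bigr)^{m}$ and $Z$ satisfies the hypotheses of Theorem \ref{HomogGauss} or Theorem \ref{nonhomoggauss}. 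Using this bound on $D_{\varepsilon}$ together with Cauchy--Schwarz, the diagonal contribution is of order $\varepsilon^{-1}\delta^{2m}(\varepsilon)$, which tends to zero by the hypothesis $\delta(r)=o(r^{1/(2m)})$.

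For the off-diagonal contribution, Cauchy--Schwarz is too crude: it would only reproduce the diagonal bound globally. Instead I would expand $(X(s+\varepsilon)-X(s))^{m}(X(t+\varepsilon)-X(t))^{m}$ via iterated It\^{o}'s formula (equivalently, the product formula for iterated stochastic integrals against $M$) as a sum of an iterated integral of $\prod_{i}\Delta G_{s}\prod_{j}\Delta G_{t}$ against $dM^{\otimes 2m}$, plus bracket-type corrections generated by $d[M]$ whenever two time-indices are merged. After taking expectations the purely martingale pieces vanish; by (A) each surviving term is dominated by a tensor product of $\Gamma^{2}$'s integrated against the $\Delta G$'s, and the lowest-order surviving cross-pairing produces a factor $\int_{0}^{T}|\Delta\tilde{G}_{s}(u)||\Delta\tilde{G}_{t}(u)|\,du$ multiplied by $(m-1)$ factors of $\delta^{2}(\varepsilon)$. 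Plugging this into the $OD_{\varepsilon}$-integral and using (\ref{additional}) controls the off-diagonal contribution by $c\,\varepsilon^{-1}\delta^{2m}(2\varepsilon)$, which again vanishes by the hypothesis on $\delta$.

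The main obstacle is the absence, for a general martingale $M$, of a clean chaos-orthogonality (Wick) expansion such as the one used in Step 4.1 of Theorem \ref{HomogGauss}: cross moments of iterated $M$-integrals do not decouple into a single tensorized pairing, and one must keep track of bracket-corrections from $d[M]$ at every contraction. Assumption (A) is the device that circumvents this, dominating $\mu$ by a tensor power of $\Gamma^{2}$ and thereby reducing the combinatorics to integrals that only involve the Gaussianized kernel $\tilde{G}$; hypothesis (\ref{additional}) is precisely the minimum amount of off-diagonal $L^{1}$-control on $\tilde{G}$ needed for the surviving cross-pairing term to stay at the same order $\varepsilon\,\delta^{2}(2\varepsilon)$ as the diagonal contribution from the Gaussian prototype. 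Once both pieces are shown to be $o(1)$, convergence of $[X,m]_{\varepsilon}(T)$ to $0$ in $L^{2}(\Omega)$ follows.
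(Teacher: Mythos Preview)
Your outline is correct and follows essentially the same route as the paper: diagonal via a moment bound plus Cauchy--Schwarz, off-diagonal via an iterated-integral expansion, assumption (A) to replace $d[M]$-moments by tensor powers of $\Gamma^{2}$ (hence by integrals in $\tilde G$), and condition (\ref{additional}) to control the single surviving cross factor $\int_{0}^{T}|\Delta\tilde G_{s}(u)||\Delta\tilde G_{t}(u)|\,du$ after Cauchy--Schwarzing the remaining $m-1$ factors to $\delta^{2(m-1)}(\varepsilon)$.

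Two points of comparison with the paper's execution are worth noting. First, rather than expanding the product $(X_{s+\varepsilon}-X_{s})^{m}(X_{t+\varepsilon}-X_{t})^{m}$ directly, the paper expands each factor $(X_{s+\varepsilon}-X_{s})^{m}$ separately via an explicit martingale-power formula (Corollary~2.18 of \cite{RE}), writing $[X,m]_{\varepsilon}(T)$ as a finite sum indexed by a contraction level $k\in\{0,\dots,(m-1)/2\}$ and a permutation $\sigma$, and then bounds the squared sum by the sum of squares $K_{m,k,\sigma}$. This avoids the bookkeeping of cross terms between different $(k,\sigma)$ that your direct product expansion would generate, and makes the passage through (A) transparent. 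Second, the paper treats the $k=0$ piece (the pure iterated $dM$-integral, with no bracket corrections) separately: after applying (A) it coincides exactly with the corresponding term for the Gaussian process $Z$, and is disposed of by invoking Theorem~\ref{HomogGauss} or Theorem~\ref{nonhomoggauss} directly, rather than via (\ref{additional}). Your uniform estimate through (\ref{additional}) would also handle this term, so this is a matter of economy rather than necessity.
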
 
 
The next proposition illustrates the range of applicability of Theorem 
\ref{MartThm}. We will use it to construct classes of examples of 
martingale-based processes $X$ to which the theorem applies. 
 
\begin{proposition} 
\label{Ex}Let $X$ be defined by (\ref{defX}). Assume $m$ is an odd integer 
$\geq3$ and condition \emph{(A)} holds. Assume that $\tilde{G}\left( 
t,s\right)  :=$ $\Gamma\left(  s\right)  G\left(  t,s\right)  $ can be bounded 
above as follows: for all $s,t$,%
\[ 
\tilde{G}\left(  t,s\right)  =\mathbf{1}_{s\leq t}\ g\left(  t,s\right) 
=\mathbf{1}_{s\leq t}\left\vert t-s\right\vert ^{1/\left(  2m\right) 
-1/2}f\left(  t,s\right) 
\] 
in which the bivariate function $f\left(  t,s\right)  $ is positive and 
bounded as%
\[ 
\left\vert f\left(  t,s\right)  \right\vert \leq f\left(  \left\vert 
t-s\right\vert \right) 
\] 
where the univariate function $f\left(  r\right)  $ is increasing, and concave 
on $\mathbf{R}_{+}$, with $\lim_{r\rightarrow0}f\left(  r\right)  =0$, and 
where $g$ has a second mixed derivative such that%
\begin{align*} 
\left\vert \frac{\partial g}{\partial t}\left(  t,s\right)  \right\vert 
+\left\vert \frac{\partial g}{\partial s}\left(  t,s\right)  \right\vert  & 
\leq c\left\vert t-s\right\vert ^{1/\left(  2m\right)  -3/2};\\ 
\left\vert \frac{\partial^{2}g}{\partial s\partial t}\left(  t,s\right) 
\right\vert  &  \leq c\left\vert t-s\right\vert ^{1/\left(  2m\right)  -5/2}. 
\end{align*} 
Also assume $t\mapsto g\left(  s,t\right)  $ is decreasing and $t\mapsto 
f\left(  s,t\right)  $ is increasing. Then $X$ has zero $m$-variation. 
\end{proposition}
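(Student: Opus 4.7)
The strategy is to invoke Theorem \ref{MartThm}, which reduces everything to showing that the associated Gaussian process $Z(t) = \int_0^T \tilde G(t,s) dW(s)$ satisfies the hypotheses of Theorem \ref{nonhomoggauss} (Theorem \ref{HomogGauss} does not apply since $\tilde G$ is not of convolution type, so $Z$ is generally not stationary), and that the extra estimate (\ref{additional}) holds. All three tasks boil down to quantitative kernel computations on $g(t,s) = |t-s|^{1/(2m) - 1/2} f(t,s)$ using the monotonicity hypotheses and the first- and second-derivative bounds.

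First, to obtain (\ref{nhdeltacond}), write $\delta_Z^2(s,t) = \int_0^{s\wedge t} (g(t,u)-g(s,u))^2 du + \int_{s \wedge t}^{s \vee t} g^2(s \vee t, u) du$. For $s < t$ and $r = t - s$, the boundary piece is bounded by $f^2(r) \int_s^t (t-u)^{1/m - 1} du = m f^2(r) r^{1/m}$, using $|f(t,u)| \leq f(|t-u|) \leq f(r)$ on the range of integration. The interior piece is controlled by writing $g(t,u) - g(s,u) = \int_s^t \partial_\tau g(\tau, u)\, d\tau$ and applying $|\partial g| \leq c|\tau - u|^{1/(2m) - 3/2}$, which after a direct computation gives a bound of the same order $r^{1/m} f^2(r)$. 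Since $f(r) \to 0$, this yields $\delta_Z(r) = o(r^{1/(2m)})$ uniformly in $s$.

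Next, using $\delta_Z^2 = Q(s,s) + Q(t,t) - 2Q(s,t)$ with $Q(s,t) = \int_0^{s \wedge t} g(s,u) g(t,u)\, du$, the off-diagonal mixed derivative is $-2 \partial_s \partial_t Q$. Differentiating formally and disposing of the singular boundary contribution at $u = s$ by an integration by parts in $u$, then applying the given bounds on $\partial g$ and $\partial^2 g$, yields the pointwise estimate $\bigl|\partial^2 \delta_Z^2/(\partial s \partial t)\bigr|(s,t) \leq C |t-s|^{1/m - 2}$ off the diagonal. Integration gives
\[
|\mu|(OD) \leq C \int_0^T dt \int_0^{t - \varepsilon} (t-s)^{1/m - 2}\, ds \;\asymp\; \varepsilon^{1/m - 1} = \varepsilon^{-(m-1)/m},
\]
which is exactly (\ref{nhmubirdiecond}). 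For the auxiliary bound (\ref{additional}), use the representation $\Delta \tilde G_{\cdot}(u) = \int_\cdot^{\cdot + \varepsilon} \partial_\rho g(\rho, u)\, d\rho$ for $u \leq s$, then apply Fubini to reduce the triple integral to a weighted convolution; exploiting the off-diagonal separation $t - s \geq 2\varepsilon$, this contribution is of order $\varepsilon^{1 + 1/m}$, matching $\varepsilon \delta_Z^2(2\varepsilon)$. The leftover range $s < u \leq s + \varepsilon$ is bounded directly by $|\Delta \tilde G_s(u)| \leq C \varepsilon^{1/(2m) - 1/2} f(\varepsilon)$ and is of lower order.

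The main obstacle is that $\partial_\tau g$ has exponent $1/(2m) - 3/2 < -1$, so naive bounds in the integrals above diverge at the diagonal $u = s$. One must exploit both the cancellation already present in the difference $g(t,u) - g(s,u)$ and the off-diagonal separation $t - s \geq 2\varepsilon$ to convert formally singular integrals into genuinely integrable quantities; the monotonicity assumptions on $f$ and $g$ are precisely what allow one to dominate the bivariate factors by the correct univariate power. Once these singular integrals are controlled, the hypotheses of Theorem \ref{MartThm} are met and the conclusion $[X,m]_\varepsilon(T) \to 0$ in $L^2(\Omega)$ follows.
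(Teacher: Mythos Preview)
Your overall strategy---reducing to Theorem~\ref{MartThm} and verifying (\ref{nhdeltacond}), (\ref{nhmubirdiecond}), and (\ref{additional}) for the Gaussian process $Z$---matches the paper's. But two steps do not close as written.

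First, in the variance estimate: writing $g(t,u)-g(s,u)=\int_s^t\partial_\tau g(\tau,u)\,d\tau$ and invoking $|\partial_\tau g|\le c|\tau-u|^{1/(2m)-3/2}$ yields, after the direct computation you allude to, only $\int_0^s\bigl[(s-u)^\alpha-(t-u)^\alpha\bigr]^2\,du\asymp r^{1/m}$ (with $\alpha=1/(2m)-1/2$, $r=t-s$). There is no $f^2(r)$ factor: the derivative bound has already absorbed $f$ into the constant $c$, so no $o(\cdot)$ can emerge from it. The paper instead exploits the product form directly: since $t\mapsto g(t,s)$ is decreasing while $t\mapsto f(t,s)$ is increasing, one has $0\le g(s,u)-g(t,u)\le f(t,u)\bigl[(s-u)^\alpha-(t-u)^\alpha\bigr]$, and then $f(t,u)\le f(|t-u|)$ together with concavity and $f(0^+)=0$ produce the extra $f^2(2\varepsilon)$ factor on the near-diagonal piece. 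Without this device, (\ref{nhdeltacond}) is not established.

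Second, and more decisively: you assert that your triple-integral bound of order $\varepsilon^{1+1/m}$ ``matches $\varepsilon\delta_Z^2(2\varepsilon)$''. It does not. Step~1, done correctly, gives $\delta_Z^2(2\varepsilon)=o(\varepsilon^{1/m})$, hence $\varepsilon\delta_Z^2(2\varepsilon)=o(\varepsilon^{1+1/m})$, strictly smaller than $\varepsilon^{1+1/m}$. So condition~(\ref{additional}) as stated in Theorem~\ref{MartThm} is \emph{not} verified by your estimate. The paper runs into exactly this obstruction and resolves it not by sharpening the kernel estimate but by reopening the proof of Theorem~\ref{MartThm}: one checks that in Steps~3 and~4 of that proof the weaker bound $c\,\varepsilon^{1+1/m}$ (in place of $c\,\varepsilon\delta^2(2\varepsilon)$) still yields a final contribution $\delta^{2m-2}(\varepsilon)\,\varepsilon^{-2}\cdot\varepsilon^{1+1/m}=o(1)$. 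Without this return trip into Theorem~\ref{MartThm}'s internals, the argument does not conclude.
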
 
 
\label{pex}The presence of the indicator function $\mathbf{1}_{s\leq t}$ in 
the expression for $\tilde{G}$ above is typical of most models, since it 
coincides with asking that $Z$ be adapted to the filtrations of $W$, which is 
equivalent to $X$ being adapted to the filtration of $M$. The proposition 
covers a wide variety of martingale-based models, which can be quite far from 
Gaussian models in the sense that they can have only a few moments. We 
describe one easily constructed class. 
 
\begin{example} 
\label{ExM}Assume that $M$ is a martingale such that $\mathbf{E}\left[ 
\left\vert d\left[  M\right]  /dt\right\vert ^{m}\right]  $ is bounded above 
by a constant $c^{2m}$ uniformly in $t\leq T$. For instance we can take 
$M_{t}=\int_{0}^{t}H_{s}\left(  s\right)  dW\left(  s\right)  $ where $H$ is a 
$W$-adapted process with $\mathbf{E}\left[  \left\vert H_{s}\right\vert 
^{2m}\right]  ^{1/2m}\leq c$. This boundedness assumption implies that we can 
take $\Gamma\equiv c$ in Condition (A), and $\tilde{G}=cG$. Let $G\left( 
t,s\right)  =G_{RLfBm}\left(  t,s\right)  :=\mathbf{1}_{s\leq t}\left\vert 
t-s\right\vert ^{1/\left(  2m\right)  -1/2+\alpha}$ for some $\alpha>0$; in 
other words, $G$ is the Brownian representation kernel of the 
Riemann-Liouville fBm with parameter $H=1/\left(  2m\right)  -\alpha>1/\left( 
2m\right)  $. It is immediate to check that the assumptions of Proposition 
\ref{Ex} are satisfied for this class of martingale-based models, which 
implies that the corresponding $X$ defined by (\ref{defX}) have zero $m$th variation. 
\end{example} 
 
More generally, assume that $G$ is bounded above by a multiple of $G_{RLfBm}$, 
and assume the two partial derivatives of $G$, and the mixed second order 
derivative of $G$, are bounded by the corresponding (multiples of) derivatives 
of $G_{RLfBm}$; one can check that the standard fBm's kernel is in this class, 
and that the martingale-based models of this class also satisfy the 
assumptions of Proposition \ref{Ex}, resulting again zero $m$th variations for 
the corresponding $X$ defined in (\ref{defX}). For the sake of conciseness, we 
will omit the details, which are tedious and straightforward. 
 
The main quantitative assumption on the univariate function $\delta\left( 
\varepsilon\right)  $ corresponding to $\tilde{G}$ in the theorem, i.e. 
$\delta\left(  r\right)  =o\left(  r^{1/\left(  2m\right)  }\right)  $, can be 
reinterpreted as a regularity condition on $X$. 
 
\begin{example} 
For example, if $X$ has fractional exponential moments, in the sense that for 
some constants $a>0$ and $0<\beta\leq2$, $\mathbf{E}\left[  \exp\left( 
a\left\vert X\left(  t\right)  -X\left(  s\right)  \right\vert ^{\beta 
}\right)  \right]  $ is finite for all $s,t$, then an almost-sure uniform 
modulus of continuity for $X$ is $r\mapsto\delta\left(  r\right)  \log 
^{\beta/2}\left(  1/r\right)  $. This can be established by using Corollary 
4.5 in \cite{VV}. By using the Burkholder-Davis-Gundy inequality on the 
exponential martingale based on $M$, we can prove that such fractional 
exponential moments hold, for instance, in the setting of Example \ref{ExM}, 
if there exists $b>0$ such that $\mathbf{E}\left[  \exp\left(  b\left\vert 
H_{s}\right\vert ^{2\beta}\right)  \right]  $ is bounded in $s\in\lbrack0,T]$. 
If one only has standard (non-exponential) moments, similar (less sharp) 
results can be obtained via Kolmogorov's continuity criterion instead of 
\cite{VV}. All details are left to the reader. 
\end{example} 
 
\section{Stochastic calculus\label{STOCH}} 
 
This section's goal is to define the so-called symmetric stochastic integral 
and its associated It\^{o} formula for processes which are not fBm. The reader 
may refer to the Introduction (Section 1) for motivations on why we study this 
topic. We concentrate on Gaussian processes under hypotheses similar to those 
used in Section \ref{NonHomogGaussSect} (Theorem \ref{nonhomoggauss}). The 
basic strategy is to use the results of \cite{GNRV} which were applied to fBm. 
Let $X$ be a stochastic process on $[0,1]$. According to Sections 3 and 4 in 
\cite{GNRV} (specifically, according to the proof of part 1 of Theorem 4.4 
therein), if for every bounded measurable function $g$ on $\mathbf{R}$, the 
limit%
\begin{equation} 
\lim_{\varepsilon\rightarrow0}\frac{1}{\varepsilon}\int_{0}^{1}du\left( 
X_{u+\varepsilon}-X_{u}\right)  ^{m}g\left(  \frac{X_{u+\varepsilon}+X_{u}}%
{2}\right)  =0 \label{ForIto}%
\end{equation} 
holds in probability, for both $m=3$ and $m=5$, then for every $t\in 
\lbrack0,1]$ and every $f\in C^{6}\left(  \mathbf{R}\right)  $, the 
\emph{symmetric} (\textquotedblleft generalized Stratonovich\textquotedblright%
) stochastic integral%
\begin{equation} 
\int_{0}^{t}f^{\prime}\left(  X_{u}\right)  d^{\circ}X_{u}=:\lim 
_{\varepsilon\rightarrow0}\frac{1}{\varepsilon}\int_{0}^{t}du\left( 
X_{u+\varepsilon}-X_{u}\right)  \frac{1}{2}\left(  f^{\prime}\left( 
X_{u+\varepsilon}\right)  +f^{\prime}\left(  X_{u}\right)  \right) 
\label{Stratoint}%
\end{equation} 
exists and we have the It\^{o} formula%
\begin{equation} 
f\left(  X_{t}\right)  =f\left(  X_{0}\right)  +\int_{0}^{t}f^{\prime}\left( 
X_{u}\right)  d^{\circ}X_{u}. \label{Ito}%
\end{equation} 
Our goal is thus to prove (\ref{ForIto}) for a wide class of Gaussian 
processes $X$, which will in turn imply the existence of (\ref{Stratoint}) and 
the It\^{o} formula (\ref{Ito}). 
 
If $X$ has stationary increments in the sense of Section \ref{HomogGaussSect}, 
meaning that $\mathbf{E}\left[  \left(  X_{s}-X_{t}\right)  ^{2}\right]  $ $=$ 
$\delta^{2}\left(  t-s\right)  $ for some univariate canonical metric function 
$\delta$, then by using $g\equiv\mathbf{1}$ and our Theorem \ref{HomogGauss}, 
we see that for (\ref{ForIto}) to hold, we must have $\delta\left(  r\right) 
=o\left(  r^{1/6}\right)  $. If one wishes to treat non-stationary cases, we 
notice that (\ref{ForIto}) for $g\equiv1$ is the result of our non-stationary 
Theorem \ref{nonhomoggauss}, so it is necessary to use that theorem's 
hypotheses, which include the non-stationary version of $\delta\left( 
r\right)  =o\left(  r^{1/6}\right)  $. But we will also need some 
non-degeneracy conditions in order to apply the quartic linear regression 
method of \cite{GNRV}. These are Conditions (i) and (ii) in the next Theorem. 
Condition (iii) therein is essentially a consequence of the condition that 
$\delta^{2}$ be increasing and concave. These conditions are all further 
discussed after the statement of the next theorem and its corollary. 
 
\begin{theorem} 
\label{ForItoThm}Let $m\geq3$ be an odd integer. Let $X$ be a Gaussian process 
on $[0,1]$ satisfying the hypotheses of Theorem \ref{nonhomoggauss}. This 
means in particular that we denote as usual its canonical metric by 
$\delta^{2}\left(  s,t\right)  $, and that there exists a univariate 
increasing and concave function $\delta^{2}$ such that $\delta\left( 
r\right)  =o\left(  r^{1/(2m)}\right)  $ and $\delta^{2}\left(  s,t\right) 
\leq\delta^{2}\left(  \left\vert t-s\right\vert \right)  $. Assume that for 
$u<v$, the functions $u\mapsto Var\left[  X_{u}\right]  =:Q_{u}$, 
$v\mapsto\delta^{2}\left(  u,v\right)  $, and $u\mapsto-\delta^{2}\left( 
u,v\right)  $ are increasing and concave. Assume there exist positive 
constants $a>1$, $b<1/2$, $c>1/4$, and $c^{\prime}>0$ such that for all 
$\varepsilon<u<v\leq1$, 
 
\begin{description} 
\item[(i)] $c\delta^{2}\left(  u\right)  \leq Q_{u},$ 
 
\item[(ii)] $c^{\prime}\delta^{2}\left(  u\right)  \delta^{2}\left( 
v-u\right)  \leq Q_{u}Q_{v}-Q^{2}\left(  u,v\right)  ,$ 
 
\item[(iii)] 
\begin{equation} 
\frac{\delta\left(  au\right)  -\delta\left(  u\right)  }{\left(  a-1\right) 
u}<b\frac{\delta\left(  u\right)  }{u}. \label{concaviii}%
\end{equation} 
 
\end{description} 
 
Then for every bounded measurable function $g$ on $\mathbf{R}$, 
\[ 
\lim_{\varepsilon\rightarrow0}\frac{1}{\varepsilon^{2}}\mathbf{E}\left[ 
\left(  \int_{0}^{1}du\left(  X_{u+\varepsilon}-X_{u}\right)  ^{m}g\left( 
\frac{X_{u+\varepsilon}+X_{u}}{2}\right)  \right)  ^{2}\right]  =0. 
\] 
 
\end{theorem}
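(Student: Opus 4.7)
The plan is to reduce the statement to a weighted extension of the zero $m$th variation estimate already established in Theorem \ref{nonhomoggauss}, by exploiting the joint Gaussian structure of the increments and the midpoints to peel off the weight $g$. Set $D_u := X_{u+\varepsilon}-X_u$ and $M_u := (X_{u+\varepsilon}+X_u)/2$. Expanding the square inside the expectation yields
\[
\frac{1}{\varepsilon^{2}}\int_{0}^{1}\!\!\int_{0}^{1} du\, dv\; \mathbf{E}\bigl[ D_u^m D_v^m\, g(M_u)\, g(M_v) \bigr].
\]
First I would split the integration region into the diagonal strip $\{|u-v|\le 2\varepsilon\}$ and its off-diagonal complement. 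On the diagonal strip, Cauchy--Schwarz together with $\|g\|_\infty^2$ and the Gaussian moment bound $\mathbf{E}[D_u^{2m}]\le c\,\delta^{2m}(\varepsilon)$ (here using the univariate majorant from Theorem \ref{nonhomoggauss}) gives a contribution of order $\varepsilon^{-1}\delta^{2m}(\varepsilon)\|g\|_\infty^2$, which vanishes by (\ref{nhdeltacond}).

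For the off-diagonal region, the key step is, for each fixed $(u,v)$, to decompose the pair $(D_u, D_v)$ into its orthogonal projection onto $\mathrm{span}(M_u, M_v)$ plus an independent Gaussian residual:
\[
D_u = \alpha\, M_u + \beta\, M_v + R_u, \qquad D_v = \alpha'\, M_u + \beta'\, M_v + R_v,
\]
where the regression coefficients $\alpha,\beta,\alpha',\beta'$ (depending on $u,v,\varepsilon$) are computed from $\mathrm{Cov}(M_u,M_v)^{-1}$ and the small cross-covariances $\mathrm{Cov}(D_\cdot,M_\cdot)$, and $(R_u,R_v)$ is jointly centered Gaussian and independent of $(M_u,M_v)$. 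Conditions (i) and (ii) guarantee the non-degeneracy of $\mathrm{Cov}(M_u, M_v)$: its determinant is bounded below by a positive multiple of $\delta^2(u)\delta^2(v-u)$, so the inverse is controlled. Using the covariance structure of $X$ together with the concavity of $u\mapsto Q_u$ and $v\mapsto\delta^2(u,v)$, one then shows that $|\alpha|,|\beta|,|\alpha'|,|\beta'|$ are small, of order $\delta^2(\varepsilon)$ divided by $\delta(u)$ or $\delta(v-u)$.

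Since $(R_u, R_v)$ is independent of $(M_u, M_v)$, the binomial expansion of $D_u^m D_v^m$ factorizes under expectation:
\[
\mathbf{E}\bigl[D_u^m D_v^m\, g(M_u)\, g(M_v)\bigr] = \sum_{j,k=0}^{m}\binom{m}{j}\binom{m}{k}\, \mathbf{E}[L_u^{m-j} L_v^{m-k}\, g(M_u)\, g(M_v)]\, \mathbf{E}[R_u^j R_v^k],
\]
with $L_u := \alpha M_u + \beta M_v$ and $L_v := \alpha' M_u + \beta' M_v$; terms with $j+k$ odd vanish by centering of the residuals. The principal term $j=k=m$ equals $\mathbf{E}[R_u^m R_v^m]\cdot\mathbf{E}[g(M_u) g(M_v)]$; since $\mathrm{Cov}(R_u, R_v)$ agrees with $\Theta^\varepsilon(u,v)$ up to a correction of lower order, and $|\mathbf{E}[g(M_u)g(M_v)]|\le\|g\|_\infty^2$ is a harmless bounded factor, this piece is handled exactly by the argument used for Theorem \ref{nonhomoggauss}. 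The remaining terms carry positive powers of the small coefficients, while $\mathbf{E}[L_u^{m-j} L_v^{m-k}g(M_u)g(M_v)]$ is controlled by $\|g\|_\infty^2$ times moments of $(M_u, M_v)$ which are uniformly bounded by condition (i).

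The main technical obstacle is proving that all cross terms with $j+k<2m$ truly contribute at higher order than $\varepsilon^2$ after integration, uniformly in $(u,v)$ over the off-diagonal domain. This requires careful bookkeeping of the compensation between the $\delta^{2}(\varepsilon)$-smallness of the regression coefficients and the potentially singular behaviour of the residual moments $\mathbf{E}[R_u^j R_v^k]$ as $|u-v|\to\varepsilon$. It is precisely here that condition (iii), a concavity-type regularity bound on $\delta$, intervenes, playing the role that stationarity played in the proof of Theorem \ref{HomogGauss}; this is the content of the ``quartic linear regression'' machinery of \cite{GNRV}, and my intention would be to invoke it essentially as a black box once the regression decomposition and the estimates on $\alpha,\beta,\alpha',\beta'$ furnished by (i)--(iii) are in place.
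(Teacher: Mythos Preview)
Your approach is essentially the paper's: split into diagonal and off-diagonal, and on the off-diagonal perform a linear regression of the increment pair $(D_u,D_v)$ onto the midpoint pair $(M_u,M_v)$, using (i)--(ii) to control the inverse covariance and (iii) for the resulting integrals, then defer the bookkeeping to \cite{GNRV}. Two concrete points need correction before the black-box invocation is legitimate.

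First, your off-diagonal region $\{|u-v|>2\varepsilon\}$ is too large. The paper restricts to $D_\varepsilon=\{\varepsilon^{1-\rho}\le u\le v-\varepsilon^{1-\rho}\}$ for a small $\rho>0$, excising not only a fattened diagonal but also the strip $\{u<\varepsilon^{1-\rho}\}$. This is essential: when $u$ is very small, $\mathrm{Var}(M_u)\asymp Q_u$ is small and the inversion of $\mathrm{Cov}(M_u,M_v)$ degenerates, so the regression coefficients blow up. The excised strip is still thin enough to be disposed of by Cauchy--Schwarz for $\rho$ small.

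Second, your asserted size for the regression coefficients, ``of order $\delta^2(\varepsilon)$ divided by $\delta(u)$ or $\delta(v-u)$'', is not what one actually gets. The relevant covariances are of the form $Q_{u+\varepsilon}-Q_u$ and planar increments of $\delta^2$, which by concavity are bounded by $\varepsilon\,\delta^2(u)/u$ and $\varepsilon\,\delta^2(v-u)/(v-u)$, not by $\delta^2(\varepsilon)$. After dividing by the relevant pieces of $\mathrm{Cov}(M_u,M_v)^{-1}$ one finds, for instance, $|r_{11}|\le cst\cdot\varepsilon\,\delta(u)/u$. The required integrated estimate $\iint_{D_\varepsilon}|r_{ij}|^k\,du\,dv\le cst\cdot\varepsilon\,\delta^k(\varepsilon)$ then reduces to the one-dimensional bound
\[
\int_\varepsilon^1\Bigl(\frac{\delta(u)}{u}\Bigr)^k du\;\le\; c_k\,\varepsilon\Bigl(\frac{\delta(\varepsilon)}{\varepsilon}\Bigr)^k,
\]
and it is precisely here, and only here, that condition (iii) enters: rewritten as $\delta(au)/(au)\le K_{a,b}\,\delta(u)/u$ with $K_{a,b}<1$, it allows a geometric-series argument over dyadic-type scales (this is the paper's Lemma~\ref{deltauuk}). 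Without this, the integral above need not be finite, let alone small. Once you fix the off-diagonal region and the coefficient estimates, and isolate this lemma, your sketch matches the paper's proof.
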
 
 
When we apply this theorem to the case $m=3$, the assumption depending on $m$, 
namely $\delta\left(  r\right)  =o\left(  r^{1/(2m)}\right)  $ is satisfied a 
fortiori for $m=5$ as well, which means that under the assumption 
$\delta\left(  r\right)  =o\left(  r^{1/6}\right)  $, the theorem's conclusion 
holds for $m=3$ and $m=5$. Therefore, as mentioned in the strategy above, we 
immediately get the following. 
 
\begin{corollary} 
\label{coroll}Assume the hypotheses of Theorem \ref{ForItoThm} with $m=3$. We 
have existence of the symmetric integral in (\ref{Stratoint}), and its It\^{o} 
formula (\ref{Ito}), for every $f\in C^{6}\left(  \mathbf{R}\right)  $ and 
$t\in\lbrack0,1]$. 
\end{corollary}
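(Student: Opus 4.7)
The corollary is essentially an assembly of pieces already in place: Theorem \ref{ForItoThm} provides the quantitative vanishing statement, and the results quoted from \cite{GNRV} at the start of Section \ref{STOCH} provide the abstract machinery converting such a vanishing into existence of the symmetric integral and its Itô formula. The plan is therefore to verify that Theorem \ref{ForItoThm}'s conclusion can legitimately be invoked at both exponents $m=3$ and $m=5$ that the \cite{GNRV} strategy requires, and then to translate $L^2$ vanishing into convergence in probability as needed by that strategy.

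The first step is to observe that all the hypotheses of Theorem \ref{ForItoThm} except the one quantitative assumption $\delta(r) = o(r^{1/(2m)})$ are $m$-free: they are concavity/monotonicity of $Q_u$ and of $v \mapsto \delta^2(u,v)$, the non-degeneracy conditions (i) and (ii), and the concavity-type bound (iii), none of which depends on $m$. So the only thing to check for the $m=5$ invocation is the quantitative condition $\delta(r) = o(r^{1/10})$. But this is immediate from the standing assumption $\delta(r) = o(r^{1/6})$ (the $m=3$ case), since for $r \in (0,1)$ one has $r^{1/6} = r^{1/10} \cdot r^{1/15}$, so
\[
\frac{\delta(r)}{r^{1/10}} = \frac{\delta(r)}{r^{1/6}} \cdot r^{1/15} \longrightarrow 0
\]
as $r \to 0$. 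Hence Theorem \ref{ForItoThm} applies both at $m=3$ and at $m=5$ under the hypotheses of the corollary.

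The second step is to upgrade the conclusion of Theorem \ref{ForItoThm} to what \cite{GNRV} asks for. The theorem delivers, for any bounded measurable $g$,
\[
\lim_{\varepsilon \to 0} \frac{1}{\varepsilon^2} \mathbf{E}\left[ \left( \int_0^1 du\, (X_{u+\varepsilon} - X_u)^m g\!\left( \tfrac{X_{u+\varepsilon}+X_u}{2}\right) \right)^2 \right] = 0,
\]
i.e.\ the random variable $\varepsilon^{-1} \int_0^1 du\,(X_{u+\varepsilon} - X_u)^m g((X_{u+\varepsilon}+X_u)/2)$ tends to zero in $L^2(\Omega)$, and therefore in probability, for $m=3$ and $m=5$. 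This is exactly condition (\ref{ForIto}) of the \cite{GNRV} criterion recalled at the beginning of the section.

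Finally, invoking that criterion (the proof of part 1 of Theorem 4.4 of \cite{GNRV}) yields both the existence of the symmetric stochastic integral $\int_0^t f'(X_u)\, d^\circ X_u$ in the sense of (\ref{Stratoint}) and the Itô formula (\ref{Ito}) for every $f \in C^6(\mathbf{R})$ and every $t \in [0,1]$, which is the statement of the corollary. There is no real obstacle here: the only mildly non-trivial observation is the automatic propagation of the regularity hypothesis from $m=3$ to $m=5$; everything else is a direct citation of Theorem \ref{ForItoThm} and of the \cite{GNRV} machinery.
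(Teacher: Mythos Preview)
Your proposal is correct and follows essentially the same approach as the paper: the paragraph immediately preceding the corollary makes exactly your observation that the only $m$-dependent hypothesis is $\delta(r)=o(r^{1/(2m)})$, which propagates from $m=3$ to $m=5$, so Theorem \ref{ForItoThm} applies at both exponents and the \cite{GNRV} criterion (\ref{ForIto}) then yields the result. One small remark: the hypotheses of Theorem \ref{ForItoThm} also inherit condition (\ref{nhmubirdiecond}) from Theorem \ref{nonhomoggauss}, which is likewise $m$-dependent, but since $\varepsilon^{-(m-1)/m}$ is increasing in $m$ for $\varepsilon<1$, that condition also propagates automatically from $m=3$ to $m=5$; the paper itself glosses over this point as you do.
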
 
 
The end of Section \ref{NonHomogGaussSect} contains examples satisfying the 
hypotheses of Theorem \ref{nonhomoggauss}; most of these examples also satisfy 
the monotonicity and convexity conditions in the above theorem. We state this 
formally, omitting the details of checking the conditions. 
 
\begin{example} 
The conclusion of Corollary \ref{coroll} applies to The Riemann-Liouville fBm 
described in Example \ref{RL}, which is a Gaussian process with non-stationary 
increments. It also applies to any member of the wider class of processes in 
Example \ref{RLgen} for which the function $\gamma$ defined therein satisfies 
conditions (i), (ii), and (iii) of Theorem \ref{ForItoThm}. This includes the 
family of processes such that $\gamma\left(  r\right)  =r^{H}\log^{\beta 
}\left(  1/r\right)  $ for $H\in(1/6,1)$ and $\beta\in\mathbf{R}$, the case 
$\beta=0$ yielding the Riemann-Liouville fBm processes. 
\end{example} 
 
Before proceeding to the proof of Theorem \ref{ForItoThm}, we provide a 
broader discussion of its hypotheses. 
 
Condition (i) is a type of coercivity assumption on the non-degeneracy of 
$X$'s variances in comparison to its increments' variances. The hypotheses of 
Theorem \ref{nonhomoggauss} imply that $Q_{u}\leq\delta^{2}\left(  u\right) 
$, and Condition (i) simply adds that these two quantities should be 
commensurate, with a lower bound that it not too small. The "Volterra 
convolution"-type class of processes (\ref{volterra}) given at the end of 
Section \ref{NonHomogGaussSect}, which includes the Riemann-Liouville fBm's, 
satisfies Condition (i) with $c=1/2$. In the stationary case, (i) is trivially 
satisfied since $Q_{u}\equiv\delta^{2}\left(  u\right)  $. 
 
Condition (ii) is also a type of coercivity condition. It too is satisfied in 
the stationary case. We prove this claim, since it is not immediately obvious. 
In the stationary case, since $\delta^{2}\left(  u,v\right)  =\delta 
^{2}\left(  v-u\right)  =Q_{v-u}$, we calculate%
\[ 
Q_{u}Q_{v}-Q^{2}\left(  u,v\right)  =Q_{u}Q_{v}-4^{-1}\left(  Q_{u}%
+Q_{v}-Q_{v-u}\right)  ^{2}%
\] 
and after rearranging some terms we obtain%
\[ 
Q_{u}Q_{v}-Q^{2}\left(  u,v\right)  =2^{-1}Q_{v-u}\left(  Q_{u}+Q_{v}\right) 
-4^{-1}\left(  Q_{v}-Q_{u}\right)  ^{2}-4^{-1}Q_{v-u}^{2}. 
\] 
We note first that by the concavity of $Q$, we have $Q_{v}-Q_{u}<Q_{v-u}$, and 
consequently, $\left(  Q_{v}-Q_{u}\right)  ^{2}\leq\left(  Q_{v}-Q_{u}\right) 
Q_{v-u}\leq Q_{v}Q_{v-u}$. This implies%
\[ 
Q_{u}Q_{v}-Q^{2}\left(  u,v\right)  \geq2^{-1}Q_{v-u}Q_{u}+4^{-1}\left( 
Q_{v-u}Q_{v}-Q_{v-u}^{2}\right)  . 
\] 
Now by monotonicity of $Q$, we can write $Q_{v-u}Q_{v}\geq Q_{v-u}^{2}$. This, 
together with Condition (i), yield Condition (ii) since we now have%
\[ 
Q_{u}Q_{v}-Q^{2}\left(  u,v\right)  \geq2^{-1}Q_{v-u}Q_{u}\geq2^{-1}%
c^{2}\delta^{2}\left(  v-u\right)  \delta^{2}\left(  u\right)  . 
\]

Lastly, Condition (iii) represents a strengthened concavity condition on the 
univariate function $\delta$. Indeed, the left-hand side in (\ref{concaviii}) 
is the slope of the secant of the graph of $\delta$ between the points $u$ and 
$au$, while the right-hand side is $b$ times the slope of the secant from $0$ 
to $u$. If $b$ were allowed to be $1$, (iii) would simply be a consequence of 
convexity. Here taking $b\leq1/2$ means that we are exploiting the concavity 
of $\delta^{2}$; the fact that condition (iii) requires slightly more, namely 
$b$ strictly less than $1/2$, allows us to work similarly to the scale 
$\delta\left(  r\right)  =r^{H}$ with $H<1/2$, as opposed to simply asking 
$H\leq1/2$. Since the point of the Theorem is to allow continuity moduli which 
are arbitrarily close to $r^{1/6}$, Condition (iii) is hardly a 
restriction.\bigskip 
 
\textbf{Proof of Theorem \ref{ForItoThm}.} 
 
\bigskip 
 
\noindent\emph{Step 0: setup.} The expectation to be evaluated is written, as 
usual, as a double integral over $\left(  u,v\right)  \in\lbrack0,1]^{2}$. For 
$\varepsilon>0$ fixed, we define the \textquotedblleft 
off-diagonal\textquotedblright\ set 
\[ 
D_{\varepsilon}=\left\{  \left(  u,v\right)  \in\lbrack0,1]^{2}:\varepsilon 
^{1-\rho}\leq u\leq v-\varepsilon^{1-\rho}<v\leq1\right\} 
\] 
where $\rho\in(0,1)$ is fixed. Using the boundedness of $g$ and 
Cauchy-Schwarz's inequality, thanks to the hypothesis $\delta\left(  r\right) 
=o\left(  r^{1/\left(  2m\right)  }\right)  $, the term corresponding to the 
diagonal part (integral over $D_{\varepsilon}^{c}$) can be treated identically 
to what was done in \cite{GNRV} in dealing with their term $\mathcal{J}%
^{\prime}\left(  \varepsilon\right)  $ following the statement of their Lemma 
5.1, by choosing $\rho$ small enough. It is thus sufficient to prove that 
\[ 
\mathcal{J}\left(  \varepsilon\right)  :=\frac{1}{\varepsilon^{2}}%
\mathbf{E}\left[  \iint_{D_{\varepsilon}}dudv\left(  X_{u+\varepsilon}%
-X_{u}\right)  ^{m}\left(  X_{v+\varepsilon}-X_{v}\right)  ^{m}g\left( 
\frac{X_{u+\varepsilon}+X_{u}}{2}\right)  g\left(  \frac{X_{v+\varepsilon 
}+X_{v}}{2}\right)  \right] 
\] 
tends to $0$ as $\varepsilon$ tends to $0$. We now use the same method and 
notation as in Step 3 of the proof of Theorem 4.1 in \cite{GNRV}. It proceeds 
through the linear regression analysis of the Gaussian vector $\left( 
G_{1,}G_{2},G_{3},G_{4}\right)  :=(X_{u+\varepsilon}+X_{u},X_{v+\varepsilon 
}+X_{v},X_{u+\varepsilon}-X_{u},X_{v+\varepsilon}-X_{v})$. In order to avoid 
repeating arguments from that proof, we only state and prove the new lemmas 
which are required. The new elements come from the analysis of the Gaussian 
vector $\left(  \Gamma_{3},\Gamma_{4}\right)  ^{t}:=A\left(  G_{1}%
,G_{2}\right)  $ where $A:=\Lambda_{21}\left(  \Lambda_{11}\right)  ^{-1}$ 
where $\Lambda_{11}$ is the covariance of the vector $\left(  G_{1,}%
G_{2}\right)  $ and $\Lambda_{21}$ is the matrix $\left\{  Cov\left( 
G_{i+2},G_{j}\right)  :i,j=1,2\right\}  $, as well as from the the centered 
Gaussian vector $\left(  Z_{3},Z_{4}\right)  $ which is the component 
independent of $\left(  G_{3},G_{4}\right)  $ in its linear regression against 
$\left(  G_{1,}G_{2}\right)  $, i.e. $\left(  G_{3},G_{4}\right) 
^{t}=A\left(  G_{1,}G_{2}\right)  ^{t}+\left(  Z_{3},Z_{4}\right)  $%
.\vspace{0.15in} 
 
\noindent\emph{Step 1: translating Lemma 5.3 from \cite{GNRV}.} Using the fact 
that $\mathbf{E}\left[  Z_{\ell}^{2}\right]  \leq\mathbf{E}\left[  G_{\ell 
}^{2}\right]  \leq\delta^{2}\left(  \varepsilon\right)  $, this lemma 
translates as the following, proved in the Appendix: 
 
\begin{lemma} 
\label{Lemma53}Let $k\geq2$ be an integer. Then for $\ell=3,4$, 
\[ 
\iint_{D_{\varepsilon}}\mathbf{E}\left[  \left\vert \Gamma_{\ell}\right\vert 
^{k}\right]  dudv\leq cst\cdot\varepsilon\delta^{k}\left(  \varepsilon\right) 
. 
\] 
 
\end{lemma}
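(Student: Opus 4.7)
The plan is to exploit the Gaussianity of $\Gamma_\ell$, which is a centered Gaussian variable because it is a linear image of the Gaussian pair $(G_1,G_2)$. Hence $\mathbf{E}[|\Gamma_\ell|^k] = c_k V_\ell^{k/2}$ with $V_\ell := \mathrm{Var}(\Gamma_\ell)$. Since $\Gamma_\ell$ is the $L^2(\Omega)$-projection of $G_\ell$ onto $\mathrm{span}(G_1,G_2)$, the a priori bound $V_\ell \leq \mathrm{Var}(G_\ell) \leq \delta^2(\varepsilon)$ holds by the hypothesis of Theorem \ref{nonhomoggauss}, so $V_\ell^{k/2} \leq V_\ell\cdot\delta^{k-2}(\varepsilon)$ reduces the estimate for general $k\geq 2$ to the case $k=2$, namely to showing
\[ \iint_{D_\varepsilon} V_\ell\,du\,dv \leq cst \cdot \varepsilon\,\delta^2(\varepsilon). \]

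For the $k=2$ bound, I use the regression identity $V_\ell = c^t \Lambda_{11}^{-1} c$, where $c = (\mathrm{Cov}(G_\ell,G_1), \mathrm{Cov}(G_\ell,G_2))^t$. Using $Q(a,b)=\tfrac12(Q_a+Q_b-\delta^2(a,b))$, each entry $c_{\ell j}$ decomposes into a single-variable increment of $Q$, bounded by $Q_\varepsilon \leq \delta^2(\varepsilon)$ via the concavity of $Q$, plus a planar second difference of $\delta^2$ bounded pointwise by the value of $|\mu|$ on a rectangle of side $\varepsilon$. For the denominator, Conditions (i) and (ii) and a direct expansion of $\sigma_i^2$ and $\rho$ in terms of $Q$-values yield $\det\Lambda_{11} \geq cst\cdot\delta^2(u)\delta^2(v-u)$ provided $\rho \in (0,1)$ is chosen small enough to absorb the $O(\delta^2(\varepsilon))$ correction terms. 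Combining these yields a pointwise bound of the form $V_\ell \leq cst\cdot [\delta^4(\varepsilon) + M_\varepsilon(u,v)^2] / [\delta^2(u)\delta^2(v-u)]$, where $M_\varepsilon(u,v) := |\mu|([u,u+\varepsilon]\times[v,v+\varepsilon])$.

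Integrating over $D_\varepsilon$ then splits into two pieces. The $\delta^4(\varepsilon)$-piece is handled by monotonicity and concavity of $\delta^2$ using the buffer zone constraints $u,v-u \geq \varepsilon^{1-\rho}$; the hypothesis $\delta(r)=o(r^{1/(2m)})$ for $m \geq 3$, combined with the small power of $\varepsilon$ saved by the buffer zone, yields a contribution of order $o(\varepsilon\delta^2(\varepsilon))$. The $M_\varepsilon^2$-piece is handled by Fubini, converting $\iint_{D_\varepsilon} M_\varepsilon^2\,du\,dv$ into an integral against $|\mu|$ on the off-diagonal simplex $OD$; the hypothesis $|\mu|(OD)\leq c\varepsilon^{-(m-1)/m}$ from Theorem \ref{nonhomoggauss} then produces a contribution of order $\varepsilon^{(m+1)/m}\delta^2(\varepsilon)$, again $o(\varepsilon\delta^2(\varepsilon))$. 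The main obstacle lies in correctly controlling the planar-increment terms of $c_{\ell j}$: a naive Cauchy-Schwarz bound loses the sharp $\varepsilon$-dependence, and one must instead use the explicit mixed-second-difference representation and a Fubini exchange exactly paralleling Step 4.3 of the proof of Theorem \ref{HomogGauss}.
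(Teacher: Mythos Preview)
Your reduction to $k=2$ via $V_\ell\le\delta^2(\varepsilon)$ is fine, and your lower bound $\det\Lambda_{11}\ge cst\cdot\delta^2(u)\delta^2(v-u)$ matches the paper's Item~3. But the structural claim about the cross-covariances $c_{\ell j}=\mathrm{Cov}(G_\ell,G_j)$ is wrong. For instance, writing $Q(a,b)=\tfrac12(Q_a+Q_b-\delta^2(a,b))$ one gets
\[
\mathrm{Cov}(G_3,G_2)=(Q_{u+\varepsilon}-Q_u)+\tfrac12\bigl[\delta^2(u,v{+}\varepsilon)-\delta^2(u{+}\varepsilon,v{+}\varepsilon)+\delta^2(u,v)-\delta^2(u{+}\varepsilon,v)\bigr],
\]
which is a $Q$-increment plus a \emph{sum of two first differences} of $\delta^2$ in the $u$-variable, \emph{not} a planar second difference. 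Hence these terms are not controlled by $M_\varepsilon(u,v)=|\mu|([u,u{+}\varepsilon]\times[v,v{+}\varepsilon])$, and your pointwise bound $V_\ell\le cst\,[\delta^4(\varepsilon)+M_\varepsilon^2]/[\delta^2(u)\delta^2(v-u)]$ is not established. The paper instead bounds each such first difference by $\varepsilon\,\delta^2(v-u)/(v-u)$ using the assumed concavity of $u\mapsto-\delta^2(u,v)$ and $v\mapsto\delta^2(u,v)$.

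Even granting your pointwise bound, the integration step fails. Using only the buffer constraint $u,v-u\ge\varepsilon^{1-\rho}$ gives $\iint_{D_\varepsilon}\delta^4(\varepsilon)/[\delta^2(u)\delta^2(v-u)]\,du\,dv\le cst\,\delta^4(\varepsilon)/\delta^4(\varepsilon^{1-\rho})$, and for the fBm scale $\delta(r)=r^H$ with $H$ close to $1/6$ this is $\varepsilon^{4H\rho}$, which is far larger than the target $\varepsilon\delta^2(\varepsilon)=\varepsilon^{1+2H}$ for any $\rho<1$. The paper avoids this by keeping the sharper bounds $|\Lambda_{21}[ij]|\le cst\,\varepsilon\,\delta^2(u)/u$ (resp.\ $\varepsilon\,\delta^2(v-u)/(v-u)$), so that after dividing by $\sqrt{K(u,u)}$ and $\sqrt{\Delta}$ one is left with integrals of the type $\int_\varepsilon^1|\delta(u)/u|^k\,du$; these are then shown to be $\le cst\,\varepsilon|\delta(\varepsilon)/\varepsilon|^k$ via an auxiliary lemma whose proof rests essentially on Condition~(iii) of Theorem~\ref{ForItoThm}. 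You never invoke Condition~(iii), and without it (or an equivalent quantitative concavity hypothesis on $\delta$) the required integral bound is simply false at the critical regularity.
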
 
 
\noindent\emph{Step 2: translating Lemma 5.4 from \cite{GNRV}.} We will prove 
the following result 
 
\begin{lemma} 
\label{Lemma54}For all $j\in\{0,1,\cdots,\left(  m-1\right)  /2\}$,%
\[ 
\iint_{D_{\varepsilon}}\left\vert \mathbf{E}\left[  Z_{3}Z_{4}\right] 
\right\vert ^{m-2j}dudv\leq cst\cdot\varepsilon\delta^{2\left(  m-2j\right) 
}\left(  \varepsilon\right)  . 
\] 
 
\end{lemma}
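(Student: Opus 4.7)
The plan is to follow the blueprint of Lemma 5.4 in \cite{GNRV}, replacing its fBm-specific computations with estimates that use only Conditions (i)--(iii) and the hypotheses of Theorem \ref{nonhomoggauss}. First I reduce to the case $m - 2j = 1$. Since $\mathbf{E}[Z_\ell^2] \leq \mathbf{E}[G_\ell^2] \leq cst \cdot \delta^2(\varepsilon)$ for $\ell = 3, 4$, Cauchy--Schwarz yields the uniform bound $|\mathbf{E}[Z_3 Z_4]| \leq cst \cdot \delta^2(\varepsilon)$ on all of $D_\varepsilon$, so that $|\mathbf{E}[Z_3 Z_4]|^{m-2j} \leq cst \cdot \delta^{2(m-2j-1)}(\varepsilon) \, |\mathbf{E}[Z_3 Z_4]|$. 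It therefore suffices to prove
\[
\iint_{D_\varepsilon} |\mathbf{E}[Z_3 Z_4]| \, du \, dv \leq cst \cdot \varepsilon \, \delta^2(\varepsilon).
\]

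Second, using the orthogonality $\mathbf{E}[Z_3 G_i] = 0$ for $i = 1, 2$ and the definition of $Z_4$ as the regression residual, I decompose
\[
\mathbf{E}[Z_3 Z_4] \;=\; \Theta^\varepsilon(u,v) - (p_{31}, p_{32}) \, \Lambda_{11}^{-1} \, (p_{41}, p_{42})^t,
\]
where $p_{ij} := \mathrm{Cov}(G_i, G_j)$. The \emph{diagonal} entries $p_{31} = Q_{u+\varepsilon} - Q_u$ and $p_{42} = Q_{v+\varepsilon} - Q_v$ are of order $\delta^2(\varepsilon)$ (using monotonicity and concavity of $Q$ from the hypotheses of Theorem \ref{ForItoThm}), while the \emph{cross} entries $p_{32}, p_{41}$ and $\Theta^\varepsilon(u,v)$ are all planar second-differences of $Q$, equivalently of $\delta^2$. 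Condition (ii) gives $\det \Lambda_{11} \geq cst \cdot \delta^2(u) \, \delta^2(v - u)$, which combined with Condition (i) controls $\|\Lambda_{11}^{-1}\|$. The regression correction is then dominated by $cst \cdot |\Theta^\varepsilon(u,v)|$ plus remainders of strictly lower order on $D_\varepsilon$, where $v - u \geq \varepsilon^{1 - \rho}$.

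Third, for $|\Theta^\varepsilon(u,v)|$ itself, I use the concavity of the univariate $\delta^2$ to obtain the off-diagonal pointwise estimate $|\Theta^\varepsilon(u,v)| \leq cst \cdot \varepsilon^2 (v - u)^{-2} \delta^2(v - u)$. This follows from the mean value theorem applied to the (a.e.-defined) derivative $(\delta^2)'$ together with the elementary inequality $r (\delta^2)'(r) \leq \delta^2(r)$, a consequence of concavity and $\delta^2(0) = 0$. Integration in the variable $r = v - u$ over $r \in [\varepsilon^{1-\rho}, 1]$, combined with the hypothesis $\delta^2(r) = o(r^{1/m})$, produces a bound of order $\varepsilon^2 \int_{\varepsilon^{1-\rho}}^{1} r^{-2} \delta^2(r) \, dr$, which in turn is $\leq cst \cdot \varepsilon \, \delta^2(\varepsilon)$ provided $\rho$ is chosen small enough.

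The main obstacle will be the quantitative control of the regression correction in the second step. While Condition (ii) supplies the essential lower bound on $\det \Lambda_{11}$, the cross-covariances $p_{32}, p_{41}$ decay off the diagonal only delicately; balancing this decay against the potential smallness of $\det \Lambda_{11}$ near the diagonal requires combining the coercivity in Conditions (i)--(ii) with the strengthened concavity of Condition (iii), much as \cite{GNRV} exploited the gap $H < 1/2$ in the fBm case. The argument presumably proceeds by a careful case analysis of the relative sizes of $u$, $v - u$, and $\varepsilon^{1 - \rho}$, with the choice of $\rho$ absorbing any slowly varying discrepancy between $\delta^2(\varepsilon)$ and $\varepsilon^{1/m}$.
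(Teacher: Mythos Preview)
Your decomposition $\mathbf{E}[Z_3 Z_4] = \Theta^\varepsilon(u,v) - \Lambda_{21}[\text{row }1]\,\Lambda_{11}^{-1}\,\Lambda_{21}[\text{row }2]^t$ is correct and is exactly the identity $\mathbf{E}[Z_3 Z_4] = \mathbf{E}[G_3 G_4] - \mathbf{E}[\Gamma_3 \Gamma_4]$. However, the paper exploits this identity far more efficiently than your plan. It simply bounds
\[
\left\vert \mathbf{E}[Z_3 Z_4]\right\vert^{m-2j} \leq cst\cdot\left\vert \mathbf{E}[G_3 G_4]\right\vert^{m-2j} + cst\cdot\left\vert \mathbf{E}[\Gamma_3 \Gamma_4]\right\vert^{m-2j},
\]
and then treats the two pieces by citation: the $\Gamma_3\Gamma_4$ term is handled by Cauchy--Schwarz and Lemma~\ref{Lemma53} (already proved), and the $G_3 G_4 = \Theta^\varepsilon(u,v)$ term is \emph{exactly} the integrand of $J_{j,OD}$ in the proof of Theorem~\ref{nonhomoggauss}, so the bound $\varepsilon^2 |\mu|(OD)\,\delta^{2(m-2j-1)}(\varepsilon)$ together with hypothesis~(\ref{nhmubirdiecond}) finishes the job immediately. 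The entire proof is a few lines of bookkeeping.

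Your approach instead attempts to re-derive the content of Lemma~\ref{Lemma53} inside this proof (your ``second step''), and replaces the measure-theoretic bound on $\Theta^\varepsilon$ by a pointwise concavity estimate (your ``third step''). Two concrete issues. First, the ``obstacle'' you flag is real: controlling the regression correction via Conditions (i)--(iii) is precisely the work already packaged in Lemma~\ref{Lemma53}, and there is no reason to redo it. Second, your pointwise bound $|\Theta^\varepsilon(u,v)| \leq cst\cdot\varepsilon^2 (v-u)^{-2}\delta^2(v-u)$ relies on second-derivative control of the \emph{univariate} $\delta^2$, but in the non-stationary setting $\Theta^\varepsilon$ is the planar increment of the \emph{bivariate} $\delta^2$; the link between them is only through the total-variation measure condition~(\ref{nhmubirdiecond}), which gives an integrated rather than pointwise bound. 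Your route can likely be made to work with enough care under the extra monotonicity/concavity hypotheses of Theorem~\ref{ForItoThm}, but it is substantially longer and duplicates effort that the paper has already invested elsewhere.
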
 
 
\textbf{Proof of Lemma \ref{Lemma54}.} As in \cite{GNRV}, we have%
\[ 
\left\vert \mathbf{E}\left[  Z_{3}Z_{4}\right]  \right\vert ^{m-2j}\leq 
cst\cdot\left\vert \mathbf{E}\left[  G_{3}G_{4}\right]  \right\vert 
^{m-2j}+cst\cdot\left\vert \mathbf{E}\left[  \Gamma_{3}\Gamma_{4}\right] 
\right\vert ^{m-2j}. 
\] 
The required estimate for the term corresponding to $\left\vert \mathbf{E}%
\left[  \Gamma_{3}\Gamma_{4}\right]  \right\vert ^{m-2j}$ follows by 
Cauchy-Schwarz's inequality and Lemma \ref{Lemma53}. For the term 
corresponding to $\left\vert \mathbf{E}\left[  G_{3}G_{4}\right]  \right\vert 
^{m-2j}$, we recognize that $\mathbf{E}\left[  G_{3}G_{4}\right]  $ is the 
negative planar increment $\Theta^{\varepsilon}\left(  u,v\right)  $ defined 
in (\ref{defDelta}). Thus the corresponding term was already considered in the 
proof of Theorem (\ref{nonhomoggauss}). More specifically, up to the factor 
$\varepsilon^{2}\delta^{-4j}\left(  \varepsilon\right)  $, we now have to 
estimated the same integral as in Step 2 of that theorem's proof: see 
expression (\ref{JjOD}) for the term we called $J_{j,OD}$. This means that%
\[ 
\iint_{D_{\varepsilon}}\left\vert \mathbf{E}\left[  G_{3}G_{4}\right] 
\right\vert ^{m-2j}dudv\leq\frac{\varepsilon^{2}}{\delta^{4j}\left( 
\varepsilon\right)  }J_{j,OD}\leq\varepsilon^{2}\left\vert \mu\right\vert 
\left(  OD\right)  \delta^{2\left(  m-2j-1\right)  }\left(  \varepsilon 
\right)  . 
\] 
Our hypotheses borrowed from Theorem (\ref{nonhomoggauss}) that $\left\vert 
\mu\right\vert \left(  OD\right)  \leq cst\cdot\varepsilon^{1/m-1}$ and that 
$\delta^{2}\left(  \varepsilon\right)  =o\left(  r^{1/\left(  2m\right) 
}\right)  $ now imply that the above is $\ll\varepsilon\delta^{2\left( 
m-2j\right)  }\left(  \varepsilon\right)  $, concluding the lemma's 
proof.\hfill$\square$\vspace{0.15in} 
 
\noindent\emph{Step 4. Conclusion}. The remainder of the proof of the theorem 
is to check that Lemmas \ref{Lemma53} and \ref{Lemma54} do imply the claim of 
the theorem; this is done exactly as in Steps 3 and 4 of the proof of Theorem 
4.1 in \cite{GNRV}. Since such a task is only bookkeeping, we omit it, 
concluding the proof of Theorem \ref{ForItoThm}, modulo the proof of Lemma 
\ref{Lemma53} which is found in the appendix.\hfill$\blacksquare$ 
 
\begin{center} 
\textbf{Acknowledgements} 
\end{center} 
 
\noindent The work of F. Russo was partially supported by the ANR Project 
MASTERIE 2010 BLAN-0121-01. The work of F. Viens is partially supported by NSF 
DMS grant 0907321. Constructive comments by referees and editors are 
gratefully acknowledged and resulted in several improvements.

\section{Appendix\label{APP}} 
 
\begin{proof} 
[Proof of Lemma \ref{lemma1}]The formula in the lemma is an easy consequence 
of the following formula, which can be found as Lemma 5.2 in \cite{GNRV}: for 
any centered jointly Gaussian pair of r.v.'s $\left(  Y,Z\right)  $, we have 
$\mathbf{E}\left[  Y^{m}Z^{m}\right]  =\sum_{j=0}^{\left(  m-1\right) 
/2}c_{j}\mathbf{E}\left[  YZ\right]  ^{m-2j}\ Var\left[  X\right]  ^{j}%
$\ $Var\left[  Y\right]  ^{j}.$ To see that the $c_{j}$'s are positive, note 
that one can decompose each odd monomial into the basis of odd-order Hermite 
polynomials: $x^{m}=\sum_{j=0}^{\left(  m-1\right)  /2}a_{2j+1}H_{2j+1}\left( 
x\right)  $, from whence it follows, thanks to the orthogonality of Hermite 
polynomials of Gaussian rv's, that $c_{j}=\left(  a_{2j+1}\right)  ^{2}%
$.$\vspace*{0.1in}$ 
\end{proof} 
 
\begin{proof} 
[\noindent Proof of Lemma \ref{X3eExact}]The proof of this lemma is 
elementary. It follows from two uses of the multiplication formula for Wiener 
integrals \cite[Proposition 1.1.3]{Nbook}, for instance. All details are left 
to the reader.$\vspace*{0.1in}$ 
\end{proof} 
 
\begin{proof} 
[Proof of Lemma \ref{I1pro}]Reintroducing the notation $X$ and $\Theta$ into 
the formula in Lemma \ref{X3eExact}, we get%
\[ 
\mathcal{I}_{1}=\frac{3}{\varepsilon}\int_{0}^{T}ds\left(  X\left( 
s+\varepsilon\right)  -X\left(  s\right)  \right)  Var\left(  X\left( 
s+\varepsilon\right)  -X\left(  s\right)  \right) 
\] 
and therefore, 
\[ 
\mathbf{E}\left[  \left\vert \mathcal{I}_{1}\right\vert ^{2}\right]  =\frac 
{9}{\varepsilon^{2}}\int_{0}^{T}\int_{0}^{t}dtds\Theta^{\varepsilon}\left( 
s,t\right)  Var\left(  X\left(  t+\varepsilon\right)  -X\left(  t\right) 
\right)  Var\left(  X\left(  s+\varepsilon\right)  -X\left(  s\right) 
\right) 
\] 
Using the variances of fBm, writing $H$ instead of $1/6$ to improve 
readability,%
\begin{align*} 
\mathbf{E}\left[  \left\vert \mathcal{I}_{1}\right\vert ^{2}\right]   & 
=\frac{9}{2}\varepsilon^{-2+4H}\int_{0}^{T}\int_{0}^{T}dtds~Cov\left[ 
X\left(  t+\varepsilon\right)  -X\left(  t\right)  ;X\left(  s+\varepsilon 
\right)  -X\left(  s\right)  \right] \\ 
&  =\frac{9}{2}\varepsilon^{-2+4H}~Var\left[  \int_{0}^{T}\left(  X\left( 
t+\varepsilon\right)  -X\left(  t\right)  \right)  dt\right] \\ 
&  =\frac{9}{2}\varepsilon^{-2+4H}~Var\left[  \int_{T}^{T+\varepsilon}X\left( 
t\right)  dt-\int_{0}^{\varepsilon}X\left(  t\right)  dt\right]  . 
\end{align*} 
Bounding the variance of the difference by twice the sum of the variances, 
\[ 
\mathbf{E}\left[  \left\vert \mathcal{I}_{1}\right\vert ^{2}\right] 
\leq9\varepsilon^{-2+4H}\left(  \int_{T}^{T+\varepsilon}\int_{T}%
^{T+\varepsilon}T^{2H}dsdt+\int_{0}^{\varepsilon}\int_{0}^{\varepsilon 
}\varepsilon^{2H}dsdt\right)  =O\left(  \varepsilon^{4H}\right)  , 
\] 
proving Lemma \ref{I1pro}.$\vspace*{0.1in}$ 
\end{proof} 
 
\begin{proof} 
[Proof of Lemma \ref{I3pro}]By the technique at the start of the proof of 
Lemma \ref{I1pro}, the product formula in \cite[Proposition 1.1.3]{Nbook}, and 
the covariance of fBm, we first get%
 
\[ 
\mathcal{I}_{3}:=\frac{6}{\varepsilon}\int_{0}^{T}dW\left(  s_{3}\right) 
\int_{0}^{s_{3}}dW\left(  s_{2}\right)  \int_{0}^{s_{2}}dW\left( 
s_{1}\right)  \int_{0}^{T}\left[  \prod_{k=1}^{3}\Delta G_{s}\left( 
s_{k}\right)  \right]  ds. 
\]%
\begin{align*} 
\mathbf{E}\left[  \left\vert \mathcal{I}_{3}\right\vert ^{2}\right]   & 
=\frac{12}{\varepsilon^{2}}\int_{0}^{T}\int_{0}^{t}dtds~\left(  \Theta 
^{\varepsilon}\left(  s,t\right)  \right)  ^{3}\\ 
&  =\frac{6}{\varepsilon^{2}}\int_{0}^{T}\int_{0}^{t}dtds~\left(  \left\vert 
t-s+\varepsilon\right\vert ^{2H}+\left\vert t-s-\varepsilon\right\vert 
^{2H}-2\left\vert t-s\right\vert ^{2H}\right)  ^{3}. 
\end{align*} 
We must take care of the absolute values, i.e. of whether $\varepsilon$ is 
greater or less than $t-s$. We define the \textquotedblleft 
off-diagonal\textquotedblright\ portion of $\mathbf{E}\left[  \left\vert 
\mathcal{I}_{3}\right\vert ^{2}\right]  $ as 
\[ 
\mathcal{ODI}_{3}:=6\varepsilon^{-2}\int_{2\varepsilon}^{T}\int_{0}%
^{t-2\varepsilon}dtds\left(  \left\vert t-s+\varepsilon\right\vert 
^{2H}+\left\vert t-s-\varepsilon\right\vert ^{2H}-2\left\vert t-s\right\vert 
^{2H}\right)  ^{3}. 
\] 
For $s,t$ in the integration domain for the above integral, since $\bar 
{t}:=t-s>2\varepsilon$, by two iterated applications of the Mean Value Theorem 
for the function $x^{2H}$ on the intervals $[\bar{t}-\varepsilon,\bar{t}]$ and 
$[\bar{t},\bar{t}+\varepsilon]$,%
\[ 
\left\vert \bar{t}+\varepsilon\right\vert ^{2H}+\left\vert \bar{t}%
-\varepsilon\right\vert ^{2H}-2\bar{t}^{2H}=2H\left(  2H-1\right) 
\varepsilon\left(  \xi_{1}-\xi_{2}\right)  \xi^{2H-2}%
\] 
for some $\xi_{2}\in\lbrack\bar{t}-\varepsilon,\bar{t}],$ $\xi_{1}\in 
\lbrack\bar{t},\bar{t}+\varepsilon]$, and $\xi\in\lbrack\xi_{1},\xi_{2}]$, and 
therefore 
\begin{align*} 
\left\vert \mathcal{ODI}_{3}\right\vert  &  \leq384H^{3}\left\vert 
2H-1\right\vert ^{3}\varepsilon^{-2}\int_{2\varepsilon}^{T}\int_{0}%
^{t-2\varepsilon}\left(  \varepsilon\cdot2\varepsilon\cdot\left( 
t-s-\varepsilon\right)  ^{2H-2}\right)  ^{3}dtds\\ 
&  \leq\frac{384H^{3}\left\vert 2H-1\right\vert ^{3}}{5-6H}T\varepsilon 
^{6H-1}=\frac{32}{243}T. 
\end{align*} 
where in the last line we substituted $H=1/6$. Thus the \textquotedblleft 
off-diagonal\textquotedblright\ term is bounded. The diagonal part of 
$\mathcal{I}_{3}$ is%
\begin{align*} 
\mathcal{DI}_{3}  &  :=6\varepsilon^{-2}\int_{0}^{T}\int_{t-2\varepsilon}%
^{t}dtds\left(  \left\vert t-s+\varepsilon\right\vert ^{2H}+\left\vert 
t-s-\varepsilon\right\vert ^{2H}-2\left\vert t-s\right\vert ^{2H}\right) 
^{3}\\ 
&  =6\varepsilon^{-1+6H}T\int_{0}^{2}dr\left(  \left\vert r+1\right\vert 
^{2H}+\left\vert r-1\right\vert ^{2H}-2\left\vert r\right\vert ^{2H}\right) 
^{3}dr=CT 
\end{align*} 
where, having substituted $H=1/6$, yields that $C$ is a universal constant. 
Thus the diagonal part $\mathcal{DI}_{3}$ of $\mathbf{E}[\mathbf{|}%
\mathcal{I}_{3}|^{2}]$ is constant. This proves that $\mathcal{I}_{3}$ is 
bounded in $L^{2}\left(  \Omega\right)  $, as announced. To conclude that it 
cannot converge in $L^{2}\left(  \Omega\right)  $, recall that from 
\cite[Theorem 4.1 part (2)]{GNRV}, $[X,3]_{\varepsilon}\left(  T\right) 
=\mathcal{I}_{1}+\mathcal{I}_{3}$ converges in distribution to a 
non-degenerate normal law. By Lemma \ref{I1pro}, $\mathcal{I}_{1}$ converges 
to $0$ in $L^{2}\left(  \Omega\right)  $. Therefore, $\mathcal{I}_{3}$ 
converges in distribution to a non-degenerate normal law; if it also converged 
in $L^{2}\left(  \Omega\right)  $, since the 3rd Wiener chaos is closed in 
$L^{2}\left(  \Omega\right)  $, the limit would have to be in that same chaos, 
and thus would not have a non-degenerate normal law.$\vspace*{0.1in}$ 
\end{proof} 
 
\begin{proof} 
[Proof of Theorem \ref{nonhomoggauss}] 
 
\noindent\emph{Step 0: setup.} Recall the result of Lemma \ref{lemma1}, where 
now we express $Var\left[  X\left(  t+\varepsilon\right)  -X\left(  t\right) 
\right]  =\delta^{2}\left(  t,t+\varepsilon\right)  $ and 
\begin{equation} 
\Theta^{\varepsilon}\left(  s,t\right)  =\mu\left(  \lbrack s,s+\varepsilon 
]\times\lbrack t,t+\varepsilon)\right)  =\int_{s}^{s+\varepsilon}\int 
_{t}^{t+\varepsilon}\mu\left(  dudv\right)  . \label{Thetamubirdie}%
\end{equation} 
We again separate the diagonal term from the off-diagonal term, although this 
time the diagonal is twice as wide: it is defined as $\{\left(  s,t\right) 
:0\leq t-2\varepsilon\leq s\leq t\}$.\vspace{0.1in} 
 
\noindent\emph{Step 1: diagonal.} Using Cauchy-Schwarz's inequality which 
implies $\left\vert \Theta^{\varepsilon}\left(  s,t\right)  \right\vert 
\leq\delta\left(  s,s+\varepsilon\right)  \delta\left(  t,t+\varepsilon 
\right)  $, and bounding each term $\delta\left(  s,s+\varepsilon\right)  $ by 
$\delta\left(  \varepsilon\right)  $, the diagonal portion of $\mathbf{E}%
\left[  \left(  [X,m]_{\varepsilon}\left(  T\right)  \right)  ^{2}\right]  $ 
can be bounded above, in absolute value, by%
\[ 
\frac{1}{\varepsilon^{2}}\sum_{j=0}^{\left(  m-1\right)  /2}c_{j}%
\int_{2\varepsilon}^{T}dt\int_{t-2\varepsilon}^{t}ds\delta^{2m}\left( 
\varepsilon\right)  =cst\cdot\varepsilon^{-1}\delta^{2m}\left(  \varepsilon 
\right)  . 
\] 
Hypothesis (\ref{nhdeltacond}) implies that this converges to $0$ with 
$\varepsilon$. The case of $t\leq2\varepsilon$ works equally easily.\vspace 
{0.1in} 
 
\noindent\emph{Step 2: off diagonal.} The off-diagonal contribution is the sum 
for $j=0,\cdots,\left(  m-1\right)  /2$ of the terms%
\begin{equation} 
J_{j,OD}=\varepsilon^{-2}c_{j}\int_{2\varepsilon}^{T}dt\int_{0}%
^{t-2\varepsilon}ds\delta^{2j}\left(  s,s+\varepsilon\right)  \delta 
^{2j}\left(  t,t+\varepsilon\right)  \Theta^{\varepsilon}\left(  s,t\right) 
^{m-2j} \label{JjOD}%
\end{equation}

\noindent\emph{Step 2.1: term }$J_{\left(  m-1\right)  /2,OD}$. This is the 
dominant term. Denoting $c=\left\vert c_{\left(  m-1\right)  /2}\right\vert $, 
we have%
\[ 
\left\vert J_{\left(  m-1\right)  /2,OD}\right\vert \leq\frac{c\delta 
^{2m-2}\left(  \varepsilon\right)  }{\varepsilon^{2}}\int_{2\varepsilon}%
^{T}dt\int_{0}^{t-2\varepsilon}ds\left\vert \Theta^{\varepsilon}\left( 
s,t\right)  \right\vert . 
\] 
We estimate the integral, using the formula (\ref{Thetamubirdie}) and Fubini's 
theorem:%
\begin{align*} 
&  \int_{2\varepsilon}^{T}dt\int_{0}^{t-2\varepsilon}ds\left\vert 
\Theta^{\varepsilon}\left(  s,t\right)  \right\vert =\int_{2\varepsilon}%
^{T}dt\int_{0}^{t-2\varepsilon}ds\left\vert \int_{s}^{s+\varepsilon}\int 
_{t}^{t+\varepsilon}\mu\left(  dudv\right)  \right\vert \\ 
&  \leq\int_{2\varepsilon}^{T}dt\int_{0}^{t-2\varepsilon}ds\int_{s}%
^{s+\varepsilon}\int_{t}^{t+\varepsilon}\left\vert \mu\right\vert \left( 
dudv\right)  =\int_{2\varepsilon}^{T+\varepsilon}\int_{0}^{v\wedge 
(T-\varepsilon)}\left\vert \mu\right\vert \left(  dudv\right)  \int 
_{2\varepsilon\vee\left(  v-\varepsilon\right)  \vee(u+\varepsilon)}^{v\wedge 
T}\int_{0\vee(u-\varepsilon)}^{u\wedge\left(  t-2\varepsilon\right)  }ds\ dt\\ 
&  \leq\int_{2\varepsilon}^{T+\varepsilon}\int_{0}^{v-\varepsilon}\left\vert 
\mu\right\vert \left(  dudv\right)  \int_{v-\varepsilon}^{v}\int 
_{u-\varepsilon}^{u}ds\ dt=\varepsilon^{2}\int_{2\varepsilon}^{T+\varepsilon 
}\int_{0}^{v-\varepsilon}\left\vert \mu\right\vert \left(  dudv\right)  . 
\end{align*} 
Hence we have%
\[ 
J_{\left(  m-1\right)  /2,OD}\leq c\delta^{2m-2}\left(  \varepsilon\right) 
\int_{v=2\varepsilon}^{T+\varepsilon}\int_{u=0}^{v-\varepsilon}\left\vert 
\mu\right\vert \left(  dudv\right)  \leq c\delta^{2m-2}\left(  \varepsilon 
\right)  \left\vert \mu\right\vert \left(  OD\right)  , 
\] 
which again converges to $0$ by hypothesis as $\varepsilon$ goes to 
$0$.\vspace{0.1in} 
 
\noindent\emph{Step 2.2: other }$J_{j,OD}$ \emph{terms}. Let now $j<\left( 
m-1\right)  /2$. Using Cauchy-Schwarz's inequality for all but one of the 
$m-2j$ factors $\Theta$ in the expression (\ref{JjOD}) for $J_{j,OD}$, which 
is allowed because $m-2j\geq1$ here, exploiting the bounds on the variance 
terms via the univariate function $\delta$, we have%
\[ 
\left\vert J_{j,OD}\right\vert \leq\delta^{2m-2}\left(  \varepsilon\right) 
c_{j}\varepsilon^{-2}\int_{2\varepsilon}^{T}dt\int_{0}^{t-2\varepsilon 
}ds\left\vert \Theta^{\varepsilon}\left(  s,t\right)  \right\vert , 
\] 
which is the same term we estimated in Step 2.1. This finishes the proof of 
the theorem.$\vspace*{0.1in}$ 
\end{proof} 
 
\begin{proof} 
[Proof of Theorem \ref{MartThm}]\noindent\emph{Step 0: setup.} We use an 
expansion for powers of martingales written explicitly at Corollary 2.18 of 
\cite{RE}. For any integer $k\in\lbrack0,\left[  m/2\right]  ]$, let 
$\Sigma_{m}^{k}$ be the set of permutations $\sigma$ of $m-k$ defined as those 
for which the first $k$ terms $\sigma^{-1}\left(  1\right)  ,\sigma 
^{-1}\left(  2\right)  ,\cdots,\sigma^{-1}\left(  k\right)  $ are chosen 
arbitrarily and the next $m-2k$ terms are chosen arbitrarily among the 
remaining integers $\left\{  1,2,\cdots,m-k\right\}  \setminus\left\{ 
\sigma^{-1}\left(  1\right)  ,\sigma^{-1}\left(  2\right)  ,\cdots,\sigma 
^{-1}\left(  k\right)  \right\}  $. Let $Y$ be a fixed square-integrable 
martingale. We define the process $Y_{\sigma,\ell}$ (denoted in the above 
reference by $\sigma_{Y}^{\ell}$) by setting, for each $\sigma\in\Sigma 
_{m}^{k}$ and each $\ell=1,2,\cdots,m-k$,%
\[ 
Y_{\sigma,\ell}\left(  t\right)  =\left\{ 
\begin{array} 
[c]{c}%
\left[  Y\right]  \left(  t\right)  \ \mbox{ if }\ \sigma\left(  \ell\right) 
\in\left\{  1,2,\cdots,k\right\} \\ 
Y\left(  t\right)  \ \mbox{ if }\ \sigma\left(  \ell\right)  \in\left\{ 
k+1,\cdots,m-k\right\}  . 
\end{array} 
\right. 
\] 
From Corollary 2.18 of \cite{RE}, we then have for all $t\in\lbrack0,T]$%
\[ 
\left(  Y_{t}\right)  ^{m}=\sum_{k=0}^{[m/2]}\frac{m!}{2^{k}}\sum_{\sigma 
\in\Sigma_{m}^{k}}\int_{0}^{t}\int_{0}^{u_{m-k}}\cdots\int_{0}^{u_{2}%
}dY_{\sigma,1}\left(  u_{1}\right)  \ dY_{\sigma,2}\left(  u_{2}\right) 
\cdots dY_{\sigma,m-k}\left(  u_{m-k}\right)  . 
\]

We use this formula to evaluate%
\[ 
\lbrack X,m]_{\varepsilon}\left(  T\right)  =\frac{1}{\varepsilon}\int_{0}%
^{T}ds\left(  X\left(  s+\varepsilon\right)  -X\left(  s\right)  \right)  ^{m}%
\] 
by noting that the increment $X\left(  s+\varepsilon\right)  -X\left( 
s\right)  $ is the value at time $T$ of the martingale $Y_{t}:=\int_{0}%
^{t}\Delta G_{s}\left(  u\right)  dM\left(  u\right)  $ where we set%
\[ 
\Delta G_{s}\left(  u\right)  :=G\left(  s+\varepsilon,u\right)  -G\left( 
s,u\right)  . 
\] 
Hence%
\begin{align*} 
&  \left(  X\left(  s+\varepsilon\right)  -X\left(  s\right)  \right)  ^{m}\\ 
&  =\sum_{k=0}^{[m/2]}\frac{m!}{2^{k}}\sum_{\sigma\in\Sigma_{m}^{k}}\int 
_{0}^{T}\int_{0}^{u_{m-k}}\cdots\int_{0}^{u_{2}}d\left[  M\right]  \left( 
u_{\sigma\left(  1\right)  }\right)  \left\vert \Delta G_{s}\left( 
u_{\sigma\left(  1\right)  }\right)  \right\vert ^{2}\cdots d\left[  M\right] 
\left(  u_{\sigma\left(  k\right)  }\right)  \left\vert \Delta G_{s}\left( 
u_{\sigma\left(  k\right)  }\right)  \right\vert ^{2}\\ 
&  dM\left(  u_{\sigma\left(  k+1\right)  }\right)  \Delta G_{s}\left( 
u_{\sigma\left(  k+1\right)  }\right)  \cdots dM\left(  u_{\sigma\left( 
m-k\right)  }\right)  \Delta G_{s}\left(  u_{\sigma\left(  m-k\right) 
}\right)  . 
\end{align*} 
Therefore we can write%
\begin{align*} 
&  [X,m]_{\varepsilon}\left(  T\right) \\ 
&  =\frac{1}{\varepsilon}\sum_{k=0}^{[m/2]}\frac{m!}{2^{k}}\sum_{\sigma 
\in\Sigma_{m}^{k}}\int_{0}^{T}\int_{0}^{u_{m-k}}\cdots\int_{0}^{u_{2}}d\left[ 
M\right]  \left(  u_{\sigma\left(  1\right)  }\right)  \cdots d\left[ 
M\right]  \left(  u_{\sigma\left(  k\right)  }\right)  dM\left( 
u_{\sigma\left(  k+1\right)  }\right)  \cdots dM\left(  u_{\sigma\left( 
m-k\right)  }\right) \\ 
&  \left[  \Delta G_{\cdot}\left(  u_{\sigma\left(  k+1\right)  }\right) 
;\cdots;\Delta G_{\cdot}\left(  u_{\sigma\left(  m-k\right)  }\right)  ;\Delta 
G_{\cdot}\left(  u_{\sigma\left(  1\right)  }\right)  ;\Delta G_{\cdot}\left( 
u_{\sigma\left(  1\right)  }\right)  ;\cdots;\Delta G_{\cdot}\left( 
u_{\sigma\left(  k\right)  }\right)  ;\Delta G_{\cdot}\left(  u_{\sigma\left( 
k\right)  }\right)  \right]  , 
\end{align*} 
where we have used the notation 
\[ 
\left[  f_{1},f_{2},\cdots,f_{m}\right]  :=\int_{0}^{T}f_{1}\left(  s\right) 
f_{2}\left(  s\right)  \cdots f_{m}\left(  s\right)  ds. 
\] 
To calculate the expected square of the above, we will bound it above by the 
sum over $k$ and $\sigma$ of the expected square of each term. Writing squares 
of Lebesgue integrals as double integrals, and using It\^{o}'s formula, each 
term's expected square is thus, up to $\left(  m,k\right)  $-dependent 
multiplicative constants, equal to the expression%
\begin{align} 
K  &  =\frac{1}{\varepsilon^{2}}\int_{u_{m-k}=0}^{T}\int_{u_{m-k}^{\prime}%
=0}^{T}\int_{u_{m-k-1}=0}^{u_{m-k}}\int_{u_{m-k-1}^{\prime}=0}^{u_{m-k}}%
\cdots\int_{u_{1}=0}^{u_{2}}\int_{u_{1}^{\prime}=0}^{u_{2}}\nonumber\\ 
&  \mathbf{E}\left[  d\left[  M\right]  ^{\otimes k}\left(  u_{\sigma\left( 
1\right)  },\cdots,u_{\sigma\left(  k\right)  }\right)  d\left[  M\right] 
^{\otimes k}\left(  u_{\sigma\left(  1\right)  }^{\prime},\cdots 
,u_{\sigma\left(  k\right)  }^{\prime}\right)  d\left[  M\right] 
^{\otimes\left(  m-2k\right)  }\left(  u_{\sigma\left(  k+1\right)  }%
,\cdots,u_{\sigma\left(  m-k\right)  }\right)  \right] \nonumber\\ 
&  \cdot\left[  \Delta G_{\cdot}\left(  u_{\sigma\left(  k+1\right)  }\right) 
;\cdots;\Delta G_{\cdot}\left(  u_{\sigma\left(  m-k\right)  }\right)  ;\Delta 
G_{\cdot}\left(  u_{\sigma\left(  1\right)  }\right)  ;\Delta G_{\cdot}\left( 
u_{\sigma\left(  1\right)  }\right)  ;\cdots;\Delta G_{\cdot}\left( 
u_{\sigma\left(  k\right)  }\right)  ;\Delta G_{\cdot}\left(  u_{\sigma\left( 
k\right)  }\right)  \right] \nonumber\\ 
&  \cdot\left[  \Delta G_{\cdot}\left(  u_{\sigma\left(  k+1\right)  }\right) 
;\cdots;\Delta G_{\cdot}\left(  u_{\sigma\left(  m-k\right)  }\right)  ;\Delta 
G_{\cdot}\left(  u_{\sigma\left(  1\right)  }^{\prime}\right)  ;\Delta 
G_{\cdot}\left(  u_{\sigma\left(  1\right)  }^{\prime}\right)  ;\cdots;\Delta 
G_{\cdot}\left(  u_{\sigma\left(  k\right)  }^{\prime}\right)  ;\Delta 
G_{\cdot}\left(  u_{\sigma\left(  k\right)  }^{\prime}\right)  \right]  , 
\label{crazycat}%
\end{align} 
modulo the fact that one may remove the integrals with respect to those 
$u_{j}^{\prime}$'s not represented among $\{u_{\sigma\left(  1\right) 
}^{\prime},\cdots,u_{\sigma\left(  k\right)  }^{\prime}\}$. If we can show 
that for all $k\in\left\{  0,1,2,\cdots,\left[  m/2\right]  \right\}  $ and 
all $\sigma\in\Sigma_{m}^{k}$, the above expression $K=K_{m,k,\sigma}$ tends 
to $0$ as $\varepsilon$ tends to $0$, the theorem will be proved. 
 
A final note about notation. The bracket notation in the last two lines of the 
expression (\ref{crazycat}) above means that we have the product of two 
separate Riemann integrals over $s\in\lbrack0,T]$. Below we will denote these 
integrals as being with respect to $s\in\lbrack0,T]$ and $t\in\lbrack 
0,T]$.\bigskip 
 
\noindent\emph{Step 1: diagonal.} As in Step 1 of the proofs of Theorems 
\ref{HomogGauss} and \ref{nonhomoggauss}, we can Cauchy-Schwarz to deal with 
the portion of $K_{m,k,\sigma}$ in (\ref{crazycat}) where $\left\vert 
s-t\right\vert \leq2\varepsilon$. The details are omitted.\bigskip 
 
\noindent\emph{Step 2: term for }$k=0$. When $k=0$, there is only one 
permutation $\sigma=Id$, and we have, using hypothesis (A)%
\begin{align*} 
K_{m,0,Id}  &  =\frac{1}{\varepsilon^{2}}\int_{u_{m}=0}^{T}\int_{u_{m-1}%
=0}^{u_{m}}\cdots\int_{u_{1}=0}^{u_{2}}\mathbf{E}\left[  d\left[  M\right] 
^{\otimes m}\left(  u_{1},\cdots,u_{m}\right)  \right]  \cdot\left[  \Delta 
G_{\cdot}\left(  u_{1}\right)  ;\cdots;\Delta G_{\cdot}\left(  u_{m}\right) 
\right]  ^{2}\\ 
&  \leq\frac{1}{\varepsilon^{2}}\int_{u_{m-k}=0}^{T}\int_{u_{m-k-1}%
=0}^{u_{m-k}}\cdots\int_{u_{1}=0}^{u_{2}}\Gamma^{2}\left(  u_{1}\right) 
\Gamma^{2}\left(  u_{2}\right)  \cdots\Gamma^{2}\left(  u_{m}\right)  \left[ 
\Delta G_{\cdot}\left(  u_{1}\right)  ;\cdots;\Delta G_{\cdot}\left( 
u_{m}\right)  \right]  ^{2}du_{1}du_{2}\cdots du_{m}\\ 
&  =\frac{1}{\varepsilon^{2}}\int_{u_{m-k}=0}^{T}\int_{u_{m-k-1}=0}^{u_{m-k}%
}\cdots\int_{u_{1}=0}^{u_{2}}\left[  \Delta\tilde{G}_{\cdot}\left( 
u_{1}\right)  ;\cdots;\Delta\tilde{G}_{\cdot}\left(  u_{m}\right)  \right] 
^{2}du_{1}du_{2}\cdots du_{m}. 
\end{align*} 
This is precisely the expression one gets for the term corresponding to $k=0$ 
when $M=W$, i.e. when $X$ is the Gaussian process $Z$ with kernel $\tilde{G}$. 
Hence our hypotheses from the previous two theorems guarantee that this 
expression tends to $0$.\bigskip 
 
\noindent\emph{Step 3: term for }$k=1$. Again, in this case, $\sigma=Id$, and 
we thus have, using hypothesis (A), 
\begin{align*} 
&  K_{m,1,Id}=\frac{1}{\varepsilon^{2}}\int_{u_{m-1}=0}^{T}\int_{u_{m-2}%
=0}^{u_{m-1}}\cdots\int_{u_{1}=0}^{u_{2}}\int_{u_{1}^{\prime}=0}^{u_{2}%
}\mathbf{E}\left[  d\left[  M\right]  \left(  u_{1}\right)  d\left[  M\right] 
\left(  u_{1}^{\prime}\right)  d\left[  M\right]  ^{\otimes\left(  m-2\right) 
}\left(  u_{2},\cdots,u_{m-1}\right)  \right] \\ 
&  \cdot\left[  \Delta G_{\cdot}\left(  u_{2}\right)  ;\cdots;\Delta G_{\cdot 
}\left(  u_{m-1}\right)  ;\Delta G_{\cdot}\left(  u_{1}\right)  ;\Delta 
G_{\cdot}\left(  u_{1}\right)  \right]  \cdot\left[  \Delta G_{\cdot}\left( 
u_{2}\right)  ;\cdots;\Delta G_{\cdot}\left(  u_{m-1}\right)  ;\Delta 
G_{\cdot}\left(  u_{1}^{\prime}\right)  ;\Delta G_{\cdot}\left(  u_{1}%
^{\prime}\right)  \right] \\ 
&  \leq\frac{1}{\varepsilon^{2}}\int_{u_{m-1}=0}^{T}\int_{u_{m-2}=0}^{u_{m-1}%
}\cdots\int_{u_{1}=0}^{u_{2}}\int_{u_{1}^{\prime}=0}^{u_{2}}du_{1}%
du_{1}^{\prime}du_{2}\cdots du_{m-1}\Gamma^{2}\left(  u_{1}\right)  \Gamma 
^{2}\left(  u_{1}^{\prime}\right)  \Gamma^{2}\left(  u_{2}\right) 
\cdots\Gamma^{2}\left(  u_{m}\right) \\ 
&  \cdot\left[  \left\vert \Delta G\right\vert _{\cdot}\left(  u_{2}\right) 
;\cdots;\left\vert \Delta G\right\vert _{\cdot}\left(  u_{m-1}\right) 
;\left\vert \Delta G\right\vert _{\cdot}\left(  u_{1}\right)  ;\left\vert 
\Delta G\right\vert _{\cdot}\left(  u_{1}\right)  \right]  \cdot\left[ 
\left\vert \Delta G\right\vert _{\cdot}\left(  u_{2}\right)  ;\cdots 
;\left\vert \Delta G\right\vert _{\cdot}\left(  u_{m-1}\right)  ;\left\vert 
\Delta G\right\vert _{\cdot}\left(  u_{1}^{\prime}\right)  ;\left\vert \Delta 
G\right\vert _{\cdot}\left(  u_{1}^{\prime}\right)  \right] \\ 
&  =\frac{1}{\varepsilon^{2}}\int_{u_{m-1}=0}^{T}\int_{u_{m-2}=0}^{u_{m-1}%
}\cdots\int_{u_{1}=0}^{u_{2}}\int_{u_{1}^{\prime}=0}^{u_{2}}du_{1}%
du_{1}^{\prime}du_{2}\cdots du_{m-1} \\
&  \left[  \left\vert \Delta\tilde 
{G}\right\vert _{\cdot}\left(  u_{2}\right)  ;\cdots;\left\vert \Delta 
\tilde{G}\right\vert _{\cdot}\left(  u_{m-1}\right)  ;  
   \left\vert \Delta 
\tilde{G}\right\vert _{\cdot}\left(  u_{1}\right)  ;\left\vert \Delta\tilde 
{G}\right\vert _{\cdot}\left(  u_{1}\right)  \right]  \\
&  \cdot\left[  \left\vert \Delta\tilde{G}\right\vert _{\cdot}\left( 
u_{2}\right)  ;\cdots;\left\vert \Delta\tilde{G}\right\vert _{\cdot}\left( 
u_{m-1}\right)  ;\left\vert \Delta\tilde{G}\right\vert _{\cdot}\left( 
u_{1}^{\prime}\right)  ;\left\vert \Delta\tilde{G}\right\vert _{\cdot}\left( 
u_{1}^{\prime}\right)  \right] 
\end{align*} 
Note now that the product of two bracket operators $\left[  \cdots\right] 
\left[  \cdots\right]  $ means we integrate over $0\leq s\leq t-2\varepsilon$ 
and $2\varepsilon\leq t\leq T$, and get an additional factor of $2$, since the 
diagonal term was dealt with in Step 1. 
 
In order to exploit the additional hypothesis (\ref{additional}) in our 
theorem, our first move is to use Fubini by bringing the integrals over 
$u_{1}$ all the way inside. We get%
\begin{align*} 
K_{m,1,Id}  &  \leq\frac{2}{\varepsilon^{2}}\int_{u_{m-1}=0}^{T}\int 
_{u_{m-2}=0}^{u_{m-1}}\cdots\int_{u_{2}=0}^{u_{3}}du_{2}\cdots du_{m-1}\\ 
&  \int_{t=2\varepsilon}^{T}\int_{s=0}^{t-2\varepsilon}ds\ dt\left\vert 
\Delta\tilde{G}_{s}\left(  u_{2}\right)  \right\vert \cdots\left\vert 
\Delta\tilde{G}_{s}\left(  u_{m-1}\right)  \right\vert \left\vert \Delta 
\tilde{G}_{t}\left(  u_{2}\right)  \right\vert \cdots\left\vert \Delta 
\tilde{G}_{t}\left(  u_{m-1}\right)  \right\vert \\ 
&  \int_{u_{1}=0}^{u_{2}}\int_{u_{1}^{\prime}=0}^{u_{2}}du_{1}du_{1}^{\prime 
}\left(  \Delta\tilde{G}_{s}\left(  u_{1}\right)  \right)  ^{2}\left( 
\Delta\tilde{G}_{t}\left(  u_{1}^{\prime}\right)  \right)  ^{2}. 
\end{align*} 
The term in the last line above is trivially bounded above by%
\[ 
\int_{u_{1}=0}^{T}\int_{u_{1}^{\prime}=0}^{T}du_{1}du_{1}^{\prime}\left( 
\Delta\tilde{G}_{s}\left(  u_{1}\right)  \right)  ^{2}\left(  \Delta\tilde 
{G}_{t}\left(  u_{1}^{\prime}\right)  \right)  ^{2}%
\] 
precisely equal to $Var\left[  Z\left(  s+\varepsilon\right)  -Z\left( 
s\right)  \right]  \ Var\left[  Z\left(  t+\varepsilon\right)  -Z\left( 
t\right)  \right]  $, which by hypothesis is bounded above by $\delta 
^{4}\left(  \varepsilon\right)  $. Consequently, we get%
\begin{align*} 
K_{m,1,Id}  &  \leq2\frac{\delta^{4}\left(  \varepsilon\right)  }%
{\varepsilon^{2}}\int_{u_{m-1}=0}^{T}\int_{u_{m-2}=0}^{u_{m-1}}\cdots 
\int_{u_{2}=0}^{u_{3}}du_{2}\cdots du_{m-1}\\ 
&  \int_{t=2\varepsilon}^{T}\int_{s=0}^{t-2\varepsilon}ds\ dt\left\vert 
\Delta\tilde{G}_{s}\left(  u_{2}\right)  \right\vert \cdots\left\vert 
\Delta\tilde{G}_{s}\left(  u_{m-1}\right)  \right\vert \left\vert \Delta 
\tilde{G}_{t}\left(  u_{2}\right)  \right\vert \cdots\left\vert \Delta 
\tilde{G}_{t}\left(  u_{m-1}\right)  \right\vert . 
\end{align*} 
We get an upper bound by integrating all the $u_{j}$'s over their entire range 
$[0,T]$. I.e. we have,%
\begin{align*} 
&  K_{m,1,Id}\leq\frac{\delta^{4}\left(  \varepsilon\right)  }{\varepsilon 
^{2}}\int_{t=2\varepsilon}^{T}dt\int_{s=0}^{t-2\varepsilon}ds\\ 
&  \int_{0}^{T}\int_{0}^{T}\cdots\int_{0}^{T}du_{3}\cdots du_{m-1}\left\vert 
\Delta\tilde{G}_{s}\left(  u_{3}\right)  \right\vert \cdots\left\vert 
\Delta\tilde{G}_{s}\left(  u_{m-1}\right)  \right\vert \left\vert \Delta 
\tilde{G}_{t}\left(  u_{3}\right)  \right\vert \cdots\left\vert \Delta 
\tilde{G}_{t}\left(  u_{m-1}\right)  \right\vert \\ 
&  \cdot\int_{u_{2}=0}^{T}\left\vert \Delta\tilde{G}_{t}\left(  u_{2}\right) 
\right\vert \left\vert \Delta\tilde{G}_{s}\left(  u_{2}\right)  \right\vert 
du_{2}\\ 
&  =2\frac{\delta^{4}\left(  \varepsilon\right)  }{\varepsilon^{2}}%
\int_{t=2\varepsilon}^{T}dt\int_{s=0}^{t-2\varepsilon}ds\left(  \int_{0}%
^{T}du\left\vert \Delta\tilde{G}_{s}\left(  u\right)  \right\vert \left\vert 
\Delta\tilde{G}_{t}\left(  u\right)  \right\vert \right)  ^{m-3}\cdot 
\int_{u_{2}=0}^{u_{3}}\left\vert \Delta\tilde{G}_{t}\left(  u_{2}\right) 
\right\vert \left\vert \Delta\tilde{G}_{s}\left(  u_{2}\right)  \right\vert 
du_{2}.. 
\end{align*} 
Now we use a simple Cauchy-Schwarz inequality for the integral over $u$, but 
not for $u_{2}$. Recognizing that $\int_{0}^{T}\left\vert \Delta\tilde{G}%
_{s}\left(  u\right)  \right\vert ^{2}du$ is the variance $Var\left[  Z\left( 
s+\varepsilon\right)  -Z\left(  s\right)  \right]  \leq\delta^{2}\left( 
\varepsilon\right)  $, we have%
\begin{align*} 
K_{m,1,Id}  &  \leq2\frac{\delta^{4}\left(  \varepsilon\right)  }%
{\varepsilon^{2}}\int_{t=2\varepsilon}^{T}dt\int_{s=0}^{t-2\varepsilon 
}ds\left(  \int_{0}^{T}du\left\vert \Delta\tilde{G}_{s}\left(  u\right) 
\right\vert ^{2}\right)  ^{m-3}\cdot\int_{u_{2}=0}^{u_{3}}\left\vert 
\Delta\tilde{G}_{t}\left(  u_{2}\right)  \right\vert \left\vert \Delta 
\tilde{G}_{s}\left(  u_{2}\right)  \right\vert du_{2}.\\ 
&  \leq2\frac{\delta^{4+2m-6}\left(  \varepsilon\right)  }{\varepsilon^{2}%
}\int_{t=2\varepsilon}^{T}dt\int_{s=0}^{t-2\varepsilon}ds\int_{u_{2}=0}%
^{T}\left\vert \Delta\tilde{G}_{t}\left(  u_{2}\right)  \right\vert \left\vert 
\Delta\tilde{G}_{s}\left(  u_{2}\right)  \right\vert du_{2}. 
\end{align*} 
Condition (\ref{additional}) implies immediately $K_{m,1,Id}\leq\delta 
^{2m}\left(  2\varepsilon\right)  \varepsilon^{-1}$ which tends to $0$ with 
$\varepsilon$ by hypothesis.\bigskip 
 
\noindent\emph{Step 4: }$k\geq2$. This step proceeds using the same technique 
as Step 3. Fix $k\geq2$. Now for each given permutation $\sigma$, there are 
$k$ pairs of parameters of the type $\left(  u,u^{\prime}\right)  $. Each of 
these contributes precisely a term $\delta^{4}\left(  \varepsilon\right)  $, 
as in the previous step, i.e. $\delta^{4k}\left(  \varepsilon\right)  $ 
altogether. In other words, for every $\sigma\in\Sigma_{m}^{k}$, and deleting 
the diagonal term, we have 
\begin{align*} 
&  K_{m,k,\sigma}\\ 
&  \leq2\frac{\delta^{4k}\left(  \varepsilon\right)  }{\varepsilon^{2}}%
\int_{t=2\varepsilon}^{T}dt\int_{s=0}^{t-2\varepsilon}ds\int_{0}^{T}\int 
_{0}^{u_{m-k}}\cdots\int_{0}^{u_{k+2}}du_{k+1}\cdots du_{m-k}\left[  \int 
_{0}^{T}ds\left\vert \Delta\tilde{G}_{s}\left(  u_{k+1}\right)  \right\vert 
\cdots\left\vert \Delta\tilde{G}_{s}\left(  u_{m-k}\right)  \right\vert 
\right]  ^{2}. 
\end{align*} 
Since $k\leq\left(  m-1\right)  /2$, there is at least one integral, the one 
in $u_{k+1}$, above. We treat all the remaining integrals, if any, over 
$u_{k+2},\cdots,u_{m-k}$ with Cauchy-Schwarz's inequality as in Step 3, 
yielding a contribution $\delta^{2\left(  m-2k-1\right)  }\left( 
\varepsilon\right)  $. The remaining integral over $u_{k+1}$ yields, by 
Condition (\ref{additional}), a contribution of $\delta^{2}\left( 
2\varepsilon\right)  \varepsilon$. Hence the contribution of $K_{m,k,\sigma}$ 
is again $\delta^{2m}\left(  2\varepsilon\right)  \varepsilon^{-1}$, which 
tends to $0$ with $\varepsilon$ by hypothesis, concluding the proof of the 
Theorem.$\vspace*{0.1in}$ 
\end{proof} 
 
\begin{proof} 
[Proof of Proposition \ref{Ex}]Below the value $1/\left(  2m\right)  -1/2$ is 
denoted by $\alpha$. We now show that we can apply Theorem \ref{nonhomoggauss} 
directly to the Gaussian process $Z$ given in (\ref{Zee}), which, by Theorem 
\ref{MartThm}, is sufficient, together with Condition (\ref{additional}), to 
obtain our desired conclusion. Note the assumption about $\tilde{G}$ implies 
that $s\mapsto\tilde{G}\left(  t,s\right)  $ is square-integrable, and 
therefore $Z$ is well-defined. We will prove Condition (\ref{nhdeltacond}) 
holds in Step 1; Step 2 will show Condition (\ref{nhmubirdiecond}) holds; 
Condition (\ref{additional}) will be established in Step 3.\vspace{0.1in} 
 
\noindent\emph{Step 1. Variance calculation. }We need only to show 
$\tilde{\delta}^{2}\left(  s,s+\varepsilon\right)  =o\left(  \varepsilon 
^{1/m}\right)  $ uniformly in $s$. We have, for given $s$ and $t=s+\varepsilon 
$%
\begin{align} 
\tilde{\delta}^{2}\left(  s,s+\varepsilon\right)   &  =\int_{0}^{s}\left\vert 
\left(  s+\varepsilon-r\right)  ^{\alpha}f\left(  s+\varepsilon,r\right) 
-\left(  s-r\right)  ^{\alpha}f\left(  s,r\right)  \right\vert ^{2}%
dr\nonumber\\ 
&  +\int_{s}^{s+\varepsilon}\left\vert s+\varepsilon-r\right\vert ^{2\alpha 
}f^{2}\left(  s+\varepsilon,r\right)  dr=:A+B \label{delta2withf}%
\end{align} 
Since $f^{2}\left(  s+\varepsilon,r\right)  \leq f\left(  s+\varepsilon 
-r\right)  $ and the univariate $f$ increases, in $B$ we can bound this last 
quantity by $f\left(  \varepsilon\right)  $, and we get 
\[ 
B\leq f^{2}\left(  \varepsilon\right)  \int_{0}^{\varepsilon}r^{2\alpha 
}dr=3f^{2}\left(  \varepsilon\right)  \varepsilon^{2\alpha+1}=o\left( 
\varepsilon^{1/m}\right)  . 
\]

The term $A$ is slightly more delicate to estimate. Since $f$ is increasing 
and $g$ is decreasing in $t$,%
\begin{align*} 
A  &  \leq\int_{0}^{s}f^{2}\left(  s+\varepsilon,r\right)  \left\vert \left( 
s+\varepsilon-r\right)  ^{\alpha}-\left(  s-r\right)  ^{\alpha}\right\vert 
^{2}dr=\int_{0}^{s}f^{2}\left(  \varepsilon+r\right)  \left\vert r^{\alpha 
}-\left(  r+\varepsilon\right)  ^{\alpha}\right\vert ^{2}dr\\ 
&  =\int_{0}^{\varepsilon}f^{2}\left(  \varepsilon+r\right)  \left\vert 
r^{\alpha}-\left(  r+\varepsilon\right)  ^{\alpha}\right\vert ^{2}%
dr+\int_{\varepsilon}^{s}f^{2}\left(  \varepsilon+r\right)  \left\vert 
r^{\alpha}-\left(  r+\varepsilon\right)  ^{\alpha}\right\vert ^{2}dr\\ 
&  =:A_{1}+A_{2}. 
\end{align*} 
We have, again from the univariate $f$'s increasingness, and the limit 
$\lim_{r\rightarrow0}f\left(  r\right)  =0$,%
\[ 
A_{1}\leq f^{2}\left(  2\varepsilon\right)  \int_{0}^{\varepsilon}\left\vert 
r^{\alpha}-\left(  r+\varepsilon\right)  ^{\alpha}\right\vert ^{2}dr=cst\cdot 
f^{2}\left(  2\varepsilon\right)  \varepsilon^{2\alpha+1}=o\left( 
\varepsilon^{1/m}\right)  . 
\] 
For the other part of $A$, we need to use $f$'s concavity at the point 
$2\varepsilon$ in the interval $[0,\varepsilon+r]$ (since $\varepsilon 
+r>2\varepsilon$ in this case), which implies $f\left(  \varepsilon+r\right) 
<f\left(  2\varepsilon\right)  \left(  \varepsilon+r\right)  /\left( 
2\varepsilon\right)  $. Also using the mean-value theorem for the difference 
of negative cubes, we get%
\begin{align*} 
A_{2}  &  \leq cst\cdot\varepsilon^{2}\int_{\varepsilon}^{s}f^{2}\left( 
\varepsilon+r\right)  r^{2\alpha-2}dr\leq cst\cdot\varepsilon f\left( 
2\varepsilon\right)  \int_{\varepsilon}^{s}\left(  \varepsilon+r\right) 
r^{2\alpha-2}dr\\ 
&  \leq cst\cdot\varepsilon f\left(  2\varepsilon\right)  \int_{\varepsilon 
}^{s}r^{2\alpha-1}=cst\cdot\varepsilon^{2\alpha+1}f\left(  2\varepsilon 
\right)  =o\left(  \varepsilon^{1/3}\right)  . 
\end{align*} 
This finishes the proof of Condition (\ref{nhdeltacond}).\vspace{0.1in} 
 
\noindent\emph{Step 2. Covariance calculation}. We first calculate the second 
mixed derivative $\partial^{2}\tilde{\delta}^{2}/\partial s\partial t$, where 
$\tilde{\delta}$ is the canonical metric of $Z$, because we must show 
$\left\vert \mu\right\vert \left(  OD\right)  \leq\varepsilon^{2\alpha}$, 
which is condition (\ref{nhmubirdiecond}), and $\mu\left(  dsdt\right) 
=ds\ dt\ \partial^{2}\tilde{\delta}^{2}/\partial s\partial t$. We have, for 
$0\leq s\leq t-\varepsilon$,%
\[ 
\tilde{\delta}^{2}\left(  s,t\right)  =\int_{0}^{s}\left(  g\left( 
t,s-r\right)  -g\left(  s,s-r\right)  \right)  ^{2}dr+\int_{s}^{t}g^{2}\left( 
t,r\right)  dr=:A+B. 
\] 
We calculate%
\begin{align*} 
\frac{\partial^{2}A}{\partial s\partial t}\left(  t,s\right)   & 
=2\frac{\partial g}{\partial t}\left(  t,0\right)  \left(  g\left( 
t,0\right)  -g\left(  s,0\right)  \right) \\ 
&  +\int_{0}^{s}2\frac{\partial g}{\partial t}\left(  t,s-r\right)  \left( 
\frac{\partial g}{\partial s}\left(  t,s-r\right)  -\frac{\partial g}{\partial 
t}\left(  s,s-r\right)  -\frac{\partial g}{\partial s}\left(  s,s-r\right) 
\right) \\ 
&  +\int_{0}^{s}2\left(  g\left(  t,s-r\right)  -g\left(  s,s-r\right) 
\right)  \frac{\partial^{2}g}{\partial s\partial t}\left(  t,s-r\right)  dr.\\ 
&  =A_{1}+A_{2}+A_{3}, 
\end{align*} 
and 
\[ 
\frac{\partial^{2}B}{\partial s\partial t}\left(  t,s\right)  =-2g\left( 
t,s\right)  \frac{\partial g}{\partial t}\left(  t,s\right)  . 
\]

Next, we immediately get, for the portion of $\left\vert \mu\right\vert 
\left(  OD\right)  $ corresponding to $B$, using the hypotheses of our 
proposition,%
\begin{align*} 
\int_{\varepsilon}^{T}dt\int_{0}^{t-\varepsilon}ds\left\vert \frac 
{\partial^{2}B}{\partial s\partial t}\left(  t,s\right)  \right\vert  & 
\leq2c\int_{\varepsilon}^{T}dt\int_{0}^{t-\varepsilon}dsf\left(  \left\vert 
t-s\right\vert \right)  \left\vert t-s\right\vert ^{\alpha}\left\vert 
t-s\right\vert ^{\alpha-1}\\ 
&  \leq2c\left\Vert f\right\Vert _{\infty}\int_{\varepsilon}^{T}%
dt\ \varepsilon^{2\alpha}=cst\cdot\varepsilon^{2\alpha}, 
\end{align*} 
which is of the correct order for Condition (\ref{nhmubirdiecond}). For the 
term corresponding to $A_{1}$, using our hypotheses, we have%
\[ 
\int_{\varepsilon}^{T}dt\int_{0}^{t-\varepsilon}ds\left\vert A_{1}\right\vert 
\leq2\int_{\varepsilon}^{T}dt\int_{0}^{t-\varepsilon}ds\ t^{\alpha}\left\vert 
\frac{\partial g}{\partial t}\left(  \xi_{t,s},0\right)  \right\vert 
\left\vert t-s\right\vert 
\] 
where $\xi_{t,s}$ is in the interval $\left(  s,t\right)  $. Our hypothesis 
thus implies $\left\vert \frac{\partial g}{\partial t}\left(  \xi 
_{t,s},0\right)  \right\vert \leq s^{\alpha}$, and hence%
\[ 
\int_{\varepsilon}^{T}dt\int_{0}^{t-\varepsilon}ds\left\vert A_{1}\right\vert 
\leq2T\int_{\varepsilon}^{T}dt\int_{0}^{t-\varepsilon}ds\ s^{\alpha 
-1}t^{\alpha-1}=2T\alpha^{-1}\int_{\varepsilon}^{T}dt\ t^{\alpha-1}\left( 
t-\varepsilon\right)  ^{\alpha}\leq\alpha^{-2}T^{1+2\alpha}. 
\] 
This is much smaller than the right-hand side $\varepsilon^{2\alpha}$ of 
Condition (\ref{nhmubirdiecond}), since $2\alpha=1/m-1<0$. The terms $A_{2}$ 
and $A_{3}$ are treated similarly, thanks to our hypotheses.\vspace{0.1in} 
 
\noindent\emph{Step 3: proving Condition (\ref{additional}).} We modify the 
proof of Theorem \ref{MartThm}, in particular Steps 3 and 4, so that we only 
need to prove%
\begin{equation} 
\int_{t=2\varepsilon}^{T}dt\int_{s=0}^{t-2\varepsilon}ds\int_{u=0}%
^{T}\left\vert \Delta\tilde{G}_{t}\left(  u\right)  \right\vert \left\vert 
\Delta\tilde{G}_{s}\left(  u\right)  \right\vert du\leq c\varepsilon 
^{2+2\alpha}=c\varepsilon^{1/m+1}, \label{additional2}%
\end{equation} 
instead of Condition (\ref{additional}). Indeed, for instance in Step 3, this 
new condition yields a final contribution of order $\delta^{2m-2}\left( 
\varepsilon\right)  \varepsilon^{-2}\varepsilon^{1/m+1}$. With the assumption 
on $\delta$ that we have, $\delta\left(  \varepsilon\right)  =o\left( 
\varepsilon^{1/\left(  2m\right)  }\right)  $, and hence the final 
contribution is of order $o\left(  \varepsilon^{\left(  2m-2\right)  /\left( 
2m\right)  -1+1/m}\right)  =o\left(  1\right)  $. This proves that the 
conclusion of Theorem \ref{MartThm} holds if we assume (\ref{additional2}) 
instead of Condition (\ref{additional}). 
 
We now prove (\ref{additional2}). We can write%
\begin{align*} 
&  \int_{t=2\varepsilon}^{T}dt\int_{s=0}^{t-2\varepsilon}ds\int_{u=0}%
^{T}\left\vert \Delta\tilde{G}_{t}\left(  u\right)  \right\vert \left\vert 
\Delta\tilde{G}_{s}\left(  u\right)  \right\vert du\\ 
&  =\int_{t=2\varepsilon}^{T}dt\int_{s=0}^{t-2\varepsilon}ds\int_{0}%
^{s}\left\vert g\left(  t+\varepsilon,u\right)  -g\left(  t,u\right) 
\right\vert \left\vert g\left(  s+\varepsilon,u\right)  -g\left(  s,u\right) 
\right\vert du\\ 
&  +\int_{t=2\varepsilon}^{T}dt\int_{s=0}^{t-2\varepsilon}ds\int 
_{s}^{s+\varepsilon}\left\vert g\left(  t+\varepsilon,u\right)  -g\left( 
t,u\right)  \right\vert \left\vert g\left(  s+\varepsilon,u\right) 
\right\vert du=:A+B. 
\end{align*} 
For $A$, we use the hypotheses of this proposition: for the last factor in 
$A$, we exploit the fact that $g$ is decreasing in $t$ while $f$ is increasing 
in $t$; for the other factor in\ $A$, use the bound on $\partial g/\partial 
t$; thus we have%
\[ 
A\leq\int_{t=2\varepsilon}^{T}dt\int_{s=0}^{t-2\varepsilon}\varepsilon 
\left\vert t-s\right\vert ^{\alpha-1}ds\int_{0}^{s}f\left(  s+\varepsilon 
,u\right)  \left(  \left(  s-u\right)  ^{\alpha}-\left(  s+\varepsilon 
-u\right)  ^{\alpha}\right)  du. 
\] 
We separate the integral in $u$ into two pieces, for $u\in\lbrack 
0,s-\varepsilon]$ and $u\in\lbrack s-\varepsilon,s]$. For the first integral 
in $u$, since $f$ is bounded, we have%
\[ 
\int_{0}^{s-\varepsilon}f\left(  s+\varepsilon,u\right)  \left(  \left( 
s-u\right)  ^{\alpha}-\left(  s+\varepsilon-u\right)  ^{\alpha}\right) 
du\leq\left\Vert f\right\Vert _{\infty}\varepsilon\int_{0}^{s-\varepsilon 
}\left(  s-u\right)  ^{\alpha-1}du\leq\left\Vert f\right\Vert _{\infty 
}c_{\alpha}\varepsilon^{1+\alpha}. 
\] 
For the second integral in $u$, we use the fact that $s-u+\varepsilon 
>\varepsilon$ and $s-u<\varepsilon$ implies $s-u+\varepsilon>2\left( 
s-u\right)  $, so that the negative part of the integral can be ignored, and 
thus%
\[ 
\int_{s-\varepsilon}^{s}f\left(  s+\varepsilon,u\right)  \left(  \left( 
s-u\right)  ^{\alpha}-\left(  s+\varepsilon-u\right)  ^{\alpha}\right) 
du\leq\left\Vert f\right\Vert _{\infty}\int_{s-\varepsilon}^{s}\left( 
s-u\right)  ^{\alpha}du=\left\Vert f\right\Vert _{\infty}c_{\alpha}%
\varepsilon^{1+\alpha}, 
\] 
which is the same upper bound as for the other part of the integral in $u$. 
Thus%
\[ 
A\leq cst\cdot\varepsilon^{2+\alpha}\int_{t=2\varepsilon}^{T}dt\int 
_{s=0}^{t-2\varepsilon}\left\vert t-s\right\vert ^{\alpha-1}ds\leq 
cst\cdot\varepsilon^{2+\alpha}\int_{t=2\varepsilon}^{T}dt\ \varepsilon 
^{\alpha}\leq cst\cdot\varepsilon^{2+2\alpha}=cst\cdot\varepsilon^{1/m+1}, 
\] 
which is the conclusion we needed at least for $A$. 
 
Lastly, we estimate $B$. We use the fact that $f$ is bounded, and thus 
$\left\vert g\left(  s+\varepsilon,u\right)  \right\vert \leq\left\Vert 
f\right\Vert _{\infty}\left\vert s+\varepsilon-u\right\vert ^{\alpha}$, as 
well as the estimate on the derivative of $g$ as we did in the calculation of 
$A$, yielding 
\begin{align*} 
B  &  \leq\left\Vert f\right\Vert _{\infty}\varepsilon\int_{t=2\varepsilon 
}^{T}dt\int_{s=0}^{t-2\varepsilon}ds\left\vert t-s-\varepsilon\right\vert 
^{\alpha-1}\int_{s}^{s+\varepsilon}\left\vert s+\varepsilon-u\right\vert 
^{\alpha}du\\ 
&  =cst\cdot\varepsilon^{\alpha+2}\int_{t=2\varepsilon}^{T}dt\int 
_{s=0}^{t-2\varepsilon}ds\left\vert t-s-\varepsilon\right\vert ^{\alpha-1}\\ 
&  \leq2^{1+\left\vert \alpha\right\vert }cst\cdot\varepsilon^{\alpha+2}%
\int_{t=2\varepsilon}^{T}dt\int_{s=0}^{t-2\varepsilon}ds\left\vert 
t-s\right\vert ^{\alpha-1}\leq cst\cdot\varepsilon^{2\alpha+2}=cst\cdot 
\varepsilon^{1/m+1}. 
\end{align*} 
This is the conclusion we needed for $B,$which finishes the proof of the 
proposition.$\vspace*{0.1in}$ 
\end{proof} 
 
\begin{proof} 
[Proof of Lemma \ref{Lemma53}] 
 
\emph{Step 1: Setup. }We only need to show that for all $i,j\in\left\{ 
1,2\right\}  $,%
\begin{equation} 
\iint_{D_{\varepsilon}}\left\vert r_{ij}\right\vert ^{k}dudv\leq 
cst\cdot\varepsilon\delta^{k}\left(  \varepsilon\right)  . \label{intrijk}%
\end{equation} 
Recall the function $K$ defined in \cite{GNRV} 
\begin{align*} 
K\left(  u,v\right)   &  :=\mathbf{E}\left[  \left(  X_{u+\varepsilon}%
+X_{u}\right)  \left(  X_{v+\varepsilon}+X_{v}\right)  \right] \\ 
&  =Q\left(  u+\varepsilon,v+\varepsilon\right)  +Q\left(  u,v+\varepsilon 
\right)  +Q\left(  u+\varepsilon,v\right)  +Q\left(  u,v\right)  . 
\end{align*} 
This is not to be confused with the usage of the letter $K$ in previous 
sections, to which there will be made no reference in this proof; the same 
remark hold for the notation $\Delta$ borrowed again from \cite{GNRV}, and 
used below. 
 
To follow the proof in \cite{GNRV}, we need to prove the following items for 
some constants $c_{1}$ and $c_{2}$: 
 
\begin{enumerate} 
\item $c_{1}\delta^{2}\left(  u\right)  \leq K\left(  u,u\right)  \leq 
c_{2}\delta^{2}\left(  u\right)  ;$ 
 
\item $K\left(  u,v\right)  \leq c_{2}\delta\left(  u\right)  \delta\left( 
v\right)  ;$ 
 
\item $\Delta\left(  u,v\right)  :=K\left(  u,u\right)  K\left(  v,v\right) 
-K\left(  u,v\right)  ^{2}\geq c_{1}\delta^{2}\left(  u\right)  \delta 
^{2}\left(  v-u\right)  .$ 
\end{enumerate} 
 
By the Theorem's upper bound assumption on the bivariate $\delta^{2}$ 
(borrowed from Theorem \ref{nonhomoggauss}), its assumptions on the 
monotonicity of $Q$ and the univariate $\delta$, and finally using the 
coercivity assumption (i), we have%
\begin{align*} 
K\left(  u,u\right)   &  =Q_{u}+Q_{u+\varepsilon}+2Q\left(  u,u+\varepsilon 
\right)  =2\left(  Q_{u}+Q_{u+\varepsilon}\right)  -\delta^{2}\left( 
u,u+\varepsilon\right) \\ 
&  \geq2\left(  Q_{u}+Q_{u+\varepsilon}\right)  -\delta^{2}\left( 
\varepsilon\right)  \geq\left(  4-c^{-1}\right)  Q_{u}. 
\end{align*} 
This proves the lower bound in Item 1 above. The upper bound in Item 1 is a 
special case of Item 2, which we now prove. Again, the assumption borrowed 
from Theorem \ref{nonhomoggauss}, which says that $\delta^{2}\left( 
s,t\right)  \leq\delta^{2}\left(  \left\vert t-s\right\vert \right)  $, now 
implies, for $s=0$, that%
\begin{equation} 
\delta^{2}\left(  0,u\right)  =Q_{u}\leq\delta^{2}\left(  u\right)  . 
\label{Quudelta}%
\end{equation} 
We write, via Cauchy-Schwarz's inequality and the fact that $\delta^{2}$ is 
increasing, and thanks to (\ref{Quudelta}),%
\[ 
K\left(  u,v\right)  \leq4\delta\left(  u+\varepsilon\right)  \delta\left( 
v+\varepsilon\right)  . 
\] 
However, since $\delta^{2}$ is concave with $\delta\left(  0\right)  =0$, we 
have $\delta^{2}\left(  2u\right)  /2u\leq\delta^{2}\left(  u\right)  /u$. 
Also, since we are in the set $D_{\varepsilon}$, $u+\varepsilon\leq2u$ and 
$v+\varepsilon\leq2v$. Hence%
\[ 
K\left(  u,v\right)  \leq4\delta\left(  2u\right)  \delta\left(  2v\right) 
\leq8\delta\left(  u\right)  \delta\left(  v\right)  , 
\] 
which is Item 2. 
 
We now verify Item 3 for all $u,v\in D_{\varepsilon}$ , assuming in addition 
that $v$ is not too small, specifically $v>\varepsilon^{\rho/2}$. One can 
estimate the integral in Lemma \ref{Lemma53} restricted to those values where 
$v\leq\varepsilon^{\rho/2}$ using coarser tools than we use below; we omit the 
corresponding calculations. From the definition of $K$ above, using the fact 
that, by our concavity assumptions, $Q$ is, in both variables, a sum of 
Lipschitz functions, we have, for small $\varepsilon$,%
\[ 
K\left(  u,v\right)  =4Q\left(  u,v\right)  +O\left(  \varepsilon\right)  . 
\] 
Therefore, 
\[ 
\Delta=16\left(  Q_{u}Q_{v}-Q^{2}\left(  u,v\right)  \right)  +O\left( 
\varepsilon\right)  . 
\] 
Assumption (ii) in the Theorem now implies%
\[ 
\Delta\geq16c^{\prime}\delta^{2}\left(  u\right)  \delta^{2}\left( 
v-u\right)  +O\left(  \varepsilon\right)  . 
\] 
The concavity of $Q$ and Assumption (i) imply $\delta^{2}\left(  r\right) 
\geq Q_{r}\geq cst\cdot r$. Moreover, because of the restriction on $v$, 
either $v-u>cst\cdot\varepsilon^{\rho/2}$ or $u>cst\cdot\varepsilon^{\rho/2}$. 
Therefore $\delta^{2}\left(  u\right)  \delta^{2}\left(  v-u\right)  \geq 
cst\cdot\varepsilon^{1-\rho}\varepsilon^{\rho/2}\gg\varepsilon$. Therefore, 
for $\varepsilon$ small enough, $\Delta\geq8c^{\prime}\delta^{2}\left( 
u\right)  \delta^{2}\left(  v-u\right)  $, proving Item 3.\bigskip 
 
It will now be necessary to reestimate the components of the matrix 
$\Lambda_{21}$ where we recall%
\begin{align*} 
\Lambda_{21}[11]  &  :=\mathbf{E}\left[  \left(  X_{u+\varepsilon}%
+X_{u}\right)  \left(  X_{u+\varepsilon}-X_{u}\right)  \right]  ,\\ 
\Lambda_{21}[12]  &  :=\mathbf{E}\left[  \left(  X_{v+\varepsilon}%
+X_{v}\right)  \left(  X_{u+\varepsilon}-X_{u}\right)  \right]  ,\\ 
\Lambda_{21}[21]  &  :=\mathbf{E}\left[  \left(  X_{u+\varepsilon}%
+X_{u}\right)  \left(  X_{v+\varepsilon}-X_{v}\right)  \right]  ,\\ 
\Lambda_{21}[22]  &  :=\mathbf{E}\left[  \left(  X_{v+\varepsilon}%
+X_{v}\right)  \left(  X_{v+\varepsilon}-X_{v}\right)  \right]  . 
\end{align*}

\emph{Step 2: the term }$r_{11}$. We have by the lower bound of item 1 above 
on $K\left(  u,u\right)  $,%
\[ 
\left\vert r_{11}\right\vert =\left\vert \frac{1}{\sqrt{K\left(  u,u\right) 
}}\Lambda_{21}[11]\right\vert \leq\frac{cst}{\delta\left(  u\right) 
}\left\vert \Lambda_{21}[11]\right\vert . 
\] 
To bound $\left\vert \Lambda_{21}[11]\right\vert $ above, we write%
\begin{align*} 
\left\vert \Lambda_{21}[11]\right\vert  &  =\left\vert \mathbf{E}\left[ 
\left(  X_{u+\varepsilon}+X_{u}\right)  \left(  X_{u+\varepsilon}%
-X_{u}\right)  \right]  \right\vert \\ 
&  =Q_{u+\varepsilon}-Q_{u}\leq\varepsilon\delta^{2}\left(  u\right)  /u 
\end{align*} 
where we used the facts that $Q_{u}$ is increasing and concave, and that 
$Q_{u}\leq\delta^{2}\left(  u\right)  $. Thus we have%
\[ 
\left\vert r_{11}\right\vert \leq\varepsilon~cst\frac{\delta\left(  u\right) 
}{u}. 
\] 
The result (\ref{intrijk}) for $i=j=1$ now follows by the next lemma. 
 
\begin{lemma} 
\label{deltauuk}For every $k\geq2$, there exists $c_{k}>0$ such that for every 
$\varepsilon\in(0,1)$, $\int_{\varepsilon}^{1}\left\vert \delta\left( 
u\right)  /u\right\vert ^{k}du\leq c_{k}\varepsilon\left\vert \delta\left( 
\varepsilon\right)  /\varepsilon\right\vert ^{k}$. 
\end{lemma}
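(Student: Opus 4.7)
The plan is to combine the monotonicity of $\phi(u):=\delta(u)/u$ with a direct integration when $k\geq 3$ and with a geometric decomposition based on hypothesis (iii) when $k=2$. First I would verify that $\phi$ is nonincreasing on $(0,1]$: since $\delta^2$ is concave with $\delta^2(0)=0$, a direct calculation of $(\sqrt{\delta^2})''=[2\delta^2(\delta^2)''-((\delta^2)')^2]/(4(\delta^2)^{3/2})\leq 0$ shows that $\delta$ is itself concave, and any concave function vanishing at the origin has $u\mapsto\delta(u)/u$ nonincreasing.

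For $k\geq 3$, this concavity already suffices, without appealing to (iii). Indeed, $\delta^2$ concave with $\delta^2(0)=0$ also forces $\delta^2(u)/u$ nonincreasing, hence $\delta(u)\leq \delta(\varepsilon)\sqrt{u/\varepsilon}$ for $u\geq \varepsilon$; therefore $\phi(u)^k\leq \delta(\varepsilon)^k(u\varepsilon)^{-k/2}$, and
$$\int_\varepsilon^1\phi(u)^k\,du\leq \frac{\delta(\varepsilon)^k}{\varepsilon^{k/2}}\int_\varepsilon^1 u^{-k/2}\,du\leq \frac{2}{k-2}\,\varepsilon\,\phi(\varepsilon)^k,$$
which gives $c_k=2/(k-2)$ in this range.

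The delicate case is $k=2$, for which the above estimate only produces the inadequate bound $\varepsilon\phi(\varepsilon)^2\log(1/\varepsilon)$; removing the logarithm is exactly where the strict inequality $b<1/2$ in (iii) is used. The plan here is to partition $[\varepsilon,1]$ geometrically via $u_j:=a^j\varepsilon$ for $j=0,\ldots,N$ with $N$ the smallest integer such that $u_N\geq 1$, where $a$ is the constant from (iii). Iterating (iii) one obtains $\delta(u_j)<\lambda_1^j\,\delta(\varepsilon)$ with $\lambda_1:=1+b(a-1)$, and so $\phi(u_j)<\lambda^j\phi(\varepsilon)$ with $\lambda:=\lambda_1/a<1$ (using $b<1$). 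By the monotonicity of $\phi$,
$$\int_\varepsilon^1\phi(u)^2\,du\leq \sum_{j=0}^{N-1}(u_{j+1}-u_j)\phi(u_j)^2\leq (a-1)\,\varepsilon\,\phi(\varepsilon)^2\sum_{j=0}^{N-1}(a\lambda^2)^j.$$

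The main obstacle is verifying that the geometric ratio $a\lambda^2=(1+b(a-1))^2/a$ is strictly less than $1$, since otherwise the sum diverges. A direct expansion near $a=1$ gives $a\lambda^2=1+(2b-1)(a-1)+O((a-1)^2)$, whose linear coefficient is strictly negative precisely because $b<1/2$; hence $a\lambda^2<1$ whenever $a$ is sufficiently close to $1$. This is the essential use of the strict inequality, and it explains why (iii) is stated with $b<1/2$ rather than $b\leq 1/2$. Since the secant slope $(\delta(au)-\delta(u))/((a-1)u)$ is nonincreasing in $a$ by concavity of $\delta$, condition (iii) automatically propagates to all $a'\geq a$ with the same $b$, so the argument is applied with an $a$ chosen in the convergent range. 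Once $a\lambda^2<1$ is secured, $c_2:=(a-1)/(1-a\lambda^2)$ completes the proof.
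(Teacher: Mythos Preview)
Your treatment of $k\geq 3$ is correct and is a genuine simplification relative to the paper: using only the concavity of $\delta^2$ you get $\delta(u)\leq\delta(\varepsilon)\sqrt{u/\varepsilon}$ and a convergent power integral, with no appeal to hypothesis (iii). The paper instead runs a single geometric-series argument uniformly over all $k\geq 2$, which is tidier to state but obscures the fact that (iii) is only truly needed at $k=2$.

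For $k=2$ your argument is essentially the paper's: partition $[\varepsilon,1]$ at the points $a^j\varepsilon$, iterate (iii) to obtain $\phi(a^j\varepsilon)<\lambda^j\phi(\varepsilon)$ with $\lambda=(1+b(a-1))/a$, and reduce to convergence of $\sum_j(a\lambda^k)^j$, checked by expanding near $a=1$. There is, however, a directional slip in your last paragraph. You correctly note that concavity of $\delta$ makes the secant slope nonincreasing in $a$, so (iii) propagates from the hypothesis constant $a$ to every \emph{larger} $a'$. But the convergent range $a\lambda^2<1$ is the interval $1<a<1+(1-2b)/b^2$, i.e.\ values \emph{close} to $1$; enlarging $a$ moves you out of this interval, not into it. So your propagation remark does not justify choosing $a$ near $1$ when the given $a$ is large. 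The paper's own proof has the same imprecision (it computes the derivative of $f(a)=a K_{a,b}^k$ at $a=1$ rather than checking $f$ at the given $a$), so you are not worse off; in practice the examples covered by the theorem satisfy (iii) for $a$ arbitrarily close to $1$, and a clean fix is simply to assume this, or equivalently to require $a<1+(1-2b)/b^2$ in hypothesis (iii).
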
 
 
\textbf{Proof of lemma \ref{deltauuk}.} Our hypothesis (iii) can be rewritten 
as%
\[ 
\frac{\delta\left(  au\right)  }{au}<\left(  \frac{1+\left(  a-1\right)  b}%
{a}\right)  \frac{\delta\left(  u\right)  }{u}=:K_{a,b}\frac{\delta\left( 
u\right)  }{u}. 
\] 
The concavity of $\delta$ also implies that $\delta\left(  u\right)  /u$ is 
increasing. Thus we can write%
\begin{align*} 
\int_{\varepsilon}^{1}\left\vert \frac{\delta\left(  u\right)  }{u}\right\vert 
^{k}du  &  \leq\sum_{j=0}^{\infty}\int_{\varepsilon a^{j}}^{\varepsilon 
a^{j+1}}\left\vert \frac{\delta\left(  u\right)  }{u}\right\vert ^{k}%
du\leq\sum_{j=0}^{\infty}\left(  \varepsilon a^{j+1}-\varepsilon a^{j}\right) 
\left\vert K_{a,b}\right\vert ^{jk}\left\vert \frac{\delta\left( 
\varepsilon\right)  }{\varepsilon}\right\vert ^{k}\\ 
&  =\varepsilon\left(  a-1\right)  \left\vert \frac{\delta\left( 
\varepsilon\right)  }{\varepsilon}\right\vert ^{k}\sum_{j=0}^{\infty}\left( 
\left\vert K_{a,b}\right\vert ^{k}a\right)  ^{j}. 
\end{align*} 
The lemma will be proved if we can show that $f\left(  a\right)  :=\left\vert 
K_{a,b}\right\vert ^{k}a<1$ for some $a>1$. We have $f\left(  1\right)  =0$ 
and $f^{\prime}\left(  1\right)  =k\left(  1-b\right)  -1$. This last quantity 
is strictly positive for all $k\geq2$ as soon as $b<1/2$. This finishes the 
proof of the lemma \ref{deltauuk}.\hfill$\square$\vspace{0.15in} 
 
\emph{Step 3: the term }$r_{12}$. We have 
\[ 
r_{12}=\Lambda_{21}\left[  11\right]  \frac{-K\left(  u,v\right)  }%
{\sqrt{K\left(  u,u\right)  \Delta\left(  u,v\right)  }}+\Lambda_{21}\left[ 
12\right]  \frac{\sqrt{K\left(  u,u\right)  }}{\sqrt{\Delta\left(  u,v\right) 
}}. 
\] 
We saw in the previous step that $\left\vert \Lambda_{21}\left[  11\right] 
\right\vert =\left\vert Q_{u+\varepsilon}-Q_{u}\right\vert \leq cst\cdot 
\varepsilon\delta^{2}\left(  u\right)  /u$. For $\Lambda_{21}\left[ 
12\right]  $, using the hypotheses on our increasing and concave functions, we 
calculate%
\begin{align} 
\left\vert \Lambda_{21}\left[  12\right]  \right\vert  &  =\left\vert 2\left( 
Q_{u+\varepsilon}-Q_{u}\right)  +\delta^{2}\left(  u+\varepsilon 
,v+\varepsilon\right)  -\delta^{2}\left(  u,v+\varepsilon\right)  +\delta 
^{2}\left(  u+\varepsilon,v\right)  -\delta^{2}\left(  u,v\right)  \right\vert 
\nonumber\\ 
&  \leq2\left\vert \Lambda_{21}\left[  11\right]  \right\vert +\varepsilon 
\delta^{2}\left(  u+\varepsilon,v+\varepsilon\right)  /\left(  v-u\right) 
+\varepsilon\delta^{2}\left(  u+\varepsilon,v\right)  /\left(  v-u-\varepsilon 
\right) \nonumber\\ 
&  \leq2\left\vert \Lambda_{21}\left[  11\right]  \right\vert +\varepsilon 
\delta^{2}\left(  v-u\right)  /\left(  v-u\right)  +\varepsilon\delta 
^{2}\left(  v-u-\varepsilon\right)  /\left(  v-u-\varepsilon\right) 
\nonumber\\ 
&  \leq2cst\cdot\varepsilon\delta^{2}\left(  u\right)  /u+2\varepsilon 
\delta^{2}\left(  v-u-\varepsilon\right)  /\left(  v-u-\varepsilon\right)  . 
\label{Lambda12}%
\end{align} 
The presence of the term $-\varepsilon$ in the last expression above is 
slightly aggravating, and one would like to dispose of it. However, since 
$\left(  u,v\right)  \in D_{\varepsilon}$, we have $v-u>\varepsilon^{\rho}$ 
for some $\rho\in\left(  0,1\right)  $. Therefore $v-u-\varepsilon 
>\varepsilon^{\rho}-\varepsilon>\varepsilon^{\rho/2}$ for $\varepsilon$ small 
enough. Hence by using $\rho/2$ instead of $\rho$ in the definition of 
$D_{\varepsilon}$ in the current calculation, we can ignore the term 
$-\varepsilon$ in the last displayed line above. Together with items 1, 2, and 
3 above which enable us to control the terms $K$ and $\Delta$ in $r_{12}$, we 
now have%
\begin{align*} 
\left\vert r_{12}\right\vert  &  \leq cst\cdot\varepsilon\frac{\delta 
^{2}\left(  u\right)  }{u}\left(  \frac{\delta\left(  u\right)  \delta\left( 
v\right)  }{\delta\left(  u\right)  \delta\left(  u\right)  \delta\left( 
v-u\right)  }+\frac{\delta\left(  u\right)  }{\delta\left(  u\right) 
\delta\left(  v-u\right)  }\right) \\ 
&  +cst\cdot\varepsilon\frac{\delta^{2}\left(  v-u\right)  }{v-u}\frac 
{\delta\left(  u\right)  }{\delta\left(  u\right)  \delta\left(  v-u\right) 
}\\ 
&  =cst\cdot\varepsilon~\left(  \frac{\delta\left(  u\right)  \delta\left( 
v\right)  }{u\delta\left(  v-u\right)  }+\frac{\delta^{2}\left(  u\right) 
}{u\delta\left(  v-u\right)  }+\frac{\delta\left(  v-u\right)  }{v-u}\right) 
. 
\end{align*} 
We may thus write%
\[ 
\iint_{D_{\varepsilon}}\left\vert r_{12}\right\vert ^{k}dudv\leq 
cst\cdot\varepsilon^{k}~\iint_{D_{\varepsilon}}\left(  \left\vert \frac 
{\delta\left(  u\right)  \delta\left(  v\right)  }{u\delta\left(  v-u\right) 
}\right\vert ^{k}+\left\vert \frac{\delta^{2}\left(  u\right)  }%
{u\delta\left(  v-u\right)  }\right\vert ^{k}+\left\vert \frac{\delta\left( 
v-u\right)  }{v-u}\right\vert ^{k}\right)  dudv. 
\] 
The last term $\iint_{D_{\varepsilon}}\left\vert \frac{\delta\left( 
v-u\right)  }{v-u}\right\vert ^{k}dudv$ is identical, after a trivial change 
of variables, to the one dealt with in Step 2. Since $\delta$ is increasing, 
second the term $\iint_{D_{\varepsilon}}\left\vert \frac{\delta^{2}\left( 
u\right)  }{u\delta\left(  v-u\right)  }\right\vert ^{k}dudv$ is smaller than 
the first term $\iint_{D_{\varepsilon}}\left\vert \frac{\delta\left( 
u\right)  \delta\left(  v\right)  }{u\delta\left(  v-u\right)  }\right\vert 
^{k}dudv$. Thus we only need to deal with that first term; it is more delicate 
than what we estimated in Step 2. 
 
We separate the integral over $u$ at the intermediate point $v/2$. When 
$u\in\lbrack v/2,v-\varepsilon]$, we use the estimate%
\[ 
\frac{\delta\left(  u\right)  }{u}\leq\frac{\delta\left(  v/2\right)  }%
{v/2}\leq2\frac{\delta\left(  v\right)  }{v}. 
\] 
On the other hand when $u\in\lbrack\varepsilon,v/2]$ we simply bound 
$1/\delta\left(  v-u\right)  $ by $1/\delta\left(  v/2\right)  $. Thus%
\begin{align*} 
&  \iint_{D_{\varepsilon}}\left\vert \frac{\delta\left(  u\right) 
\delta\left(  v\right)  }{u\delta\left(  v-u\right)  }\right\vert 
^{k}dudv=\int_{v=2\varepsilon}^{1}dv\int_{u=\varepsilon}^{v/2}\left\vert 
\frac{\delta\left(  u\right)  \delta\left(  v\right)  }{u\delta\left( 
v-u\right)  }\right\vert ^{k}du+\int_{v=\varepsilon}^{1}dv\int_{u=v/2}%
^{v-\varepsilon}\left\vert \frac{\delta\left(  u\right)  \delta\left( 
v\right)  }{u\delta\left(  v-u\right)  }\right\vert ^{k}du\\ 
&  \leq\int_{v=2\varepsilon}^{1}dv\left\vert \frac{\delta\left(  v\right) 
}{\delta\left(  v/2\right)  }\right\vert ^{k}\int_{u=\varepsilon}%
^{v/2}\left\vert \frac{\delta\left(  u\right)  }{u}\right\vert ^{k}%
du+2\int_{v=\varepsilon}^{1}\left\vert \frac{\delta^{2}\left(  v\right)  }%
{v}\right\vert ^{k}dv\int_{u=v/2}^{v-\varepsilon}\left\vert \frac{1}%
{\delta\left(  v-u\right)  }\right\vert ^{k}du\\ 
&  \leq2^{k}\int_{u=\varepsilon}^{1}\left\vert \frac{\delta\left(  u\right) 
}{u}\right\vert ^{k}du+2\frac{1}{\delta^{k}\left(  \varepsilon\right)  }%
\int_{v=\varepsilon}^{1}v^{k}\left\vert \frac{\delta\left(  v\right)  }%
{v}\right\vert ^{2k}dv\\ 
&  \leq cst\cdot\varepsilon\left(  \frac{\delta\left(  \varepsilon\right) 
}{\varepsilon}\right)  ^{k}; 
\end{align*} 
here we used the concavity of $\delta$ to imply that $\delta\left(  v\right) 
/\delta\left(  v/2\right)  \leq2$, and to obtain the last line, we used Lemma 
\ref{deltauuk} for the first term in the previous line, and we used the fact 
that $\delta$ is increasing and that $v\leq1$, together again with Lemma 
\ref{deltauuk} for the second term in the previous line. This finishes the 
proof of (\ref{intrijk}) for $r_{12}.$\vspace{0.15in} 
 
\emph{Step 4: the term }$r_{21}$. We have%
\[ 
r_{21}=\Lambda_{21}\left[  21\right]  \frac{1}{\sqrt{K\left(  u,u\right)  }}%
\] 
and similarly to the previous step,%
\begin{align*} 
\left\vert \Lambda_{21}\left[  21\right]  \right\vert  &  =\left\vert Q\left( 
u+\varepsilon,v+\varepsilon\right)  -Q\left(  u+\varepsilon,v\right) 
+Q\left(  u,v+\varepsilon\right)  -Q\left(  u,v\right)  \right\vert \\ 
&  =\left\vert 2\left(  Q_{v+\varepsilon}-Q_{v}\right)  +\delta^{2}\left( 
u+\varepsilon,v\right)  -\delta^{2}\left(  u+\varepsilon,v+\varepsilon\right) 
+\delta^{2}\left(  u,v\right)  -\delta^{2}\left(  u,v+\varepsilon\right) 
\right\vert \\ 
&  \leq2\left\vert \Lambda_{21}\left[  11\right]  \right\vert +\varepsilon 
\frac{\delta^{2}\left(  u+\varepsilon,v\right)  }{v-u-\varepsilon}%
+\varepsilon\frac{\delta^{2}\left(  u,v\right)  }{v-u}\\ 
&  \leq2cst\cdot\varepsilon\delta^{2}\left(  u\right)  /u+4\varepsilon 
\delta^{2}\left(  v-u\right)  /\left(  v-u\right)  , 
\end{align*} 
which is the same expression as in (\ref{Lambda12}). Hence with the lower 
bound of Item 1 on $K\left(  u,u\right)  $ we have%
\begin{align*} 
\iint_{D_{\varepsilon}}\left\vert r_{21}\right\vert ^{k}dudv  &  \leq 
cst\cdot\varepsilon^{k}~\iint_{D_{\varepsilon}}\left(  \left\vert \frac 
{\delta\left(  u\right)  }{u}\right\vert ^{k}+\left\vert \frac{\delta 
^{2}\left(  v-u\right)  }{\left(  v-u\right)  \delta\left(  u\right) 
}\right\vert ^{k}\right)  dudv\\ 
&  =cst\cdot\varepsilon^{k}~\iint_{D_{\varepsilon}}\left(  \left\vert 
\frac{\delta\left(  u\right)  }{u}\right\vert ^{k}+\left\vert \frac{\delta 
^{2}\left(  u\right)  }{u\delta\left(  v-u\right)  }\right\vert ^{k}\right) 
dudv. 
\end{align*} 
This is bounded above by the expression obtained as an upper bound in Step 3 
for $\iint_{D_{\varepsilon}}\left\vert r_{12}\right\vert ^{k}dudv$, which 
finishes the proof of (\ref{intrijk}) for $r_{21}.$\vspace{0.15in} 
 
\emph{Step 5: the term }$r_{22}$. Here we have%
\[ 
r_{22}=\Lambda_{21}\left[  21\right]  \frac{-K\left(  u,v\right)  }%
{\sqrt{K\left(  u,u\right)  \Delta\left(  u,v\right)  }}+\Lambda_{21}\left[ 
22\right]  \frac{\sqrt{K\left(  u,u\right)  }}{\sqrt{\Delta\left(  u,v\right) 
}}. 
\] 
We have already seen in the previous step that 
\[ 
\left\vert \Lambda_{21}\left[  21\right]  \right\vert \leq cst\cdot 
\varepsilon\left(  \frac{\delta^{2}\left(  u\right)  }{u}+\frac{\delta 
^{2}\left(  v-u\right)  }{v-u}\right)  . 
\] 
Moreover, we have, as in Step 2,%
\[ 
\left\vert \Lambda_{21}\left[  22\right]  \right\vert =\left\vert 
Q_{v+\varepsilon}-Q_{v}\right\vert \leq cst\cdot\varepsilon\frac{\delta 
^{2}\left(  v\right)  }{v}. 
\] 
Thus using the bounds in items 1, 2, and 3,%
\begin{align*} 
\left\vert r_{22}\right\vert  &  \leq cst\cdot\varepsilon\left[  \left( 
\frac{\delta^{2}\left(  u\right)  }{u}+\frac{\delta^{2}\left(  v-u\right) 
}{v-u}\right)  \frac{\delta\left(  u\right)  \delta\left(  v\right)  }%
{\delta^{2}\left(  u\right)  \delta\left(  v-u\right)  }+\frac{\delta 
^{2}\left(  v\right)  }{v}\frac{\delta\left(  u\right)  }{\delta\left( 
u\right)  \delta\left(  v-u\right)  }\right] \\ 
&  =cst\cdot\varepsilon\left[  \frac{\delta\left(  u\right)  \delta\left( 
v\right)  }{u\delta\left(  v-u\right)  }+\frac{\delta\left(  v\right) 
\delta\left(  v-u\right)  }{\delta\left(  u\right)  \left(  v-u\right) 
}+\frac{\delta^{2}\left(  v\right)  }{v\delta\left(  v-u\right)  }\right]  . 
\end{align*} 
Of the last three terms, the first term was already treated in Step 3, the 
second is, up to a change of variable, identical to the first, and the third 
is smaller than $\frac{\delta^{2}\left(  u\right)  }{u\delta\left( 
v-u\right)  }$ which was also treated in Step 3. Thus (\ref{intrijk}) is 
proved for $r_{22}$, which finishes the entire proof of Lemma \ref{Lemma53}. 
 
\end{proof}

\end{document}